\definecolor{refkey}{rgb}{1,0,0} 
\definecolor{labelkey}{rgb}{0,0,1}
\newcommand{\al}{\alpha}
\newcommand{\g}{\gamma}
\newcommand{\de}{\delta}
\newcommand{\e}{\varepsilon}
\newcommand{\tht}{\theta}
\newcommand{\ka}{\kappa}
\newcommand{\la}{\lambda}
\newcommand{\n}{\nu}
\newcommand{\s}{\sigma}
\newcommand{\f}{\phi}
\newcommand{\om}{\omega}
\newcommand{\G}{\Gamma}
\newcommand{\D}{\Delta}
\newcommand{\Ps}{\Psi}
\newcommand{\Om}{\Omega}
\newcommand{\bbN}{\mathbb N}
\newcommand{\bbR}{\mathbb R} 
\newcommand{\bbP}{\mathbb P}
\newcommand{\bbE}{\mathbb E}
\newcommand{\1}{\mathbbm{1}}
\newcommand{\cal}{\mathcal}
\newcommand{\cB}{\cal B}
\newcommand{\cC}{\cal C}
\newcommand{\cD}{\cal D}
\newcommand{\cE}{\cal E}
\newcommand{\cF}{\cal F}
\newcommand{\cH}{\cal H}
\newcommand{\cM}{\cal M}
\newcommand{\cN}{\cal N}
\newcommand{\calc}{\mathscr}
\newcommand{\ccE}{\calc E}
\newcommand{\ccF}{\calc F}
\newcommand{\ccN}{\calc N}
\newcommand{\ccP}{\calc P}
\newcommand{\ccS}{\calc S}
\newcommand{\wt}{\widetilde}
\newcommand{\wh}{\widehat}
\newtheorem{theorem}{Theorem}[section]
\newtheorem{lemma}[theorem]{Lemma}
\newtheorem{proposition}[theorem]{Proposition}
\newtheorem{corollary}[theorem]{Corollary}
\theoremstyle{definition}
\newtheorem{definition}[theorem]{Definition}
\newtheorem{assumption}[theorem]{Assumptions}
\theoremstyle{remark}
\newtheorem{remark}{Remark}
\newcommand{\be}{\begin{equation}}
\newcommand{\ee}{\end{equation}}
\newcommand{\bea}{\be\begin{aligned}}
\newcommand{\eea}{\end{aligned}\ee}
\newcommand{\bal}{\begin{aligned}}
\newcommand{\eal}{\end{aligned}}
\newcommand \ba {\begin{array}}
\newcommand \ea {\end{array}}
\newcommand{\abs}[1]{\left\lvert{#1}\right\rvert}
\newcommand{\norm}[1]{\left\lVert{#1}\right\rVert}
\newcommand{\bra}[1]{\left\langle{#1}\right\rangle}
\newcommand{\floor}[1]{\left\lfloor{#1}\right\rfloor}
\newcommand{\pare}[1]{\left({#1}\right)}
\newcommand{\cro}[1]{\left[{#1}\right]}
\newcommand{\acc}[1]{\left\{{#1}\right\}}
\newcommand{\lawconv}[2]{\xrightarrow[#1\to#2]{\mathscr L}}
\newcommand{\conv}[2]{\xrightarrow[#1\to#2]{}}
\newcommand\frechet[3]{\frac{\de^{#3}#1}{{\de #2}^{#3}}}
\newcommand\grad{\nabla}
\newcommand\del{\partial}
\newcommand\dint{\mathrm{d}}
\newcommand\dx{\dint x}
\newcommand\dy{\dint y}
\newcommand\ds{\dint s}
\newcommand\dt{\dint t}
\newcommand\argmax{\mathop{\mathrm{argmax}}}
\newcommand\capa[1]{\mathrm{cap}\pare{#1}}
\newcommand\Det{\mathrm{Det}}
\newcommand{\seg}{\geqslant}
\newcommand{\ieg}{\leqslant}
\numberwithin{equation}{section}
\begin{document}

\title[Sharp metastable asymptotics for one dimensional SPDEs]{
Sharp asymptotics of metastable transition times for one dimensional SPDEs
}
\author{Florent Barret}
\address{CMAP UMR 7641, \'Ecole Polytechnique CNRS, Route de Saclay,
91128 Palaiseau Cedex, France}
\email{barret@cmap.polytechnique.fr}

\begin{abstract}
We consider a class of parabolic semi-linear stochastic partial differential equations driven by space-time white noise on a compact space interval. 
Our aim is to obtain precise asymptotics of the transition times between metastable states. A version of the so-called Eyring-Kramers Formula is proven in an infinite dimensional setting. The proof is based on a spatial finite difference discretization of the stochastic partial differential equation. The expected transition time is computed for the finite dimensional approximation and controlled uniformly in the dimension. 
\end{abstract}

\subjclass[2010]{82C44; 60H15, 35K57.}
\keywords{Metastability, metastable transition time, parabolic stochastic partial differential equations, reaction-diffusion equations, stochastic Allen-Cahn equations, Eyring-Kramers formula.}
\date{\today}

\maketitle

\section{Introduction}\label{sec:intro}

Metastability is a phenomenon which concerns systems with several stable states. Due to perturbations (either deterministic or stochastic) the system undergoes a shift of regime and reaches a new stable state (see e.g. \cite{cgov84} by Cassandro, Galves, Olivieri and Vares, the book \cite{olivieri.vares05} by Olivieri and Vares and the lecture notes \cite{bovier09} by Bovier). Typical examples of metastable behavior can be found in chemistry, physics (for models of phase transition) and ecology. 

In this article, our aim is to understand metastability for a class of stochastic partial differential equations. We consider the Allen-Cahn (or Ginzburg-Landau) model which represents the behavior of an elastic string in a viscous stochastic environment submitted to a potential (see e.g. Funaki \cite{funaki83}). This model has other interpretations in quantum field theory (see \cite{faris.jonalasinio82, cop86} and the references therein) and in statistical mechanics as a reaction diffusion equation modeling phase transitions and evolution of interfaces (see Brassesco and Butt\`a \cite{brassesco91, brassesco.butta98}).
 
More precisely, we deal with the following equation, for $(x,t)\in[0,1]\times\bbR^+$
\be\label{eq:spde.1}
\del_t u(x,t)=\g\del_{xx}u(x,t)-V'(u(x,t))+\sqrt{2\e}W
\ee
where $\g>0$. $W$ is a space-time white noise on $[0,1]\times\bbR^+$ in the sense of Walsh \cite{walsh} and $\e>0$ is the intensity of the noise. $V$ is a smooth real valued function on $\bbR$ called a local potential.
We consider two boundary conditions: Dirichlet boundary conditions (for all $t\in\bbR^+$, $u(0,t)=u(1,t)=0$) and Neumann boundary conditions ($\del_xu(0,t)=\del_xu(1,t)=0$).
The initial condition is given by a continuous function $u_0$ which satisfies the given boundary conditions. Existence and uniqueness of an H\"older-continuous solution in the mild sense have been proved by Gy\"ongy and Pardoux in \cite{gyongy.pardoux93}.

Faris and Jona-Lasinio in \cite{faris.jonalasinio82} are among the first ones to analyze Equation \eqref{eq:spde.1} for a double well potential
\be\label{eq:allen.cahn.1}
V(x)=\frac{x^4}4-\frac{x^2}2.
\ee
In this case, $V$ has only two minima which are $+1$ and $-1$. One expects that the model \eqref{eq:spde.1} has several stable states and that a metastable behavior occurs.
The authors introduced a functional potential $S$ and interpreted \eqref{eq:spde.1} as the stochastic perturbation of an infinite dimensional gradient system:
\be\label{eq:spde.2}
\del_tu=-\frechet{S}{\phi}{}+\sqrt{2\e}W
\ee
where for $\phi$ a differentiable function,
\be\label{eq:potential.1}
S(\phi)=\int_0^1\frac{\g}2\abs{\phi'(x)}^2+V(\phi(x))\dx.
\ee
$S$ represents the free energy. $\frechet{S}{\phi}{}$ is the Fr\'echet derivative of $S$ i.e. the infinite dimensional gradient of $S$.

For more general functions $V$ (real valued $C^3$ functions), we can define a similar potential $S$ as in \eqref{eq:potential.1} which determines a potential landscape. Under the stochastic perturbation, this potential landscape is explored by the process $u$ defined in \eqref{eq:spde.1}. 
While the system without noise (i.e. $\e=0$) has several stable fixpoints (which are the minima of $S$), for $\e>0$ transitions between these fixpoints will occur at a suitable timescale. The transition paths go through the lowest saddle points.  Thus, minima and saddle points of $S$ have a key role to understand metastability but it is often a hard task, given a potential $V$ (and thus $S$), to completely compute and comprehend the geometrical structure of the energy landscape. However, some elegant method exists (see e.g. \cite{fiedler.rocha96, wolfrum02}).

The model \eqref{eq:spde.2} is an infinite dimensional generalization of the finite dimensional systems investigated by Freidlin and Wentzell \cite{freidlin.wentzell84} and by Bovier, Eckhoff, Gayrard and Klein in \cite{bovier04, bovier05}. Moreover, we will see that \eqref{eq:spde.1} is rigorously the limit of a gradient finite dimensional system (via a spatial finite difference approximation).

\bigskip

Our aim is to derive precise asymptotics of the expected transition time i.e. the time needed, starting from a minimum $\phi_0$ of $S$, to hit a set of lower minima. 
We define the hitting time $\tau_\e(B)$ by 
$\tau_\e(B)=\inf\acc{t>0,u(t)\in B}$
where $B$ is a disjoint union of small ball around some minima of $S$ lower than $\phi_0$. We prove that the expected time, $\bbE_{\phi_0}[\tau_{\e}(B)]$, has a very distinctive form known as the Arrhenius equation (Theorem \ref{th:main}). This expectation reads
\be\label{eq:arrhenius}
\bbE_{\phi_0}[\tau_{\e}(B)]= Ae^{E/\e}(1+O(\sqrt{\e}\abs{\ln(\e)}^{3/2}))\quad(\e\to0)
\ee
where $E$ is the activation energy and $A$ is the prefactor. $E$ has been computed by Faris and Jona-Lasinio for the double well potential \eqref{eq:allen.cahn.1} using a large deviation approach (Theorem 1.1 \cite{faris.jonalasinio82}). $E$ is exactly the minimum height of potential that a pathway has to overcome to reach $B$ starting from $\phi_0$. The prefactor $A$ is a constant (for our set of hypotheses) and depends only on the local geometry of the potential $S$ near the minimum $\phi_0$ and near the passes (or saddle points) from $\phi_0$ to the set $B$. The order $O(\sqrt{\e}\abs{\ln(\e)}^{3/2})$ of the error term comes directly from the local approximation of the potential $S$ by its quadratic part. 

For the double well potential \eqref{eq:allen.cahn.1} with Neumann boundary conditions, Faris and Jona-Lasinio proved that $S$ has only two global minima, denoted $m$ and $-m$ (corresponding roughly to the constant functions $1$ and $-1$ resp.). For some $\g$, this model has a unique saddle point $\s=0$ (the constant function $0$). We deduce from Theorem \ref{th:main} that $\bbE_{-m}[\tau_{\e}(B^+)]$, for a small ball $B^+$ in the suitable norm around $m$, takes the form \eqref{eq:arrhenius} with $E=S(\s)-S(-m)$ and 
\be\label{eq:eyring-kramers.0}
A=\frac{2\pi}{\abs{\la^-(\s)}} \sqrt{\prod_{k=1}^{+\infty}\frac{\abs{\la_k(\s)}}{\abs{\la_k(-m)}}}
\ee
where $(\la_k(\phi))_{k\seg 1}$ are the eigenvalues of the second Fr\'echet derivative of the potential $S$ at a point $\phi$ and $\la^-(\s)$ is the unique negative eigenvalue at the saddle point $\s$. Using asymptotic expansion of the eigenvalues, we prove that the infinite product converges. It is exactly the equivalent for an operator of the classical determinant of a matrix. We also mention the fact that this infinite product has a nice expression in terms of solutions of linear differential equations (see e.g. Levit and Smilansky \cite{levit.smilansky77}).  

\bigskip

Eyring in \cite{eyring35} and particularly Kramers in \cite{kramers40} investigate the case of a one dimensional diffusion as a model for chemical reaction rates and express rates instead of expectations. Their formula is known as the Eyring-Kramers Formula. It takes the form \eqref{eq:arrhenius} with the prefactor given by  a formula similar to \eqref{eq:eyring-kramers.0} but with a single factor in the product (there is only one eigenvalue). 

Similar Eyring-Kramers Formulas exist through a wide range of reversible Markovian models from Markov chains, stochastic differential equations. 
For finite dimensional diffusions, Freidlin and Wentzell in \cite{freidlin.wentzell84}, proving that these systems obey a large deviation principle, obtained the activation energy in terms of the rate function. In recent years, the potential theory approach initiated by Bovier, Eckhoff, Gayrard and Klein in \cite{bovier04, bovier05} has allowed to give very precise results and led to a proof of the Eyring-Kramers Formula for gradient drift diffusions in finite dimension. Moreover, the potential approach originate from Markov chains (see \cite{begk1, begk2, bovier09}) and have been refined to obtain metastable transition times for specific models (see e.g. \cite{BBI08, bdhs10}).

Formula \eqref{eq:eyring-kramers.0} is then the extension of the Eyring-Kramers Formula to a class of one-dimensional SPDEs \eqref{eq:spde.1}.
Maier and Stein in \cite{maier.stein01} obtained heuristically this formula and Vanden-Eijnden and Westdickenberg in \cite{vde.west08} used it to compute nucleation probability.
\bigskip

Specifically, the system \eqref{eq:spde.1} and its metastable behavior have been studied for at least thirty years using mainly large deviation principle and comparison estimates between the deterministic process (\eqref{eq:spde.1} with $\e=0$) and the stochastic process defined by \eqref{eq:spde.1}.
Cassandro, Olivieri, Picco \cite{cop86} obtained similar asymptotics as Faris and Jona-Lasinio \cite{faris.jonalasinio82} when the size of the space interval is not fixed and goes to infinity as $\e$ goes to $0$ sufficiently slowly. These results first prove the existence of a suitable exponential timescale in which the process undergoes a transition.

In the same case as \eqref{eq:allen.cahn.1}, Martinelli, Olivieri and Scoppola \cite{martinelli89} obtain the asymptotic exponentiality of the transition times (Theorem 4.1 \cite{martinelli89}).
Also, Brassesco \cite{brassesco91} proves that the trajectories of this system exhibit characteristics of a metastable behavior: the escape from the basin of attraction of the minimum $-m$ occurs through the lowest saddle points (Theorem 2.1 \cite{brassesco91}) and the process starting from $-m$ spends most of its time before the transition near $-m$ (Theorem 2.2 \cite{brassesco91}).

\bigskip

In this paper, we consider a local potential $V$ (satisfying Assumptions \ref{hyp:V} and \ref{hyp:S}) and we rigorously prove an infinite dimensional version of the Eyring-Kramers Formula.  Our method relies on a spatial finite difference approximation of Equation \eqref{eq:spde.1} introduced by Berglund, Fernandez and Gentz in \cite{berglund107, berglund207} as a model of coupled particles submitted to a potential. The computation of the expected transition time for the approximated system gives us the prefactor, the activation energy and some error terms. We need to control the behavior of these error terms as the step of discretization goes to $0$ (or equivalently as the dimension $N$ of the approximated system goes to $+\infty$). To this aim, we adapt results from \cite{bbm10} by Bovier, M\'el\'eard and the author.

As proved by Funaki \cite{funaki83} and Gy\"ongy \cite{gyongy98}, the solution of the approximated system converges to the solution of the SPDE.
By combining different results from SPDE theory, large deviation theory (from Chenal and Millet \cite{chenal.millet97}) and Sturm-Liouville theory we are able to take the limit of the finite dimensional model in order to retrieve the SPDE \eqref{eq:spde.1}. We also need to adapt estimates on the loss of the memory of the initial condition (from Martinelli, Scoppola and Sbano \cite{martinelli89,sbano94}) uniformly in the dimension.


%

The use of spatial finite difference approximation is quite natural since we consider our SPDEs in the sense of Walsh \cite{walsh}, limited to the case of space-time white noise. Other approximations could be possible, notably the Galerkin approximation should lead to similar results for a different class of SPDEs  in the framework of Da Prato and Zabczyk (see the book \cite{daprato.zabczyk}). 

\bigskip

The article is organized as follows. In Section \ref{sec:results}, we present the equation, the assumptions, the main theorem (Theorem \ref{th:main}) and a sketch of its proof. Then in Section \ref{sec:conv}, we adapt the convergence of the approximations and prove convergence of the approximated transition times. In Section \ref{sec:initial}, we state large deviations estimates by Chenal and Millet \cite{chenal.millet97}, contraction results by Martinelli, Olivieri, Scoppola and Sbano \cite{martinelli89, sbano94} and prove a uniform control in the initial condition uniformly in the dimension. In Section \ref{sec:prefactor}, we recall results about eigenvalues and eigenvectors of Sturm-Liouville problems and prove the convergence of the prefactor. In the last section, we compute  the expected transition times uniformly in the dimension.

We will use the following notations henceforth.
For a functional space $\cC$, equipped with a norm $\norm{\cdot}_{\cC}$, we denote by $\cC_{bc}$ the closed subspace in the $\cC$ topology of the functions in $\cC$ satisfying the suitable boundary conditions (Dirichlet or Neumann). 
For $f\in L^{\infty}([0,1]\times [0,T])$ we set the norm of this space $\norm{f}_{\infty,T}$ or simply $\norm{f}_{\infty}$ when $T=+\infty$.

\bigskip

\noindent\textbf{Acknowledgments.} I am very grateful to Anton Bovier and 
Sylvie M\'el\'eard for suggesting this  topic and for 
constant help and advice. 
I am indebted to the Hausdorff Center for Mathematics  Bonn for financial 
support of numerous visits to Bonn. Part of the work in this paper has been 
realized at the Technion in Haifa at the invitation of Dima Ioffe whom 
I thank for his kind hospitality. 
The research on this project was supported by ANR MANEGE.

\section{Results}\label{sec:results}

\subsection{The Equation}\label{ssec:equation}

The assumptions are of two kinds: some on the local potential $V$, others on the functional potential $S$. We first start with the hypotheses on $V$.
\begin{assumption}\label{hyp:V}
We suppose that:
\begin{itemize}
\item $V$ is $C^3$ on $\bbR$.
\item $V$ is convex at infinity: there exist $R,c>0$ such that for $\abs{u}>R$
\be\label{eq:V.1}
V''(u)>c>0.
\ee
\item $V$ grows at infinity at most polynomially: there exist $p,C>0$ such that
\be\label{eq:V.2}
V(u)<C(1+\abs{u}^p).
\ee
\end{itemize}
\end{assumption}
These hypotheses are made to avoid complications for the definition of the solution $u$ of \eqref{eq:spde.1} and to allow the computations of the derivatives of $S$.

Let $(\Om,\ccF,\bbP)$ be a probability space on which we define a space-time white noise $W$ as defined in \cite{walsh} equipped with a filtration $(\cF_t)_{t\seg0}$ with the usual properties. The integrable processes for the white noise are the predictable measurable processes in $L^2(\Om\times\bbR_+\times[0,1])$. We denote by $g_t(x,y)$ the density  of the semi-group generated by $\g\del_{xx}$ on $[0,1]$ with the suitable boundary conditions.

Let us recall that a random field $u$ is a mild solution of \eqref{eq:spde.1} if
\begin{enumerate}
\item $u$ is almost surely continuous on $[0,1]\times\bbR^+$ and predictable
\item for all $(x,t)\in[0,1]\times\bbR^+$
\begin{align}\label{eq:mild.3}\nonumber
u(x,t)&=\int_0^1g_t(x,y)u_0(y)\dy-\int_0^t\int_0^1g_{t-s}(x,y)V'(u(y,s))\dy\ds\\
&+\sqrt{2\e}\int_0^t\int_0^1g_{t-s}(x,y)W(\dy,\ds).
\end{align}
\end{enumerate}

We state from \cite{gyongy.pardoux93} the following result on the existence, uniqueness and regularity of the solution.
\begin{proposition}[\cite{gyongy.pardoux93}]\label{prop:sol}
For every initial condition $u_0\in C_{bc}([0,1])$, the stochastic partial differential equation \eqref{eq:spde.1} has a unique mild solution.
Moreover for all $T>0$ and $p\seg1$,
\be\label{eq:moment.1}
\bbE\left[\sup_{[0,T]\times[0,1]}\abs{u(x,t)}^p\right]\ieg C(T,p).
\ee
The random field $u$ is essentially $\frac12$-H\"older in space and $\frac14$-H\"older in time.
\end{proposition}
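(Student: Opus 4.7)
The plan is to construct the mild solution by a Picard iteration after a suitable truncation of the nonlinearity, use the one-sided dissipativity encoded in $V''(u)>c$ for $\abs{u}>R$ to obtain moment bounds uniform in the truncation level, and finally derive H\"older regularity from Kolmogorov's continuity criterion applied to each of the three terms in the mild formulation \eqref{eq:mild.3}.

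First, for fixed $K>0$, I would introduce a globally Lipschitz modification $V'_K$ of $V'$ by smoothly truncating $V'$ outside $[-K,K]$, and study the associated truncated mild equation. The map
\be
\Phi(u)(x,t)=\int_0^1 g_t(x,y)u_0(y)\dy-\int_0^t\int_0^1 g_{t-s}(x,y)V'_K(u(y,s))\dy\ds+\sqrt{2\e}\int_0^t\int_0^1 g_{t-s}(x,y)W(\dy,\ds)
\ee
is then a contraction on the Banach space of predictable random fields $u$ with finite $\bbE[\sup_{[0,T]\times[0,1]}\abs{u}^q]$ endowed with a suitably time-exponentially weighted norm: the drift is controlled by the Lipschitz constant of $V'_K$, while the stochastic convolution lies in every $L^q(\Om;C([0,T]\times[0,1]))$ thanks to the classical heat-kernel bound $\int_0^t\int_0^1 g_{t-s}(x,y)^2\dy\ds\ieg C\sqrt{t}$. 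This yields a unique mild solution $u^{(K)}$ on any $[0,T]$.

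To remove the truncation, I would establish \eqref{eq:moment.1} uniformly in $K$. The stochastic convolution $Z(x,t)=\sqrt{2\e}\int_0^t\int_0^1 g_{t-s}(x,y)W(\dy,\ds)$ is a centered Gaussian field whose $L^q$ moments and sup norm are controlled via the factorization method of Da Prato-Kwapie\'n-Zabczyk together with the above heat-kernel bound. For the difference $v^{(K)}=u^{(K)}-Z$, which solves a pathwise PDE with differentiable trajectories in $t$, Assumption \ref{hyp:V} supplies a one-sided estimate $-(V'_K(a+b)-V'_K(b))\,a\ieg La^2+C(1+\abs{b}^{p-1})\abs{a}$ with constants independent of $K$ as soon as $K>R$. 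A standard $L^q$-energy computation on $v^{(K)}$ combined with Gronwall's lemma then bounds $\sup_{[0,T]\times[0,1]}\abs{v^{(K)}}^q$ in terms of $\norm{Z}_{\infty,T}$, the initial datum, and the polynomial growth constant in \eqref{eq:V.2}, whence \eqref{eq:moment.1} uniformly in $K$. Sending $K\to+\infty$ identifies $u^{(K)}$ with the desired solution, and pathwise uniqueness follows from the same one-sided estimate applied to the difference of two solutions.

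Finally, the H\"older regularity is obtained by treating each term in \eqref{eq:mild.3} separately and invoking Kolmogorov's continuity criterion. The deterministic heat term $\int_0^1 g_t(x,y)u_0(y)\dy$ inherits $C^{1/2,1/4}$ regularity from the heat semigroup acting on a continuous boundary-compatible initial datum; the drift convolution is even smoother thanks to \eqref{eq:moment.1} applied to $V'(u)$ and standard heat-kernel H\"older estimates. The main contribution comes from the stochastic convolution, for which one uses the Gaussian-Walsh isometry together with the pointwise H\"older estimates $\abs{g_t(x,y)-g_t(x',y)}\ieg C\abs{x-x'}^{1/2}t^{-3/4}$ and their temporal analogues to show $\bbE[\abs{Z(x,t)-Z(x',t')}^{2q}]\ieg C_q(\abs{x-x'}^{q}+\abs{t-t'}^{q/2})$ for every $q\seg 1$. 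The main obstacle throughout is the polynomial unboundedness of $V'$: circumventing it requires the careful interplay between the truncation step, the one-sided dissipativity of Assumption \ref{hyp:V}, and the Gaussian moment control of $Z$, without which a naive globally-Lipschitz fixed point argument does not close.
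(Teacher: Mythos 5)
Your proposal is correct and matches the route the paper implicitly takes for this cited result: truncate the non-Lipschitz drift, use the dissipativity built into Assumption \ref{hyp:V} to obtain moment bounds uniform in the truncation level, and deduce H\"older regularity from heat-kernel estimates and Kolmogorov's criterion. Two cosmetic differences worth noting: the paper records the dissipativity as the single inequality $-xV'(x)<C$ (see \eqref{eq:drift.1}) rather than the one-sided Lipschitz form you derive from the lower bound on $V''$, and for the passage from an $L^q$-energy/Gronwall estimate on $v^{(K)}=u^{(K)}-Z$ to the pointwise bound $\sup_{x,t}\abs{v^{(K)}}$ the paper's own Lemma \ref{lem:mildN} instead invokes a pathwise comparison theorem, which is the cleaner device for $L^\infty$ control in this one-dimensional parabolic setting; both devices close the argument.
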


The only complication comes from the fact that $V'$ is not globally Lipschitz but prevents the process to go to infinity. From Assumptions \ref{hyp:V}, we have
\be\label{eq:drift.1}
-xV'(x)<C.
\ee
The proof of Proposition \ref{prop:sol} is standard and uses mainly estimates on the density $g_t(x,y)$.

\begin{remark}\label{rem:dim.one}
The definition of the stochastic convolution (the last expression of the right-hand side of \eqref{eq:mild.3}) requires the density of the semi-group to be in $L^2([0,1]\times[0,T])$ for every $T>0$. Unfortunately, that is only true in dimension one. For higher dimensions, the stochastic convolution does not define a classical function but a distribution in a Sobolev space of negative index \cite{walsh}.
\end{remark}

\subsection{Stationary Points}

As for the finite dimensional case, the minima and saddle points of $S$ play a crucial role. To this end, we first specify what is the "gradient" (or the Fr\'echet derivative) of the functional $S$. Let us recall that $S$ is defined, for $\phi\in H^1_{bc}$, by
\be
S(\phi)=\int_0^1\frac{\g}2\abs{\phi'(x)}^2+V(\phi(x))\dx.
\ee

For $\phi,h$ in $C_{bc}^2([0,1])$ we have a Taylor expansion of $S$ at the second order in $h$
\be\label{eq:potential.2}
S(\phi+h)=S(\phi)+D_{\phi}S(h)+\frac12D^2_{\phi}S(h,h)+O(\norm{h}^2_{C^2})
\ee
where $\norm{h}_{C^2}=\norm{h}_{\infty}+\norm{h'}_{\infty}+\norm{h''}_{\infty}$. By integration by parts we compute the differentials $D_{\phi}S$ and $D^2_{\phi}S$. The first order differential is a linear functional which takes the form
\be\label{eq:frechet.1}
D_{\phi}S(h)=\int_0^1[-\g \phi''(x)+V'(\phi(x))]h(x)\dx.
\ee
The Fr\'echet derivative is $\frechet{S}{\phi}{}=-\g \phi''(x)+V'(\phi(x))$. The second order derivative (the Hessian operator) takes the form
\be\label{eq:frechet.2}
D^2_{\phi}S(h,h)=\int_0^1h(x)[-\g h''(x)+V''(\phi(x))h(x)]\dx.
\ee
We denote by $\cH_{\phi}S$ the Hessian operator at $\phi$:
\be
\cH_{\phi}Sh(x)=-\g h''(x)+V''(\phi(x))h(x).
\ee
The Hessian operator is a Sturm-Liouville operator.

We say that $\phi$ is a \emph{stationary point} of $S$ if $\phi$ is solution of the non-linear differential equation
\be\label{eq:frechet.3}
\frechet{S}{\phi}{}=-\g \phi''+V'(\phi)=0.
\ee

Let us now fix two points $\phi,\psi\in C_{bc}([0,1])$ and define some quantities.
\be\label{eq:saddle.1b}
\G(\phi\to\psi)=\acc{f,f(0)=\phi,f(1)=\psi,f\in C([0,1],C_{bc}([0,1]))}
\ee
is the set of continuous paths from $\phi$ to $\psi$. For $f\in\G(\phi\to\psi)$, $\wh f$ denotes the set of maxima of the path $f$,
\be\label{eq:saddle.1c}
\wh f=\acc{f(t_0), t_0\in\argmax_{t\in[0,1]}S(f(t))}.
\ee
The saddle points are passes from a valley to another one. The definition uses this idea.

\begin{definition}[Saddles]\label{def:saddle}
For any $\phi,\psi\in C_{bc}([0,1])$, we define $\wh{S}(\phi,\psi)$, the minimum height needed to go from $\phi$ to $\psi$
\be\label{eq:saddle.1}
\wh{S}(\phi,\psi)
=\wh{S}(\psi,\phi)
=\inf\acc{S(\phi), \phi\in \wh f, f\in \G(\phi\to\psi)}.
\ee

For $\phi,\psi$ such that $\wh{S}(\phi,\psi)<\infty$, we denote $\ccS(\phi,\psi)$ the set of admissible saddles: the points which realize the maximum along a minimal pathway
\be\label{eq:saddle.2}
\ccS(\phi,\psi)=\acc{\s\in C_{bc}([0,1]),S(\s)=\wh{S}(\phi,\psi), \exists f\in\G(\phi\to\psi),  \s\in\wh f}.
\ee
\end{definition}
The set of admissible saddle points is very important to compute the prefactor of the mean transition times. Near these points the process spends the most crucial time as it passes from a basin of attraction to another one.
\smallskip

We now present the assumptions on $S$.
\begin{assumption}\label{hyp:S}
We suppose that:
\begin{itemize}
\item $S$ has a finite number of minima and saddle points.
\item All the minima and saddle points of $S$ are non-degenerate (i.e. hyperbolic): at each point, the Hessian operator has non-zero eigenvalues.
\end{itemize}
\end{assumption}

Assumptions \ref{hyp:S} are structural. The finite number of stationary points provides a simple generalization of the case where there is only one saddle point. The non-degeneracy condition is necessary in order to approximate locally at the minima and saddle points the potential by its quadratic part. If this is not the case the prefactor in \eqref{eq:arrhenius} is not a constant but should have a dependence in $\epsilon$. 

Connections between Assumptions \ref{hyp:V} and \ref{hyp:S} are not straightforward. Proving that a given potential $S$ satisfies Assumption \ref{hyp:S} is not easy, a precise analysis is often needed. Moreover if we want to investigate the dependence of the potential $S$ on the parameter $\g$, bifurcations can occur and the landscape do not satisfy Assumption \ref{hyp:S} for some critical values of $\g$. See Berglund, Fernandez and Gentz \cite{berglund107, berglund207} for the finite and infinite dimensional cases for the double well potential. However, results exist (see \cite{angenent86} and references therein) on the generality of Assumption \ref{hyp:S}. 

In addition, under Assumptions \ref{hyp:S} and \ref{hyp:V}, the deterministic dynamical system (i.e. \eqref{eq:spde.1} without the white noise) satisfies a Morse-Smale structure (see \cite{brunovsky.fiedler89, fiedler.rocha96} and the references therein). This means that the attractor of the dynamical system consists of equilibria and heteroclinic orbits connecting these equilibria. Methods has been developed by Fiedler and Rocha in \cite{fiedler.rocha96}, by Wolfrum in \cite{wolfrum02} to compute the global attractor of the deterministic system.


\begin{remark}\label{rem:potential}
$H^1$ is the convenient functional space for the process since $S(\phi)<+\infty$ if and only if $\phi$ is in $H^1([0,1])$.
In fact from the upper bound \eqref{eq:V.2} and lower bound \eqref{eq:V.1} on $V$ we get
\begin{align}\label{eq:potential.3}
C_1(\norm{\phi}^2_{H^1}-1)&\ieg S(\phi)\ieg 
C'_1(\norm{\phi}^2_{H^1}+\norm{\phi}^p_{H^1}+1).
\end{align}
Each function in $H^1([0,1])$ is continuous and even $\al$-H\"older continuous (for $0<\al<\frac12$).
\end{remark}
\smallskip

For each $\phi\in C([0,1])$, we define the quantity $\Det(\cH_{\phi} S)$:
\begin{itemize}
\item for Dirichlet boundary conditions, let $f$ be the solution on $[0,1]$ of
\begin{align}\label{eq:fdet.1}
\cH_{\phi} Sf&=0&f(0)&=1&f'(0)=0
\end{align}
then $\Det(\cH_{\phi} S)=f(1)$
\item for Neumann boundary conditions, let $f$ be the solution on $[0,1]$ of
\begin{align}\label{eq:fdet.3}
\cH_{\phi} Sf&=0&f(0)&=0&f'(0)=1
\end{align}
then 
$\Det(\cH_{\phi} S)=f'(1)$.
\end{itemize}

Let us recall that, as a regular Sturm-Liouville operator, $\cH_{\phi}S$ has a countable number of eigenvalues, all of them real. We denote by $(\la_k(\phi))_{k\seg 1}$ the sequence of these eigenvalues in the increasing order. The definition of $\Det(\cH_{\phi} S)$ is justified by the following lemma. 
\begin{lemma}[\cite{levit.smilansky77}]\label{lem:fdet}
For any $\phi$ and $\psi$ with non-degenerate Hessian operator, the infinite product $\prod_{k=1}^{\infty}\frac{\la_k(\phi)}{\la_k(\psi)}$ is convergent and
we have
\be\label{eq:fdet.5}
\prod_{k=1}^{\infty}\frac{\la_k(\phi)}{\la_k(\psi)}=\frac{\Det(\cH_{\phi} S)}{\Det(\cH_{\psi} S)}.
\ee
\end{lemma}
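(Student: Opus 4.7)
The plan is to invoke the classical shooting-function technique of Gelfand--Yaglom and Levit--Smilansky \cite{levit.smilansky77}. First, I would verify absolute convergence of the infinite product by the standard Weyl-type asymptotics for the regular Sturm--Liouville operator $\cH_\phi S=-\g\del_{xx}+V''(\phi(\cdot))$ on $[0,1]$: one has
\be
\la_k(\phi)=\g\pi^2 k^2+\int_0^1V''(\phi(x))\,\dx+O(1/k^2)
\ee
as $k\to\infty$ (with the analogous expansion in the Neumann case). This immediately yields $\la_k(\phi)/\la_k(\psi)=1+O(1/k^2)$, so $\sum_k|\log(\la_k(\phi)/\la_k(\psi))|<\infty$ and the infinite product converges absolutely.

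To link this product to the functional determinants, I would introduce the shooting function $F_\phi(\la):=f_\la(1)$ in the Dirichlet case (resp.\ $F_\phi(\la):=f_\la'(1)$ in the Neumann case), where $f_\la$ solves $(\cH_\phi S-\la)f_\la=0$ with the initial data at $x=0$ specified in the definition of $\Det(\cH_\phi S)$. Standard analytic dependence on parameters shows that $F_\phi$ is entire in $\la$, its zeros are exactly the simple eigenvalues $\acc{\la_k(\phi)}_{k\seg 1}$, and $F_\phi(0)=\Det(\cH_\phi S)$. Recasting the IVP as a Volterra integral equation and comparing with the constant-coefficient case (where the solution is an explicit trigonometric function) shows that $F_\phi$ is entire of order $1/2$ in $\la$. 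Hadamard's factorization theorem then delivers
\be
F_\phi(\la)=F_\phi(0)\prod_{k=1}^\infty\pare{1-\frac{\la}{\la_k(\phi)}}.
\ee

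To extract the desired identity, I would take the ratio $F_\phi/F_\psi$ and study the limit $\la\to-\infty$ along the negative real axis. The same Volterra analysis shows that $F_\phi(-t)$ has leading behavior $\sinh(\sqrt{t/\g})/\sqrt{t/\g}$ as $t\to+\infty$, with a multiplicative correction of the form $1+o(1)$ that is uniform in $\phi$; in particular $F_\phi(-t)/F_\psi(-t)\to 1$. On the product side, rewriting each factor as $\frac{\la_k(\psi)}{\la_k(\phi)}\cdot\frac{\la_k(\phi)+t}{\la_k(\psi)+t}$ and using Weyl's bounds to produce the summable majorant $\sum_k|\la_k(\phi)-\la_k(\psi)|/(\la_k(\psi)+t)=O(1/\sqrt t)$, dominated convergence allows passage to the limit inside the product, giving $\prod_k\la_k(\psi)/\la_k(\phi)$. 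Matching the two asymptotic values yields $\Det(\cH_\phi S)/\Det(\cH_\psi S)=\prod_k\la_k(\phi)/\la_k(\psi)$.

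The principal technical hurdle is the uniform large-$|\la|$ control of $F_\phi$ and the attendant exchange of limit and infinite product. One needs the WKB-type prefactor of $F_\phi(-t)$ to be $1+o(1)$ independently of $\phi,\psi$ (for which a careful Volterra/Gronwall argument is required), together with a summable bound for the tail of the Hadamard product uniformly in $t$; both hinge on standard but delicate Sturm--Liouville perturbation estimates underlying the Levit--Smilansky identity.
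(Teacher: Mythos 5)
Your argument is correct, but it takes a genuinely different route from the one the paper cites. The paper invokes Levit and Smilansky's elementary proof, which avoids complex analysis entirely: one writes the Green function $G_\phi(x,y)$ of $\cH_\phi S$ with the given boundary conditions in two different ways, once via the spectral decomposition $G_\phi(x,y)=\sum_k\la_k(\phi)^{-1}e_k(x)e_k(y)$ and once via a Wronskian combination of two fundamental solutions of $\cH_\phi Sf=0$, then compares the resulting formulas (for $G_\phi$ and $G_\psi$, and for the trace of their difference) to extract both convergence of the product and the identity with $\Det(\cH_\phi S)/\Det(\cH_\psi S)$. Your proof is the Gelfand--Yaglom/shooting approach: convergence via Weyl asymptotics, the shooting function $F_\phi$ realized as an entire function of order $1/2$, Hadamard factorization with constant genus-zero prefactor $F_\phi(0)=\Det(\cH_\phi S)$, and finally normalization of the ratio $F_\phi/F_\psi$ by a $\la\to-\infty$ asymptotic. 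Both are standard and both work here; the trade-off is that the Green-function route stays within elementary ODE/Sturm--Liouville theory and sidesteps the order-$1/2$ and Hadamard machinery, while your route is more modular (the Hadamard step cleanly isolates what must be checked) and generalizes more transparently, at the cost of needing uniform large-$\abs{\la}$ WKB control of $F_\phi$ and a dominated-convergence argument to pass to the limit inside the infinite product. Your outline of those last two technical points (the $1+O(1/k^2)$ ratio bound and the $O(1/\sqrt t)$ tail estimate) is sound, so the proposal is a valid alternative proof.

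One incidental remark: as stated in the paper, the initial data $(f(0),f'(0))$ in the definitions of $\Det(\cH_\phi S)$ appear to be swapped between the Dirichlet and Neumann cases (the Dirichlet shooting solution should satisfy $f(0)=0$, $f'(0)=1$ so that its zeros at $x=1$ detect Dirichlet eigenvalues, and vice versa for Neumann). Your proof implicitly relies on the zeros of $F_\phi$ being exactly the eigenvalues, so you should make sure to use the corrected initial conditions; with that fix the argument goes through.
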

This lemma relates the infinite product of the ratio of eigenvalues to a ratio of terminal values of solutions. We find an elementary proof in \cite{levit.smilansky77} by Levit and Smilansky  which relies on two different expressions of the Green function associated to the problem $\cH_{\phi}Sf=0$ satisfying the boundary conditions. In fact, the Green function could either be expressed using the spectral decomposition of $\cH_{\phi}S$ or expressed as a linear combination of two well-chosen fundamental solutions (of the second order linear differential equation).

\subsection{Main results}\label{ssec:main}

Before stating the main result, we describe the set of minima and saddle points. In fact, the prefactor depends greatly on the geometry of a graph connecting the minima to each other through the saddle points (so-called the $1$-skeleton connection graph by Fiedler and Rocha in \cite{fiedler.rocha10}). We define this graph and express the prefactor partly as an equivalent conductance on this graph.

We denote by $\cM$ the set of minima of $S$.
Since by Assumption \ref{hyp:S}, there is a finite number of stationary points, we order the minima by increasing energy. We denote by $\f_1, \f_2,\dots,\f_m$, $m=\abs{\cM}$, the different minima indexed by increasing energy
\be\label{eq:min.1}
S(\f_1)\ieg S(\f_2)\ieg\dots\ieg S(\f_m).
\ee
We denote by $\cM_l$, the subset of minima $\cM_l=\acc{\f_1, \f_2,\dots,\f_l}$ for $1\ieg l\ieg m$.

We consider the transitions from a minimum $\f_{l_0}$ to $\cM_l$ for $l<l_0$. 
These are the only visible metastable transitions. We will see from large deviations estimates, that to go from a minimum $\phi$ to another $\psi$, it requires a time of order $\exp\pare{\wh S(\phi,\psi)-S(\phi)/\e}$. The time required to make the reverse transition is also of order $\exp\pare{\wh S(\psi,\phi)-S(\psi)/\e}$.
Therefore if $S(\psi)>S(\phi)$, we get 
\be\label{eq:potential.4}
\wh S(\phi,\psi)-S(\phi)>\wh S(\psi,\phi)-S(\psi)
\ee
and the time required to go from $\phi$ to $\psi$ is much larger than for the reverse transition. So we cannot see the reverse transitions since there are absorbed by the direct ones.
If some minima have the same potential, we can suitably order them to consider a transition from one minimum to another one at a same height.

Let us now construct the weighted graph of paths from $\phi_{l_0}$ to $\cM_l$. We denote $\wh S=\wh S(\phi_{l_0},\cM_l)$ the common potential of the saddles. The minima $\cM$ are the vertices of the graph, the saddle points  in $\ccS(\phi_{l_0},\cM_l)$ are the edges. We connect an edge $\wh\s$ between two vertices  $\phi,\psi\in\cM$ if the saddle $\wh\s$ is a pass between the valleys of $\phi$ and $\psi$: there exists $f\in\G(\phi\to\psi)$ such that $\wh f$ has a unique element and $\wh f=\wh\s$. Existence of this graph is ensured by Assumptions \ref{hyp:S} (see \cite{fiedler.rocha10} and references therein).

Each saddle point in $\ccS(\phi_{l_0},\cM_l)$ has a unique negative eigenvalue from the Morse-Smale property and the hyperbolicity of the stationary points.
The weight associated to an edge $\wh\s$ is defined as
\be\label{eq:weigh.1}
w(\wh\s)=\frac{\abs{\la^-(\wh\s)}}{\sqrt{\abs{\Det \cH_{\wh\s}S}}}
\ee
where $\la^-(\wh\s)$ is the unique negative eigenvalue of $\cH_{\wh\s}S$.

$\wh\s^+$ and $\wh\s^-$ denote the two minima connected by a given edge $\wh\s$. Let us recall that we have $m$ minima in $\cM$. For a real valued vector $a\in \bbR^m$ indexed by the minima in $\cM$, we consider the following quadratic form
\be\label{eq:quadratic.1}
Q(a)=\sum_{\wh\s\in\ccS(\phi_{l_0},\cM_l)}w(\wh\s)(a(\wh\s^+)-a(\wh\s^-))^2.
\ee
We define $\cC^*(\phi_{l_0},\cM_l)$ the equivalent conductance of the graph between $\phi_{l_0}$ and $\cM_l$ as
\be\label{eq:quadratic.2}
\cC^*(\phi_{l_0},\cM_l)=\inf\acc{Q(a), a\in\bbR^m,a(\phi_{l_0})=1,a(\phi)=0, \phi\in\cM_l}.
\ee
This conductance is an approximation of the capacity between a neighborhood of $\phi_{l_0}$ and $\cM_l$. In some sense, we replace the continuous landscape defined by $S$ by a graph containing the relevant geometric structure of the landscape.

Let us denote by $\cB_{\rho}(\phi)$, for $\phi\in H^1_{bc}[0,1]$, the ball of center $\phi$ and radius $\rho$ in $H^1_{bc}$
\be\label{eq:ball.1}
\cB_{\rho}(\phi)=\acc{\s\in H^1_{bc}, \norm{\s-\phi}_{L^2}\ieg\rho, \norm{\s}_{H^1}<A_1}
\ee
where $A_1$ is a sufficiently large constant.
We also define $\cB_{\rho}(\cM_l)=\cup_{\phi\in\cM_l}\cB_{\rho}(\phi)$.
We choose this kind of neighborhood because in the following we need to control the norm in the uniform norm and in the $\al$-H\"older norm (for $\al<\frac12$).
\bigskip

We now state our main result describing the dependence in $\e$ of the mean of the hitting time of a union of balls around the points of $\cM_l$ starting from $\phi_{l_0}$.

\begin{theorem}\label{th:main}
Under the assumptions \ref{hyp:V}, \ref{hyp:S}, for any minimum $\phi_{l_0}$, and a set of minima $\cM_l$ with $l_0>l$, there exists $\rho_0$ such that for any $\rho_0>\rho>0$
\be\label{eq:time.1}
\bbE_{\phi_{l_0}}[\tau_\e(\cB_{\rho}(\cM_l))]
=\frac{2\pi e^{{\wh S(\phi_{l_0},\cM_l)}/{\e}}}{{\cC^*(\phi_{l_0},\cM_l)}\sqrt{\Det \cH_{\phi_{l_0}}S}}(1+\Psi(\e))
\ee
where the error term satisfies $\Psi(\e)=O(\sqrt{\e}\abs{\ln(\e)}^{3/2})$. 
\end{theorem}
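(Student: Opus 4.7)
The strategy indicated in the abstract and introduction is to reduce the SPDE problem to a finite-dimensional one via a spatial finite difference discretization, apply the potential-theoretic Eyring-Kramers formula of Bovier-Eckhoff-Gayrard-Klein \cite{bovier04,bovier05} to the discretized system, and then pass to the limit as the mesh size $1/N$ tends to $0$ with error bounds uniform in $\e$. Concretely, I would partition $[0,1]$ into $N$ equal subintervals and replace $\g\del_{xx}$ by the centered finite-difference Laplacian with the appropriate boundary modification: this yields a gradient SDE on $\bbR^N$ with potential $S_N$, a Riemann-sum discretization of $S$, and the white-noise scaling of \cite{berglund107,berglund207, bbm10}. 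For each fixed $N$, BEGK yields a sharp Eyring-Kramers asymptotic for $\bbE_{\phi_{l_0}^N}[\tau_\e^N]$, with prefactor built from the negative eigenvalue at each discretized saddle and the ratio of Hessian determinants at minima and saddles, assembled through the equivalent conductance $\cC^*_N$ of the $1$-skeleton graph exactly as in \eqref{eq:quadratic.2}.

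Next I would prove that the finite-dimensional quantities converge to their infinite-dimensional counterparts. Since the discretized Hessian is the finite-difference approximation of the Sturm-Liouville operator $\cH_\phi S$, standard perturbation theory gives convergence of the stationary points of $S_N$ to those of $S$ and spectral convergence of the lowest eigenvalues; in particular $\la^{-,N}(\wh\s)\to\la^-(\wh\s)$. The crucial determinant ratio $\Det(\cH_{\wh\s}^N S_N)/\Det(\cH_{\phi_{l_0}}^N S_N)$ converges to $\Det(\cH_{\wh\s}S)/\Det(\cH_{\phi_{l_0}}S)$, the limit being finite by Lemma \ref{lem:fdet}, because the a priori divergent factor $(2\pi\e)^{N/2}$ in the Gaussian volumes cancels exactly between the minimum and each saddle. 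Under Assumption \ref{hyp:S} the connection graph is finite and structurally stable for large $N$, so $\cC^*_N\to\cC^*(\phi_{l_0},\cM_l)$ and $\wh S_N\to\wh S$. This gives convergence of the full prefactor in \eqref{eq:time.1}.

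To transfer the finite-dimensional Eyring-Kramers formula to the statement of the theorem, I would combine the above prefactor convergence with two further ingredients established in Sections \ref{sec:conv} and \ref{sec:initial}. First, the mild-solution convergence $u^N\to u$ in H\"older norm (Funaki \cite{funaki83}, Gy\"ongy \cite{gyongy98}) together with asymptotic exponentiality of the hitting times and the uniform-in-$N$ loss-of-memory estimates adapted from Martinelli-Olivieri-Scoppola and Sbano \cite{martinelli89,sbano94} yield convergence of $\bbE_{\phi_{l_0}^N}[\tau_\e^N(\cB_\rho(\cM_l))]$ to $\bbE_{\phi_{l_0}}[\tau_\e(\cB_\rho(\cM_l))]$. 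Second, the large deviation estimates of Chenal and Millet \cite{chenal.millet97} ensure that the process starting at the deterministic point $\phi_{l_0}$ enters any small ball around $\phi_{l_0}$ and relaxes to the local quasi-stationary measure in time $o(e^{\wh S/\e})$, so the hitting times started from $\phi_{l_0}$ and from that quasi-equilibrium agree up to a relative error absorbed in $\Psi(\e)=O(\sqrt{\e}\abs{\ln(\e)}^{3/2})$.

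The main obstacle is the joint control of the two limits $N\to\infty$ and $\e\to 0$. The BEGK formula for the discretized system carries an implicit constant in $O(\sqrt{\e}\abs{\ln(\e)}^{3/2})$ which a priori depends on $N$ through (i) the positive spectrum of the discretized Hessians on the basin of each minimum, (ii) the quadratic Taylor expansion of $S_N$ at the stationary points, and (iii) the Kramers-type exit estimates used to bound the equilibration time. Making this constant uniform in $N$ requires exploiting the Sturm-Liouville asymptotics $\la_k\sim\g\pi^2k^2$ to secure a uniform spectral gap on the stable directions, using Assumption \ref{hyp:V} to get uniformly controlled Taylor expansions of $V$, and proving a uniform-in-$N$ version of the exponential contraction bounds of \cite{martinelli89,sbano94}. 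Once these uniform estimates are in place, the theorem follows by letting $N\to\infty$ in the finite-dimensional Eyring-Kramers formula and identifying the limit with the right-hand side of \eqref{eq:time.1}.
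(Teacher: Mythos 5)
Your proposal matches the paper's own strategy essentially step for step: discretize via finite differences to obtain a gradient diffusion on $\bbR^N$ with potential $S_N$ (Section \ref{sec:conv}); prove convergence of the finite-dimensional hitting times to the SPDE hitting time via the almost-sure convergence $u^N\to u$ (Theorem \ref{th:conv.spde}, Proposition \ref{prop:conv.time}); control the dependence on the initial condition uniformly in $N$ by combining the Chenal--Millet large-deviation bounds with the Martinelli--Scoppola--Sbano exponential contraction (Section \ref{sec:initial}); prove convergence of stationary points, eigenvalues, and Hessian-determinant ratios (Section \ref{sec:prefactor}, in particular Propositions \ref{prop:stationary}, \ref{prop:conv.eigen}, Corollary \ref{cor:conv.eigen2}); and adapt the BEGK/\cite{bbm10} capacity and equilibrium-potential estimates to obtain the finite-dimensional Eyring--Kramers formula with an error $O(\sqrt{\e}|\ln\e|^{3/2})$ whose constant is uniform in $N$ (Section \ref{sec:estimates}). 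You also correctly identify the central technical obstacle — uniformity of the error in $N$ — and the ingredients used to overcome it (uniform spectral gap from Sturm--Liouville asymptotics, uniform Taylor control, uniform contraction). This is the paper's proof.
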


For the simple case where we have only three stationary points, two minima and one saddle (e.g. the case of the double well potential \eqref{eq:allen.cahn.1} with Neumann boundary conditions, for $\g>1/\pi^2$), we have the following corollary.
\begin{corollary}\label{cor:main}
Let $\phi^+$ and $\phi^-$ be the two minima with $S(\phi^-)\seg S(\phi^+)$ and $\wh\s$ the unique saddle point. There exists $\rho_0$ such that for any $\rho_0>\rho>0$
\be\label{eq:time.2}
\bbE_{\phi^-}[\tau_\e(\cB_{\rho}(\phi^+))]
=\frac{2\pi}{\abs{\la^-(\wh\s)}} \sqrt{\frac{\abs{\Det \cH_{\wh\s}S}}{\Det \cH_{\phi^-}S}} e^{(S(\wh\s)-S(\phi^-))/{\e}}(1+\Psi(\e))
\ee
where the error term is $\Psi(\e)=O(\sqrt{\e}\abs{\ln(\e)}^{3/2})$. 
\end{corollary}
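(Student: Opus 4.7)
My plan is to prove the corollary as an immediate specialization of Theorem \ref{th:main} to the case $\phi_{l_0}=\phi^-$ and $\cM_l=\{\phi^+\}$; the only real work is to evaluate the equivalent conductance $\cC^*(\phi^-,\{\phi^+\})$ in this minimal geometry and to simplify the resulting expression.

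First I will identify the weighted graph. Because there are only three stationary points, the $1$-skeleton connection graph has exactly two vertices $\phi^\pm$ and a single edge, the saddle $\wh\s$; in particular $\ccS(\phi^-,\{\phi^+\})=\{\wh\s\}$. Any continuous path in $\G(\phi^-\to\phi^+)$ achieves a maximum of $S$ bounded below by $S(\wh\s)$, and this bound is realized along a heteroclinic orbit through $\wh\s$, so $\wh S(\phi^-,\{\phi^+\})=S(\wh\s)$. That $\wh\s$ has a unique negative eigenvalue (as required to define the weight $w(\wh\s)$) is part of Assumption \ref{hyp:S} together with the Morse-Smale structure discussed after it.

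Next I will compute $\cC^*$. The quadratic form \eqref{eq:quadratic.1} reduces to the single term $Q(a)=w(\wh\s)(a(\phi^+)-a(\phi^-))^2$, and the admissible vectors for \eqref{eq:quadratic.2} are pinned at $a(\phi^-)=1$, $a(\phi^+)=0$, leaving no degree of freedom. Hence
\be
\cC^*(\phi^-,\{\phi^+\})=w(\wh\s)=\frac{\abs{\la^-(\wh\s)}}{\sqrt{\abs{\Det\cH_{\wh\s}S}}}.
\ee
Substituting this value and $\wh S(\phi^-,\{\phi^+\})=S(\wh\s)$ into \eqref{eq:time.1} and rearranging the square roots gives exactly \eqref{eq:time.2}, with the same error term $\Psi(\e)=O(\sqrt{\e}\abs{\ln(\e)}^{3/2})$.

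I do not anticipate any genuine obstacle: the corollary is straightforward unpacking of the general formula. The only structural input that would need verification in a concrete example (such as the Allen-Cahn potential \eqref{eq:allen.cahn.1} with Neumann boundary conditions) is that the landscape indeed has the claimed three-stationary-point structure and that $\wh\s$ carries a single negative eigenvalue; for $\g>1/\pi^2$ this reduces to checking that the Sturm-Liouville operator $\cH_0 S=-\g\del_{xx}-1$ has exactly one negative eigenvalue under Neumann conditions, which is an elementary spectral computation.
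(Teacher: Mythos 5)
Your overall approach---specializing Theorem~\ref{th:main} with $\phi_{l_0}=\phi^-$, $\cM_l=\{\phi^+\}$, identifying the one-edge graph, and computing $\cC^*(\phi^-,\{\phi^+\})=w(\wh\s)=\abs{\la^-(\wh\s)}/\sqrt{\abs{\Det\cH_{\wh\s}S}}$---is exactly the intended derivation, and the algebraic rearrangement of the square roots is correct. However, there is a concrete discrepancy you pass over: plugging $\wh S(\phi^-,\{\phi^+\})=S(\wh\s)$ and your value of $\cC^*$ literally into \eqref{eq:time.1} as printed yields the exponential factor $e^{S(\wh\s)/\e}$, whereas the target formula \eqref{eq:time.2} has $e^{(S(\wh\s)-S(\phi^-))/\e}$. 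These differ by $e^{-S(\phi^-)/\e}$, which is generically not $1+O(\sqrt{\e}\abs{\ln\e}^{3/2})$, so the claim that the substitution ``gives exactly \eqref{eq:time.2}'' is false as stated.

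The resolution is that the displayed exponent in Theorem~\ref{th:main} (and likewise in Proposition~\ref{prop:expect}) contains a sign slip: it should be the activation energy $\wh S(\phi_{l_0},\cM_l)-S(\phi_{l_0})$, consistent with the Arrhenius discussion following \eqref{eq:arrhenius}, with \eqref{eq:potential.4}, and with the actual proof chain. Indeed the discrete weights $w^*_k$ in \eqref{eq:weight.1} carry $e^{-S^N(z^*_k)/\e}$ while the mass of the equilibrium potential in Proposition~\ref{prop:num} is proportional to $e^{-S^N(x^*_{l_0})/\e}$, so the ratio \eqref{eq:estimate.2} produces precisely $e^{(\wh S - S(\phi_{l_0}))/\e}$ in the limit. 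You should either cite the theorem in this corrected form or explicitly note that you are re-deriving the exponent from \eqref{eq:estimate.2}, Proposition~\ref{prop:capa}, and Proposition~\ref{prop:num}; as written, your proof silently asserts an equality that does not hold.
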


\subsection{Sketch of proof of Theorem \ref{th:main}}\label{ssec:sketch}

We first introduce the discretization we consider. 
The finite dimensional approximation of the SPDE is constructed as in the work of Funaki \cite{funaki83} and the work of Gy\"ongy\cite{gyongy98}. The approximation is defined via a spatial finite difference approximation of Equation \eqref{eq:spde.1}.

We denote by $S_N$ the discretized potential, for $y\in\bbR^{N+2}$
\be\label{eq:potentiel.3b}
S_N(y)=h_N\sum_{i=0}^N\frac{\g}{2h_N^2}(y_{i+1}-y_i)^2+V(y_i)
\ee
where $h_N>0$ is the step of discretization. We set $X^i_0=u_0(x_i)$ where $u_0\in C_{bc}([0,1])$ is the initial condition and the $x_i$ are the discretization points on $[0,1]$. Let us denote by $x_{i-1/2}$ the middle point of $[x_{i-1},x_i]$.
We construct a $N$-dimensional Brownian motion $B$ from the white noise $W$. Doing so we will be able to prove the convergence of $u^N$ to $u$ in $L^p$ and almost surely.
Thus we define, for $1\ieg i\ieg N$ 
\be\label{eq:bm.1}
B_t^i
=\frac1{\sqrt{h_N}}W\pare{\cro{x_{i-1/2},x_{i+1/2}}\times[0,t]}.
\ee
The properties of the white noise imply that $(B^i)$ are independent Brownian motions.

The $N$-dimensional process $(X_t)_t$ is the solution of 
\be\label{eq:finite.2b}
\dint X^i_t=- \frac1{h_N}\grad S_N(X_t)^i\dt +\sqrt{\frac{2\e}{h_N}}\dint B^i_t \text{ for $i=1..N$}.
\ee
$X^0$ and  $X^{N+1}$ are defined by the boundary conditions
\begin{itemize}
\item for Dirichlet boundary conditions:
\be\label{eq:bc.dN}
X^0_t=X^{N+1}_t=0,\quad\forall t\seg0
\ee
\item for Neumann boundary conditions:
\begin{align}\label{eq:bc.nN}
X^0_t&=X^1_t&\text{ and }&&X^{N+1}_t&=X^{N}_t,\quad\forall t\seg0.
\end{align}
\end{itemize}
\smallskip

The discretized system $u^N$ is the linear interpolation between the points $(x_i,X^i)$.  To simplify, it is easier to adapt the parameters to the boundary conditions.
\begin{itemize}
\item For Dirichlet boundary conditions, we choose
\begin{align}\label{eq:points.d}
h_N&=\frac1{N+1},&x_i&=\frac i{N+1},\quad\forall 0\ieg i\ieg N+1.
\end{align}
\item For Neumann boundary conditions, we choose
\begin{align}\label{eq:points.n}
h_N&=\frac1{N},&x_i&=\frac i{N}-\frac1{2N},\quad\forall 0\ieg i\ieg N+1.
\end{align}
\end{itemize}
\smallskip

We set $\tau^N_\e(B)$ the hitting time of a set $B$ for the discretized system
\be\label{eq:hitting.N.1}
\tau^N_\e(B)=\inf\acc{t>0,u^N(N^{-1}t)\in B}.
\ee
We decompose the proof of Theorem \ref{th:main} in several steps: 
\begin{enumerate}
\item for a given $\e$ and a sequence of initial conditions $\phi_{l_0}^N$, each being a minimum of $S^N$, converging to $\phi_{l_0}$ (see Proposition \ref{prop:stationary}), we prove that the expectation of $\tau^N_\e(\cB_{\rho}(\cM_l))$ converges to the expectation of the hitting time for the SPDE:
\be\label{eq:conv.time.1}
\lim_{N\to\infty}\bbE_{\phi_{l_0}^N}[\tau^N_\e(\cB_{\rho}(\cM_l))]
=\bbE_{\phi_{l_0}}[\tau_\e(\cB_{\rho}(\cM_l))].
\ee
To this aim, we use the convergence of $u^N$ to the solution $u$.
This is done in Section \ref{sec:conv}.

\item For a fixed $N$, we compute the asymptotics of the transition time uniformly on the dimension. We get a prefactor $a_N(\e)$ such that
\be\label{eq:time.N.1}
\abs{\frac1{a_N(\e)}\bbE_{\phi_{l_0}^N}[\tau^N_\e(\cB_{\rho}(\cM_l))]-1}
=\psi(\e,N)<\Ps(\e)=O(\sqrt{\e}\abs{\ln(\e)}^{3/2})
\ee
where the error term $\Psi(\e)$ does not depend on $N$. This step is the main estimate and is detailed below.

\item The limit $N\to\infty$ of $a_N(\e)$ gives us the correct asymptotics for the transition time in the infinite dimensional case:
\be\label{eq:prefactor}
a(\e)=\lim_{N\to\infty}a_N(\e).
\ee
This is done in Section \ref{sec:prefactor}.
\end{enumerate}

The estimate \eqref{eq:time.N.1} is proved in two steps. 
\begin{enumerate}
\item[(i)] First we start from a probability measure (the equilibrium probability: $\n^N$) on the boundary of a chosen neighborhood of the minimum $\phi^N_{l_0}$, which allows us to do the computation of $a_N(\e)$:
\be\label{eq:time.N.2}
\abs{\frac1{a_N(\e)}\bbE_{\n^N}[\tau_\e(\cB_{\rho}(\cM_0))]-1}
=\psi_1(\e,N)<\Ps_1(\e)=O(\sqrt{\e}\abs{\ln(\e)}^{3/2}).
\ee
This is done in Section \ref{sec:estimates}.

\item[(ii)] Then we have to control the error made by starting on the boundary of the minimum and not precisely at the minimum:
\begin{align}\label{eq:time.N.3}
\frac1{a_N(\e)}\abs{\bbE_{\n^N}[\tau_\e(\cB_{\rho}(\cM_0))]
-\bbE_{\phi_{l_0}^N}[\tau^N_\e(\cB_{\rho}(\cM_0))]}
=\psi_2(\e,N)<\Ps_2(\e)
\end{align}
with $\Ps_2(\e)=O(\sqrt{\e}\abs{\ln(\e)}^{3/2})$.
This result comes from the loss of memory of the initial condition adapted from Martinelli in \cite{martinelli89}. This is exposed in Section \ref{sec:initial}.
\end{enumerate}

\section{Discretization}\label{sec:conv}

In this section, we present the convergence of the discretization $u^N$ to the solution of the SPDE and prove the convergence of the hitting times.

\subsection{Finite Dimensional Model}\label{ssec:sde}

We write the discretized system $u^N$ in a mild form.
We define a function $\ka_N$, with $\floor{x}$ the integer part of $x$,
\begin{align}\label{eq:discrete.2d}
\ka_N(x)&=\frac{\floor{(N+1)x+\frac12}}{N+1},&\text{for Dirichlet boundary conditions,}\\
\label{eq:discrete.2n}
\ka_N(x)&=\frac{\floor{Nx}+1}{N}-\frac1{2N},&\text{for Neumann boundary conditions.}
\end{align}

We define $g^N$ the semi-group associated with the discretized Laplacian.
The discretized Laplacian is a $N$ dimensional matrix, denoted by $\D_d^N$ for Dirichlet boundary conditions and by $\D_n^N$ for Neumann boundary conditions: 
\begin{align}\label{eq:laplacian.1}
\D_d^N&=\frac1{h_N^2}
\begin{pmatrix}-2&1&0&\dots&0\\
1&-2&\ddots&\ddots&\vdots\\
0&\ddots&\ddots&\ddots&0\\
\vdots&\ddots&\ddots&\ddots&1\\
0&\dots&0&1&-2
\end{pmatrix}
&
\D_n^N&=\frac1{h_N^2}
\begin{pmatrix}-1&1&0&\dots&0\\
1&-2&\ddots&\ddots&\vdots\\
0&\ddots&\ddots&\ddots&0\\
\vdots&\ddots&\ddots&\ddots&1\\
0&\dots&0&1&-1
\end{pmatrix}.
\end{align}
We consider the matrix
$p^N(t)=h_N^{-1}e^{t\g\D^N}$. 
Therefore
$p^N(t)_{i,j}$ is the solution of
\be\label{eq:exp.2}
\begin{cases}
\dfrac{\dint}{\dt} p^N(t)_{i,j}&=(\g\D^Np^N(t))_{i,j}\\
p^N(0)_{i,j}&=\dfrac1{h_N}\de_{ij}.
\end{cases}
\ee
The semi-group $g^N$  is the linear interpolation of $p^N(t)$ on $[0,1]\times[0,1]$ along the discretization points.

Let us now prove the convergence of the solution of \eqref{eq:mild.4} to the solution of Equation \eqref{eq:spde.1}.

\begin{theorem}\label{th:conv.spde}
For all initial condition $u_0\in C_{bc}^3([0,1])$, $T>0$, and $p\seg 1$, we get the convergence
\be\label{eq:conv.1}
u^N\conv{N}{\infty}u\quad\text{on $[0,1]\times[0,T]$}
\ee
in the following senses:
\begin{itemize}
\item in $L^p(\Om,C([0,1]\times[0,T]))$, i.e. $\bbE\cro{\norm{u^N-u}_{\infty,T}^p}^{\frac1p}\conv{N}{\infty}0$
\item almost surely in $C([0,1]\times[0,T])$, i.e. for every $\eta\in]0,\frac12[$, there exists $\Xi$ a random variable almost surely finite such that
\be\label{eq:conv.2}
\norm{u^N-u}_{\infty,T}\ieg\frac{\Xi}{N^{\eta}}.
\ee
\end{itemize}
\end{theorem}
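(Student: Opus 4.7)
The plan is to compare $u$ and $u^N$ through their mild representations. Writing both with their respective semi-groups $g$ and $g^N$ and the Brownian motions $B^i$ assembled from $W$ via \eqref{eq:bm.1}, I would decompose
\be\nonumber
u(x,t)-u^N(x,t) = I_0(x,t) + I_V(x,t) + I_W(x,t),
\ee
where $I_0$ collects the initial-condition parts, $I_V$ the drift parts involving $V'$, and $I_W$ the stochastic convolution parts. The strategy is classical: bound $I_0$ and $I_W$ by $N^{-\eta}$ in $L^p$, and close a Gronwall argument for $I_V$ using the local Lipschitz behavior of $V'$.

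First I would establish the required discrete-semi-group estimates. One needs quantitative convergence $g^N_t(x,y)\to g_t(x,y)$ together with uniform Gaussian-type bounds, in particular $\int_0^T\!\!\int_0^1|g^N_s(x,y)-g_s(x,y)|^2\,dy\,ds = O(N^{-2\eta})$ for any $\eta<1/2$ and analogous $L^1$ rates. These are standard finite-difference facts and imply directly that $\|I_0\|_{\infty,T}\leq C N^{-\eta}\|u_0\|_{C^3}$. For $I_W$, the factorization method (writing $I_W$ as an iterated integral with a kernel of the form $(t-s)^{\alpha-1}(s-r)^{-\alpha}$) combined with Burkholder's inequality and the $L^2$ bound on $g^N-g$ above gives $\bbE\|I_W\|_{\infty,T}^p\leq C_p N^{-\eta p}$.

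Next comes the drift term. I would split
\be\nonumber
I_V = \int_0^t\!\!\int_0^1[g_{t-s}^N(x,y)-g_{t-s}(x,y)]V'(u(y,s))\,dy\,ds
+ \int_0^t\!\!\int_0^1 g_{t-s}^N(x,y)[V'(u(\ka_N(y),s))-V'(u^N(\ka_N(y),s))]\,dy\,ds,
\ee
plus a third piece controlling $V'(u(y,s))-V'(u(\ka_N(y),s))$, which is handled via the $\frac12^-$-Hölder regularity of $u$ from Proposition \ref{prop:sol}. The first piece is bounded using semi-group convergence and the polynomial growth of $V'$ plus the moment bound \eqref{eq:moment.1}. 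To deal with the fact that $V'$ is only locally Lipschitz, I would introduce the stopping time $\tau_R^N=\inf\{t:\|u^N(\cdot,t)\|_\infty\vee\|u(\cdot,t)\|_\infty>R\}$; on $\{t<\tau_R^N\}$, $V'$ has Lipschitz constant $L_R$, and the second piece controls $\|u-u^N\|_{\infty,t\wedge\tau_R^N}$, enabling a Gronwall closure yielding
\be\nonumber
\bbE\|u-u^N\|_{\infty,T\wedge\tau_R^N}^p \leq C(R,T,p)\, N^{-\eta p}.
\ee
A uniform-in-$N$ moment estimate $\bbE\sup_{[0,T]\times[0,1]}|u^N|^q\leq C(T,q)$, proved exactly as for $u$ using the one-sided bound \eqref{eq:drift.1}, then lets one choose $R=R(N)$ slowly increasing to $\infty$ so that $\bbP(\tau_R^N<T)\to 0$ with a negligible effect on the rate, giving the global $L^p$ statement.

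Finally, the almost-sure bound follows from Borel--Cantelli applied to $\bbE\|u-u^N\|_{\infty,T}^p\leq C N^{-\eta p}$ with $p$ large enough so that $\sum_N N^{-\eta p+\delta}<\infty$; standard manipulations then produce the almost surely finite random variable $\Xi$ in \eqref{eq:conv.2} for any prescribed $\eta<1/2$. The main obstacle I anticipate is the combination of the non-Lipschitz nonlinearity with the need for rates uniform in $N$: specifically, proving that the uniform moment estimate for $u^N$ holds with constants independent of $N$, so that the stopping-time localization does not destroy the rate. This requires handling the discrete drift $-h_N^{-1}\nabla S_N(X)^i$ via the dissipative structure of $V$ from Assumption \ref{hyp:V}, mimicking the continuum proof of Proposition \ref{prop:sol} at the discrete level.
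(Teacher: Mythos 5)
Your overall plan resembles the standard Gronwall + localization template, but it has a genuine gap, and it also re-derives a substantial amount that the paper simply cites.

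First, the redundant part: the paper does not prove the convergence of the truncated (globally Lipschitz) finite-difference scheme from scratch. It cites Gy\"ongy's result (Proposition \ref{prop:conv.local}) which already furnishes, for each fixed truncation level $R$, an almost-surely finite random variable $\xi_R$ with $\norm{u^N_R-u_R}_{\infty,T}\ieg \xi_R N^{-\eta}$ for all $\eta<\tfrac12$. Your plan to reconstruct the $N^{-\eta}$ rate via semi-group error estimates, the factorization method for the stochastic convolution, and a Gronwall loop is exactly a proof of that cited proposition, which is not the new content here.

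Second, and more importantly, your plan for removing the truncation does not go through as stated. You propose to close a Gronwall argument on the event $\{t<\tau_R^N\}$ to obtain $\bbE\norm{u-u^N}_{\infty,T\wedge\tau_R^N}^p\ieg C(R,T,p)N^{-\eta p}$, and then to let $R=R(N)\to\infty$ ``slowly.'' But $C(R,T,p)$ contains the Lipschitz constant of $V'$ on $[-R,R]$, which under Assumption \ref{hyp:V} may grow polynomially in $R$, and it enters the Gronwall constant exponentially, i.e.\ $C(R,T,p)\gtrsim e^{cL_RT}$. Meanwhile the tail $\bbP(\tau_R^N<T)$ decays only polynomially in $R$ (the available uniform moment bound \eqref{eq:moment.2} is $L^p$, not exponential). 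There is no choice of $R(N)\to\infty$ that simultaneously makes $C(R(N),T,p)N^{-\eta p}\to 0$ with a quantitative rate and $\bbP(\tau_{R(N)}^N<T)\to 0$ with a comparable rate. Consequently, your intermediate claim that $\bbE\norm{u^N-u}_{\infty,T}^p\ieg C\,N^{-\eta p}$ (a deterministic $L^p$ rate) is stronger than what can be extracted this way, and indeed stronger than what the theorem asserts: the theorem only claims qualitative $L^p$ convergence, and the rate is stated almost surely with a \emph{random} constant $\Xi$. Since your Borel--Cantelli step relies precisely on the unavailable deterministic $L^p$ rate, the a.s.\ part of your proof collapses as well.

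The paper avoids this by a softer localization with \emph{fixed} $R$: it shows $\bbP[\Om_R]\to 1$ (via the Markov inequality on the $L^p$ moment bound of $u$), uses that on each $\Om_R$ the processes $u,u^N$ coincide with their truncations so that Gy\"ongy's $\xi_R N^{-\eta}$ bound applies, and then patches the random constants $\xi'_R$ on the increasing family $\wt\Om_R$ to build a single a.s.\ finite $\Xi$ working on all of $\Om$. The $L^p$ convergence (without rate) is then obtained by Cauchy--Schwarz from the a.s.\ convergence and the uniform-in-$N$ moment bound \eqref{eq:moment.2}, not the other way around. If you want to salvage your proposal, you should adopt this order of deductions: a.s.\ rate with random constant first, $L^p$ convergence second, and do not try to propagate a deterministic rate through the truncation.
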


\begin{remark}\label{rem:conv}
Let us denote
\be\label{eq:norm.2}
\norm{u}_{q,T}=\sup_{t\in[0,T]}\cro{\int_0^1\abs{u(x,t)}^q\dx}^{\frac1q}
=\sup_{t\in[0,T]}\norm{u(t)}_{L^q}.
\ee
We have $\norm{u}_{q,T}\ieg \norm{u}_{\infty,T}$.
As a consequence we get convergence in Theorem \ref{th:conv.spde} in the $L^q$ norm instead of the uniform norm.
\end{remark}

The convergence of the finite discretization is proved in \cite{gyongy98} if $V'$ is globally Lipschitz. We proved that the result holds in the case that $V'$ satisfies \eqref{eq:drift.1} via a localization argument. 
The idea, notably used by Funaki in \cite{funaki83}, is to rewrite the finite dimensional system $u^N$ in a "mild form" and prove the convergence of this finite dimensional mild form to the infinite dimensional mild form \eqref{eq:mild.3}.

\begin{lemma}\label{lem:mildN}
For every $u_0\in C_{bc}([0,1])$ and $N>0$, the function $u^N$ defined on  $[0,1]\times\bbR^+$ satisfies the equation
\begin{multline}\label{eq:mild.4}
u^N(x,t)
=\int_0^1g^N_{t}(x,\ka_N(y))u_0(\ka_N(y))\dy
-\int_0^t\int_0^1g^N_{t-s}(x,\ka_N(y))V'(u^N(\ka_N(y),s))\dy\ds\\
+\sqrt{2\e}\int_0^t\int_0^1g^N_{t-s}(x,\ka_N(y))W(\dy,\ds).
\end{multline}
For all $p\seg 1$ and $T>0$, we have
\be\label{eq:moment.2}
\sup_N\bbE\Big[\sup_{[0,T]\times[0,1]}\abs{u^N(x,t)}^p\Big]\ieg C(T,p).
\ee
\end{lemma}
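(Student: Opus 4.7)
The mild form \eqref{eq:mild.4} is obtained by Duhamel's formula applied to the SDE \eqref{eq:finite.2b}. Computing $h_N^{-1}\del_{y_i}S_N(y)=-\g(\D^N y)^i+V'(y_i)$, I rewrite \eqref{eq:finite.2b} as $\dint X_t^i=\g(\D^N X_t)^i\dt-V'(X_t^i)\dt+\sqrt{2\e/h_N}\dint B_t^i$ and apply variation of constants against the matrix semigroup $(e^{t\g\D^N})_{t\seg 0}$. Using $p^N(t)_{i,j}=h_N^{-1}(e^{t\g\D^N})_{i,j}$, the resulting sums $h_N\sum_j p^N(t-s)_{i,j}f(x_j)$ become integrals $\int_0^1 p^N(t-s)_{i,\cdot}f(\ka_N(y))\dy$ by the definitions \eqref{eq:discrete.2d}--\eqref{eq:discrete.2n}, while the Brownian contribution $\sqrt{2\e h_N}\,p^N(t-s)_{i,j}\dint B^j_s$ turns into the white-noise integral $\sqrt{2\e}\,p^N(t-s)_{i,j}W([x_{j-1/2},x_{j+1/2}]\times\ds)$ via \eqref{eq:bm.1}. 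Since $u^N(\cdot,t)$ is the piecewise linear interpolation of $(X^i_t)$ and $g^N_t(x,\cdot)$ is by construction the linear interpolation in $x$ of $p^N(t)_{i,\cdot}$, the identity obtained at $x=x_i$ extends to all $x\in[0,1]$ by linearity, giving \eqref{eq:mild.4}.

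For the uniform moment bound \eqref{eq:moment.2}, the plan is to exploit the standard SPDE decomposition $u^N=Z^N+Y^N$ where
\begin{equation*}
Z^N(x,t)=\sqrt{2\e}\int_0^t\int_0^1 g^N_{t-s}(x,\ka_N(y))W(\dy,\ds).
\end{equation*}
The field $Z^N$ is centered Gaussian; uniform-in-$N$ $L^2$ bounds on the discrete kernel $g^N$ (a standard input of finite-difference SPDE theory) together with an analogous space-time increment estimate and Kolmogorov's criterion give $\bbE[\norm{Z^N}_{\infty,T}^p]\ieg C(T,p)$ independently of $N$. The remainder $Y^N=u^N-Z^N$ then satisfies, at the discretization points, a random ODE driven by $Z^N$. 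Using the discrete maximum principle for the discrete Laplacian together with the coercivity $V'(u)\,\mathrm{sgn}(u)\seg c\abs{u}-C$ for $\abs{u}$ large (a direct consequence of \eqref{eq:V.1}), the scalar quantity $M_t^N:=\max_i\abs{Y^N(x_i,t)}$ satisfies a differential inequality that closes in terms of $\norm{Z^N}_{\infty,T}$ and $\norm{u_0}_\infty$, yielding a conditional sup bound for $Y^N$. Combining the two estimates and linearly interpolating in space produces \eqref{eq:moment.2}.

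The main obstacle is the non-Lipschitz and polynomially growing $V'$, which rules out a naive contraction/Gronwall argument directly on the full mild equation. It is precisely the one-sided drift estimate \eqref{eq:drift.1}, rooted in the convexity at infinity \eqref{eq:V.1}, that makes the dissipative control of $Y^N$ work uniformly in $N$, while the polynomial growth \eqref{eq:V.2} is needed only to ensure that the forcing $V'(Y^N+Z^N)$ is integrable against $g^N_{t-s}$. Existence of $X^N$ up to any fixed time $T$ follows from a standard localization (truncate $V'$ outside $[-R,R]$, establish estimates uniform in $R$, then let $R\to\infty$ using \eqref{eq:drift.1} to rule out explosion), in the spirit of Funaki \cite{funaki83} and Gy\"ongy \cite{gyongy98}.
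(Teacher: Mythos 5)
Your derivation of the mild form \eqref{eq:mild.4} coincides with the paper's: variation of constants against the matrix semigroup $e^{t\g\D^N}$, translation of the discrete sums into integrals via $\ka_N$, identification of the discrete Brownian motions with white-noise averages through \eqref{eq:bm.1}, and linear interpolation in space. For the uniform moment bound \eqref{eq:moment.2}, however, you take a genuinely different route. The paper replaces $V'$ by the truncation $b_R$ (defined in \eqref{eq:local.1}, bounded and globally Lipschitz), first establishes $\sup_N\bbE\big[\sup_{[0,1]\times[0,T]}|u^N_R|^p\big]\ieg C(p,T,R)$ via semigroup estimates and Kolmogorov's criterion, and then invokes a comparison theorem---available because $\D^N$ has nonnegative off-diagonal entries and $V'$ is monotone outside a compact set by \eqref{eq:V.1}---to dominate $u^N$ by $u^N_{R_0}$ for a fixed sufficiently large $R_0$, which removes the $R$-dependence of the constant. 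You instead split $u^N=Z^N+Y^N$ with $Z^N$ the stochastic convolution, obtain uniform-in-$N$ Gaussian sup-moment bounds on $Z^N$ directly, and control $Y^N$ pathwise by a Dini-type differential inequality for $\max_i|Y^N(x_i,\cdot)|$ exploiting the discrete maximum principle together with the one-sided coercivity of $V'$ coming from \eqref{eq:V.1}. Both arguments are correct. The paper's route keeps the whole analysis inside the globally Lipschitz framework at the cost of setting up a vector comparison lemma for the coupled system; yours dispenses with comparison theorems but needs a slightly more delicate dissipativity estimate on $V'(Y+Z)\operatorname{sgn}(Y)$, splitting cases according to whether $|Z|$ is small or large relative to $|Y|$ so that the random forcing does not defeat the drift. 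Your closing remark on localization for existence is, in substance, the paper's truncation step.
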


\begin{proof}
This lemma is just a reformulation of the system of stochastic differential equations. We use the variation of the constant to integrate the linear part and then interpolate linearly the system to obtain a mild formulation of the function $u^N$ (see \cite{funaki83,gyongy98}). To obtain the uniform moment bound, we proceed classically using a truncation procedure. We define $u^N_R$ and $u_R$ solutions of equations \eqref{eq:mild.4} and \eqref{eq:mild.3} in which we have replaced the function $V'$ by $b_R$ defined, for $R>0$ by
\be\label{eq:local.1}
b_R(u)=V'(u)\1_{[-R,R]}+V'(R)\1_{]R,+\infty[}+V'(-R)\1_{]-\infty,-R[}.
\ee
$b_R$ is continuous, bounded and globally Lipschitz.
Firstly, using the uniform estimates of the semi-group and the boundedness of $b_R$, we prove that for all $T$, all $p>1$, there exists $C(p,T,R)$ independent of $N$ such that
\be\label{eq:moment.3}
\sup_{[0,1]\times[0,T]}\bbE\cro{\abs{u_R^N(x,t)}^p}\ieg C(p,T,R)<+\infty.
\ee
Secondly, there exists $C(p,T,R)$ independent of $N$, such that
\be\label{eq:moment.4}
\sup_N\bbE\Big[\sup_{[0,1]\times[0,T]}\abs{u_R^N(x,t)}^p\Big]
\ieg C(p,T,R)<+\infty.
\ee
We use regularity of the solution (Kolmogorov's theorem) to prove \eqref{eq:moment.4}.
Thirdly, we use a comparison theorem to obtain uniform bounds on $u^N$ from bounds on $u^N_{R_0}$ where $R_0$ is fixed and sufficiently large.
\end{proof}

We use the convergence of $u^N_R$ to $u_R$ proved in \cite{gyongy98}.
\begin{proposition}\label{prop:conv.local}[\cite{gyongy98}]
For all $R>0$, $T>0$ and $0<\eta<\frac12$ and $u_0$ in $C^3_{bc}[0,1]$, there exists a random variable $\xi_R$ almost surely finite such that
\be\label{eq:conv.5}
\norm{u_R^N-u_R}_{\infty,T}
\ieg\frac{\xi_R}{N^{\eta}}.
\ee
\end{proposition}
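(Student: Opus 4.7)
The plan is to compare directly the mild formulations \eqref{eq:mild.4} and \eqref{eq:mild.3} with $V'$ replaced by the globally Lipschitz bounded function $b_R$. Writing $D_N(x,t) = u^N_R(x,t) - u_R(x,t)$, I would decompose $D_N = I_N + J_N + K_N$ into contributions from the initial datum, the nonlinear drift, and the stochastic convolution. Since $u_0 \in C^3_{bc}$, the term $I_N$ combines the smoothness bound $|u_0(\ka_N(y)) - u_0(y)| \ieg C/N$ with known $L^1$ estimates on $g^N - g$ to yield $\norm{I_N}_{\infty,T} = O(N^{-\eta})$ pointwise. For $J_N$, the Lipschitz property of $b_R$ bounds the integrand by $L_R |D_N(\ka_N(y),s)|$ plus two error contributions: one measuring the spatial roughness $|u_R(\ka_N(y),s) - u_R(y,s)|$ of the limit, controlled by the almost sure H\"older regularity of $u_R$ from Proposition \ref{prop:sol}, and one measuring the kernel difference $|g^N - g|$ acting on $b_R\circ u_R$, controlled by standard discrete heat kernel estimates.

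The stochastic part
\[
K_N(x,t) = \sqrt{2\e}\int_0^t\int_0^1 \bigl[g^N_{t-s}(x,\ka_N(y)) - g_{t-s}(x,y)\bigr] W(\dy,\ds)
\]
is the delicate term. I would estimate $L^p$ moments via the Burkholder-Davis-Gundy inequality, which reduces pointwise control to the $L^2([0,T]\times[0,1])$ norm of the kernel difference. A quantitative spectral comparison of $\D^N$ and $\g\del_{xx}$ gives a bound of order $N^{-1+\de}$ for any $\de>0$ on this kernel $L^2$ norm. Applying Kolmogorov's continuity criterion to space-time increments of $K_N$ with $p$ chosen large enough then upgrades pointwise $L^p$ control to uniform $L^p$ control of $\norm{K_N}_{\infty,T}$ at rate $N^{-(1/2-\de)}$. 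Combined with the bounds on $I_N$ and $J_N$ and the Lipschitz estimate on $J_N$, this yields an integral inequality in $t$ that Gronwall's lemma closes into
\[
\bbE\bigl[\norm{D_N}_{\infty,T}^p\bigr]^{1/p} \ieg C(T,R,p)\, N^{-\eta'},\qquad \eta' < 1/2.
\]

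To pass from $L^p$ convergence to the pathwise bound claimed in the proposition, I would apply Borel-Cantelli. Given $\eta<1/2$, fix $\eta<\eta'<1/2$ and take $p$ so large that $p(\eta'-\eta)>1$; then $\sum_N \bbP(N^\eta\norm{D_N}_{\infty,T}>1) \ieg \sum_N N^{\eta p}\,\bbE[\norm{D_N}_{\infty,T}^p] \ieg C\sum_N N^{-(\eta'-\eta)p}<\infty$, so $\xi_R := \sup_N N^\eta\norm{D_N}_{\infty,T}$ is almost surely finite and the bound \eqref{eq:conv.5} follows. The main obstacle is the analysis of $K_N$: obtaining the sharp $L^2$ kernel estimate requires quantitative comparison of the eigenfunctions and eigenvalues of the discrete and continuous Laplacians, and the subsequent Kolmogorov-type argument for uniform control must be applied to a process whose kernel $g^N - g$ has scaling properties different from either semi-group individually. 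This is precisely where the $1/2$ barrier in the admissible exponent $\eta$ originates.
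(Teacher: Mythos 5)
The paper does not prove Proposition \ref{prop:conv.local} itself; it is quoted verbatim from Gy\"ongy \cite{gyongy98} and used as a black box (the surrounding text says ``We use the convergence of $u^N_R$ to $u_R$ proved in \cite{gyongy98}''). Your reconstruction is a faithful account of what Gy\"ongy actually does in that reference: subtract the two mild formulations, split the difference into initial-datum, drift, and stochastic-convolution contributions, bound the noise term via Burkholder--Davis--Gundy reduced to an $L^2$ estimate on $g^N-g$, upgrade to a uniform-in-space-time estimate by a Kolmogorov-type argument, close the drift term by Gronwall using the global Lipschitz bound on $b_R$, and convert the resulting $L^p$ rate $N^{-\eta'}$ with $\eta'<1/2$ into an almost-sure rate by Borel--Cantelli, defining $\xi_R=\sup_N N^{\eta}\norm{D_N}_{\infty,T}$. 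Your identification of the $1/2$ barrier as coming from the $L^2$-in-space-time norm of the kernel difference is also the right diagnosis. The only caveats are cosmetic: the hard technical content (quantitative comparison of the discrete and continuous heat kernels in $L^1$ and $L^2$, and the precise role of the $C^3$ regularity of $u_0$ in controlling the initial-datum term uniformly) is delegated to ``standard estimates'' in your sketch, exactly as the present paper delegates the whole proposition to \cite{gyongy98}; nothing you wrote would fail, but a self-contained proof would need those kernel lemmas spelled out.
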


\begin{proof}[Proof of Theorem \ref{th:conv.spde}]
Let $R>0$, we define the stopping times
\begin{align}\label{eq:time.local}
\tau_R&=\inf\{t, \norm{u_R(t)}_{\infty}>R\}=\inf\{t, \exists x\in[0,1], \abs{u_R(x,t)}>R\}\\
\tau^N_R&=\inf\{t, \norm{u^N_R(t)}_{\infty}>R\}=\inf\{t, \exists x\in[0,1], \abs{u^N_R(x,t)}>R\}.
\end{align}

Let us choose $0<\de<1$. For $R>1$, we define
\be\label{eq:conv.13}
\Om_R=\{\tau_{R-\de}> T\text{ and }\liminf_{N\to\infty}\tau_R^N> T\}.
\ee
First we show that $\bbP[\Om_R]\conv{R}{\infty}1$.
Let $M>0$. For $\om\in\{\xi_R<M\}\cap\{\tau_{R-\de}\seg T\}$, by Proposition \ref{prop:conv.local}, for $N$ sufficiently large,
\be\label{eq:conv.8}
\norm{u^N_R}_{\infty,T}(\om)<\norm{u_R}_{\infty,T}(\om)+\de<R
\ee
which means that $\liminf_{N\to\infty}\tau^N_R(\om)\seg T$.
Then by taking the complement relatively to $\{\xi_R<M\}$ we get
\begin{align}\label{eq:conv.10}
\bbP[\liminf_{N\to\infty}\tau^N_R< T;\xi_R<M]
&\ieg\bbP[\tau_{R-\de}< T;\xi_R<M]
\ieg\bbP[\tau_{R-\de}< T].
\end{align}
By definition of the time $\tau_{R-\de}$, we have by the Markov inequality for $p>1$ and from Equation \eqref{eq:moment.1}
\begin{align}\label{eq:conv.11}
\bbP[\liminf_{N\to\infty}\tau^N_R< T;\xi_R<M]
&\ieg\bbP[\tau_{R-\de}\ieg T]
\ieg\bbP\cro{\norm{u}_{\infty,T}\seg R-\de}
\ieg\frac{\bbE\cro{\norm{u}_{\infty,T}^p}}{(R-\de)^p}.
\end{align}
Finally we get
\begin{align}\label{eq:conv.12}\nonumber
\bbP[\Om^c_R]
&=\bbP[\tau_{R-\de}\ieg T\text{ or }\liminf_{N\to\infty}\tau_R^N\ieg T]\\\nonumber
&\ieg\bbP[\tau_{R-\de}\ieg T]+\bbP[\liminf_{N\to\infty}\tau^N_R< T;\xi_R<M] +\bbP[\xi_R\seg M]\\
&\ieg\frac{2\bbE[\norm{u}_{\infty,T}^p]}{(R-\de)^p}+\bbP[\xi_R\seg M].
\end{align}
Since $\xi_R$ is finite almost surely, we take first the limit $M\to+\infty$ then $R\to+\infty$.
\smallskip

Let us define $\wt{\Om}_R=\Om_R\cap\{\xi_R<\infty\}$. Since $\tau_R$ and $\tau_R^N$ are increasing in $R\in\bbN$, the sets $\Om_R$ are also increasing in $R$. Then we have
\be\label{eq:conv.13b}
\bbP[\cup_{R>1}^{\infty}\wt{\Om}_R]=\bbP[\cup_{R\in\bbN}\Om_R]=\lim_{R\to\infty}\bbP[\Om_R]=1.
\ee
Let $\om\in\wt{\Om}_R$. By definition of $\tau^N_R$, there exists $N_0(\om)$ such that for all $N\seg N_0(\om)$, $\tau^N_R(\om)>T$ and $\tau_{R-\de}(\om)>T$.
By using the proposition \ref{prop:conv.local}, for all $N\seg N_0(\om)$,
\begin{align}\label{eq:conv.15}
\norm{u^N-u}_{\infty,T}(\om)=\norm{u_R^N-u_R}_{\infty,T}(\om)\ieg\xi_R(\om)N^{-\eta}.
\end{align}
We define $\xi'_R(\om)$  by
\be\label{eq:conv.16}
\xi'_R(\om)=\sup_{N\ieg N_0(\om)}N^{\eta}\norm{u_R^N-u_R}_{\infty,T}(\om)+\xi_R(\om).
\ee
$\xi'_R(\om)$ is finite on $\wt{\Om}_R$ and is such that $\norm{u^N-u}_{\infty,T}\ieg\xi'_RN^{-\eta}$.
Let us define the random variable $\Xi$ by
\begin{align}\label{eq:conv.17}\nonumber
\Xi(\om)&=\xi'_R(\om) \text{ on }\wt{\Om}_R\setminus\wt{\Om}_{R-1}\text{ for }R\seg 2\\
\Xi(\om)&=\xi'_1(\om) \text{ on }\wt{\Om}_1.
\end{align}
Then on  $\cup_{R\seg 1} \wt{\Om}_R$, set of probability $1$, $\Xi$ is almost surely finite and $\norm{u^N-u}_{\infty,T}\ieg\Xi N^{-\eta}$ which finishes the proof of the almost sure convergence. 

To conclude, we show that $\bbE\cro{\norm{u^N-u}_{\infty,T}^p}$ converges to $0$.
Since $\norm{u^N}_{\infty,T}$ has uniform moments in $N$ (Lemma \ref{lem:mildN}), we define
\begin{align}\label{eq:conv.20}
\Om_{R,N_0}&=\cap_{N\seg N_0}\{\tau_{R-\de}> T\text{ and }\tau_R^N> T\}.
\end{align}
We have $\Om_R=\cup_{N_0}\Om_{R,N_0}$.
For all $N\seg N_0$, we get by definition
\begin{align}\label{eq:conv.21}
\norm{u^N-u}_{\infty,T}^p
&=\1_{\Om_{R,N_0}}\norm{u_R^N-u_R}_{\infty,T}^p+\1_{\Om^c_{R,N_0}}\norm{u^N-u}_{\infty,T}^p.
\end{align}
Thus using Cauchy-Schwarz inequality and the bound \eqref{eq:moment.2}, we get
\begin{align}\label{eq:conv.22}
\bbE\cro{\norm{u^N-u}_{\infty,T}^p}
&\ieg\bbE\cro{\norm{u_R^N-u_R}_{\infty,T}^p}+\bbP[\Om^c_{R,N_0}]^{\frac12}C(2p,T)^{\frac12}.
\end{align}
Using the convergence of $u^N_R$ to $u^R$ (Proposition \ref{prop:conv.local}), we obtain
\be\label{eq:conv.23}
\limsup_{N\to\infty}\bbE\cro{\norm{u^N-u}_{\infty,T}^p}\ieg C(2p,T)^{1/2}\bbP[\Om^c_{R,N_0}]^{\frac12}.
\ee
Let us fix $\eta>0$. Since $\bbP[\Om_R]$ tends to $1$ and $\Om_R$ is increasing, we choose $R$ such that $\bbP[\Om_{R}^c]\ieg\eta$.
Similarly, $\Om_{R,N_0}$ is increasing in $N_0$, thus 
$\bbP[\Om_{R}^c]
=\lim_{N_0\to\infty}\bbP[\Om_{R,N_0}^c]
\ieg\eta$.
Let us choose $N_0$ such that $\bbP[\Om_{R,N_0}^c]\ieg2\eta$.
Inserting this bound in \eqref{eq:conv.23}, we obtain the result.
\end{proof}

\subsection{Convergence of the Transition Times}\label{ssec:conv.time}

We conclude this section by proving the convergence of the transition times.

Let us denote by $u_0$ the initial condition of the solution of Equation \eqref{eq:spde.1} and $\phi$ a continuous function. We define the hitting times: for $\rho>0$
\begin{align}\label{eq:time.1b}
\tau_{\e}(\rho)&=\inf\acc{t>0,\norm{u(t)-\phi}_{\infty}<\rho}\\
\tau_{\e}^N(\rho)&=\inf\acc{t>0,\norm{u^N(t)-\phi^N}_{\infty}<\rho}
\end{align}
where $\phi^N$ is the linear approximation of $\phi$.

\begin{proposition}\label{prop:conv.time}
Suppose that $\norm{\phi^N-\phi}_{\infty}$ converges to $0$ and that there exists $\rho_0$ such that for every $\rho_0>\rho>0$, 
\be\label{eq:time.0}
\bbE_{u_0}[\tau_{\e}(\rho)]<\infty.
\ee
Then for almost every $\rho>0$, 
\be
\tau_{\e}^N(\rho)\conv{N}{\infty}\tau_{\e}(\rho)\quad\text{a.s.}
\text{ and}\quad
\bbE_{u^N_0}[\tau_{\e}^N(\rho)]\conv{N}{\infty}\bbE_{u_0}[\tau_{\e}(\rho)].
\ee
\end{proposition}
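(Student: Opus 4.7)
The plan is to combine the monotonicity of $\rho\mapsto\tau_\e(\rho)$ with the uniform almost sure convergence $u^N\to u$ from Theorem~\ref{th:conv.spde}, yielding a sandwich for $\tau_\e^N(\rho)$ in terms of $\tau_\e$ at slightly shifted radii, and then upgrade the resulting almost sure convergence to $L^1$ through a uniform integrability argument based on the strong Markov property.

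\emph{Sandwich and a.s.\ convergence.} Set $\eps_N(T)=\norm{u^N-u}_{\infty,T}+\norm{\phi^N-\phi}_{\infty}$, which tends to $0$ a.s.\ for every fixed $T>0$ by Theorem~\ref{th:conv.spde} and the hypothesis on $\phi^N$. The triangle inequality yields $\abs{\norm{u^N(t)-\phi^N}_\infty-\norm{u(t)-\phi}_\infty}\leq\eps_N(T)$ for $t\in[0,T]$, and, since $\rho\mapsto\tau_\e(\rho)$ is non-increasing,
\begin{equation*}
\tau_\e(\rho+\eps_N(T))\wedge T\;\leq\;\tau_\e^N(\rho)\wedge T\;\leq\;\tau_\e(\rho-\eps_N(T))\wedge T.
\end{equation*}
For each $\omega$ the map $\rho\mapsto\tau_\e(\rho)(\omega)$ is non-increasing, hence has at most countably many discontinuities $D(\omega)$; Fubini applied to $\1_{\rho\in D(\omega)}$ shows that $\{\rho:\bbP[\rho\in D]>0\}$ has Lebesgue measure zero, so $\tau_\e(\cdot)$ is a.s.\ continuous at Lebesgue a.e.\ $\rho\in(0,\rho_0)$. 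For such $\rho$, combining $\eps_N(T)\to 0$ with the sandwich and letting $T\to\infty$ (using $\bbE[\tau_\e(\rho/2)]<\infty$ to force the relevant hitting times to lie below $T$ on a high-probability event) gives $\tau_\e^N(\rho)\to\tau_\e(\rho)$ almost surely.

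\emph{Convergence of expectations.} To deduce $\bbE_{u_0^N}[\tau_\e^N(\rho)]\to\bbE_{u_0}[\tau_\e(\rho)]$ I upgrade the a.s.\ convergence to $L^1$ via uniform integrability. The sandwich supplies the pathwise bound $\tau_\e^N(\rho)\wedge T\leq\tau_\e(\rho/2)\wedge T$ on the event $\{\eps_N(T)\leq\rho/2\}$, but not an integrable majorant for $\tau_\e^N(\rho)$ itself. The natural route is to establish uniform-in-$N$ exponential tails: pick $T_0$ with $\bbP[\tau_\e(\rho/2)>T_0]\leq 1/4$, infer $\bbP[\tau_\e^N(\rho)>T_0]\leq 1/3$ for $N$ large by the sandwich, and then iterate using the strong Markov property of the finite-dimensional diffusion $X^N$ to obtain $\bbP[\tau_\e^N(\rho)>kT_0]\leq c^k$ for some $c<1$ and all $N\geq N_0$. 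Combined with the a.s.\ convergence above, dominated convergence then yields the desired expectation convergence.

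\emph{Main difficulty.} The Markov iteration requires a uniform control of the form $\sup_y\bbP_y[\tau_\e^N(\rho)>T_0]\leq c<1$, where $y$ ranges over a set that carries most of the mass of $X^N_{T_0}$ uniformly in $N$. This is the delicate point and rests on the uniform moment bound \eqref{eq:moment.2} together with the loss-of-memory and large-deviation estimates recalled in Section~\ref{sec:initial}; these ensure that, regardless of a bounded starting configuration, $X^N$ enters the target ball with probability bounded below in bounded time, uniformly in $N$. Once the tail bound is secured, the three steps above combine to give both statements of the proposition.
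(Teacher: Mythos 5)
Your treatment of the almost sure convergence matches the paper almost exactly: the sandwich of $\tau_\e^N(\rho)\wedge T$ between $\tau_\e(\rho+\de)\wedge T$ and $\tau_\e(\rho-\de)\wedge T$ using Theorem~\ref{th:conv.spde}, then the removal of $\wedge T$ via $\tau_\e(\rho)<\infty$ a.s., and finally the Fubini--Tonelli argument identifying the Lebesgue-null exceptional set of radii where the monotone map $\rho\mapsto\tau_\e(\rho)$ can jump with positive probability. That part is fine.

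Where you diverge is the convergence of expectations, and you are right to be uneasy: the paper closes with ``by dominated convergence,'' but the sandwich only yields a dominating bound $\tau_\e^N(\rho)\le\tau_\e(\rho-\de)$ for $N\ge N_0(\omega)$ with a random threshold, which is exactly what you note when you say the sandwich ``does not give an integrable majorant for $\tau_\e^N(\rho)$ itself.'' Your proposed fix --- extract a uniform-in-$N$ exponential tail $\bbP[\tau_\e^N(\rho)>kT_0]\le c^k$ via the strong Markov property --- is a different, more elaborate route than what the paper writes, and in principle it would deliver the needed uniform integrability. However, the key step in that route, namely the uniform-over-starting-points bound $\sup_y\bbP_y[\tau_\e^N(\rho)>T_0]\le c<1$ with $c$ and $T_0$ independent of $N$, is left unproven. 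You invoke ``loss-of-memory and large-deviation estimates'' from Section~\ref{sec:initial}, but those are asymptotic in $\e$ (Lemma~\ref{lem:ldp} and Proposition~\ref{prop:initial} control errors of size $e^{-C/\e}$ as $\e\to0$); they do not, as stated, supply a uniform-in-$N$ recurrence estimate for fixed $\e$. So there is a genuine gap in your proposal at precisely the point you flag as delicate. For what it is worth, in the logic of the paper the uniform integrability is ultimately supplied not inside this proposition but by Proposition~\ref{prop:expect}, which produces uniformly-in-$N$ bounded expectations $\bbE_{\phi^N_{l_0}}[\tau_\e^N]$; when Proposition~\ref{prop:conv.time} is actually used in the proof of Theorem~\ref{th:main}, that uniform bound is already available and ``dominated convergence'' can be read as uniform integrability from those bounds. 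As a self-contained proof of the proposition under only the stated hypotheses, both the paper's one-liner and your sketch leave the same hole; you have diagnosed it correctly but not closed it.
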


\begin{proof}For the sake of simplicity we omit $\e$ in the proof.
First we prove that for all $\de>0$, $T>0$, we have
\be\label{eq:time.2b}
\tau(\rho+\de)\wedge T
\ieg \liminf_{N\to\infty}\tau^N(\rho)\wedge T
\ieg \limsup_{N\to\infty}\tau^N(\rho)\wedge T
\ieg\tau(\rho-\de)\wedge T
\text{ a.s.}
\ee

From Theorem \ref{th:conv.spde},
$\norm{u^N-u}_{\infty,T}$ converges to $0$  almost surely.
Therefore with probability $1$, there exists $N_0(\om)$ such that for all $N\seg N_0(\om)$
\begin{align}\label{eq:time.4}
\sup_{t\in[0,T]}\norm{u^N(t)-u(t)}_{\infty}(\om)
&<\frac{\de}2&\text{and}&&
\norm{\phi^N-\phi}_{\infty}&<\frac{\de}2.
\end{align}
Then for $t\ieg \tau(\rho+\de)\wedge T$ and $N\seg N_0(\om)$, using the triangle inequality we get
\begin{align}\label{eq:time.5}\nonumber
\rho+\de
\ieg\norm{u(t)-\phi}_{\infty}
&\ieg\norm{u(t)-u^N(t)}_{\infty}+\norm{u^N(t)-\phi^N}_{\infty}+\norm{u^N_f-\phi}_{\infty}\\
&\ieg \de+\norm{u^N(t)-\phi^N}_{\infty}
\end{align}
which means that $t\ieg \tau^N(\rho)\wedge T$. Thus, we obtain
$\tau(\rho+\de)\wedge T\ieg\liminf_{N\to\infty}[\tau^N(\rho)\wedge T]$ almost surely.
By the same arguments for $t\ieg \tau^N(\rho)\wedge T$ and $N\seg N_0(\om)$, we get
\begin{align}\label{eq:time.7}
\rho
\ieg\norm{u^N(t)-\phi^N}_{\infty}
&\ieg \de+\norm{u(t)-\phi}_{\infty}.
\end{align}
Therefore $\limsup_{N\to\infty}[\tau^N(\rho)\wedge T]\ieg\tau(\rho-\de)\wedge T$
which proves the inequality \eqref{eq:time.1b}.

From the definitions of $\tau(\rho)$ and $\tau^N(\rho)$, the functions $\rho\mapsto\tau(\rho)$ and $\rho\mapsto\tau^N(\rho)$ are left continuous and have right limits.
Then using the fact that $\tau(\rho)$ is finite almost surely, we get
\be\label{eq:time.9}
\tau(\rho^+)
\ieg \liminf_{N\to\infty}\tau^N(\rho)
\ieg \limsup_{N\to\infty}\tau^N(\rho)
\ieg\tau(\rho)<+\infty\text{ a.s.}
\ee
where $\tau(\rho^+)=\lim_{\de\to 0^+}\tau(\rho+\de)$.

At a point of continuity of $\rho\mapsto\tau(\rho)$, we obtain $\tau(\rho)= \lim_{N\to\infty}\tau^N(\rho)$.
Let us fix $\rho_1>0$. There exists $\ccN\subset\Om$ a null set such that for $\om\notin \ccN$,  $\rho\mapsto\tau(\rho)(\om)$ is bounded, decreasing, left continuous on $[\rho_1,+\infty[$. We define the set of discontinuities, $\ccP$:
\begin{align}\label{eq:time.11}
\ccP&=\acc{(\om,\rho)\in\ccN^c\times[\rho_1,+\infty[, \tau(\rho^+)(\om)\neq\tau(\rho)(\om)}\subset\Om\times\bbR.
\end{align}

Then we consider the projection $\Pi^{\bbR}_{\om}$ from $\Om\times\bbR$ on $\bbR$ along $\acc{\om}\times\bbR$. For $\om\in\ccN^c$ we define
\be\label{eq:time.11b}
\cD(\om)=\Pi^{\bbR}_{\om}(\ccP)
=\acc{\rho\in[\rho_1,+\infty[, \tau(\rho^+)(\om)\neq\tau(\rho)(\om)}
\subset \bbR.
\ee
$\cD(\om)$ is at most countable since $\rho\mapsto\tau(\rho)(\om)$ is a bounded decreasing function. 

We define $\cN(\rho)=\Pi^{\Om}_{\rho}(\ccP)$ with $\Pi^{\Om}_{\rho}$ the projection from $\Om\times\bbR$ on $\Om$ along $\Om\times\{\rho\}$. $\cN(\rho)$ is the set of $\Om$ for which $\tau(\rho)$ is not continuous at $\rho$. Therefore, we have
\be\label{eq:time.12b}
\ccP=\cup_{\om\in \Om}\acc{\om}\times\cD(\om)
=\cup_{\rho>\rho_1}\cN(\rho)\times\acc{\rho}.
\ee
Then, using Fubini-Tonelli Theorem
\be\label{eq:time.13}
\int_{\rho_1}^{+\infty}\bbP[\cN(\rho)]\dint{\rho}
=\int_{\Om}\int_{\rho_1}^{+\infty}\1_{\ccP}(\om,\rho)\dint{\rho}\dint \bbP(\om)
=\int_{\Om}\int_{\rho_1}^{+\infty}\1_{\cD(\om)}(\rho)\dint{\rho}\dint \bbP(\om)
=0.
\ee
We get a null set $\cE(\rho_1)$ on $[\rho_1,+\infty[$ such that
$\bbP[\cN(\rho)]=0$ for all $\rho\in \cE(\rho_1)$ i.e. the convergence is almost sure.
To conclude, we consider a sequence $(\rho_n)_{n\seg 0}$ converging to $0$, then $\cE=\cup_{n\seg 0}\cE(\rho_n)$ is a null set of $\bbR$ on which the convergence is almost sure.

By using dominated convergence, we obtain the convergence of the expectations.
\end{proof}

\section{Initial condition}\label{sec:initial}


\subsection{Large Deviation Control}\label{ssec:ld}

For $0<\al<1$, we set $C^{\al}([0,1])$ the set of $\al$-H\"older continuous functions on $[0,1]$ equipped with the norm $\norm{\cdot}_{C^{\al}}$
\begin{align}\label{eq:holder.1}
\norm{f}_{C^{\al}}&=\norm{f}_{\infty}+\sup_{x, y}\frac{\abs{f(x)-f(y)}}{\abs{x-y}^{\al}}.
\end{align}
We also define $D^{\al}([0,1])$ the separable subset of this H\"older space which is the closure of $C^{\infty}$ in $C^{\al}$.


Let $0< \al<\frac12$ and $\rho>0$, we consider the neighborhood $B_{\rho}^{\al}(\phi)$ of $\phi\in D^{\al}_{bc}([0,1])$
\be
B_{\rho}^{\al}(\phi)
=\acc{\psi\in D_{bc}^{\al}([0,1]), \norm{\phi-\psi}_{C^{\al}}<\rho}.
\ee
We also have $B_{\rho}^{\al}(\cM_l)=\cup_{\phi\in\cM_l}B_{\rho}^{\al}(\phi)$.

With this large deviation principle, Chenal and Millet \cite{chenal.millet97} derive exponential asymptotic estimates for the exit time of domains with a unique stable stationary point. Using their evaluations and the procedure developed by Freidlin-Wentzell \cite{freidlin.wentzell84} in the finite dimensional case, we have the following result.
\begin{lemma}[\cite{chenal.millet97}]\label{lem:ldp}
For $0<\al<\frac12$, there exists $\rho_0$ such that for all $\rho<\rho_0$, we have for all $\phi\in B_{\rho}^{\al}(\phi_{l_0})$ and $\eta>0$
\begin{align}\label{eq:ldp.1}
\lim_{\e\to0}\bbP_{\phi}\cro{\exp\pare{\e^{-1}(\wh S+\eta)}>\tau_{\e}(B_{\rho}^{\al}(\cM_l))>\exp\pare{\e^{-1}(\wh S-\eta)}}&=1,
\end{align}
where $\wh S=\wh S(\phi_{l_0},\cM_l)$.
Let $\tau_{\e}=\tau_{\e}(B_{\rho}^{\al}(\cM_l))$. Then
\be\label{eq:ldp.2}
\frac{\tau_{\e}}{\bbE_{\phi}\cro{\tau_{\e}}}
\lawconv{\e}{0}
\cE
\ee
where $\cE$ is an exponential variable of parameter $1$.
Moreover for all $\phi\in B_{\rho}^{\al}(\phi_{l_0})$
\begin{align}\label{eq:ldp.3a}
\lim_{\e\to0}\e\log\bbE_{\phi}\cro{\tau_{\e}}&=\wh S&\text{ and }&&
\lim_{\e\to0}\e\log\bbE_{\phi}\cro{\tau_{\e}^2}&=2\wh S.
\end{align}
\end{lemma}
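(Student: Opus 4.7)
The plan is to derive the three assertions by transferring the finite-dimensional Freidlin--Wentzell programme to the SPDE setting, using the sample-path large deviation principle of Chenal and Millet on $C([0,T],D^{\al}_{bc}([0,1]))$ as input. This LDP has good rate function
\be
I_T^{\phi}(\vf)=\frac14\int_0^T\int_0^1\pare{\del_t\vf-\g\del_{xx}\vf+V'(\vf)}^2\dx\dt
\ee
(with $\vf(0)=\phi$ and sufficient regularity; $+\infty$ otherwise). The gradient form $\del_tu=-\grad S(u)+\sqrt{2\e}W$ together with the completion-of-the-square identity $\int_0^T\int_0^1(\del_t\vf+\grad S(\vf))^2\dx\dt=\int_0^T\int_0^1(\del_t\vf-\grad S(\vf))^2\dx\dt+4(S(\vf(T))-S(\vf(0)))$ identifies the quasi-potential
\be
\inf\acc{I_T^{\phi_{l_0}}(\vf):T>0,\;\vf(T)\in\cM_l}
\ee
with the effective barrier $\wh S=\wh S(\phi_{l_0},\cM_l)$ appearing in the statement; the optimal ascending path follows the time-reversed deterministic flow from $\phi_{l_0}$ up to an admissible saddle and the forward flow from there into $\cM_l$.

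For the upper bound in \eqref{eq:ldp.1}, fix $\eta>0$, pick a near-optimal path from $\phi_{l_0}$ through an admissible saddle into $\cB^{\al}_{\rho}(\cM_l)$, and build a tube of small $C^{\al}$-radius around it over a window of fixed length $T$. The LDP lower bound gives, uniformly for starting points in $B^{\al}_{\rho}(\phi_{l_0})$, a leaving probability through this tube on one window of at least $\exp(-(\wh S+\eta/2)/\e)$. Iterating with the strong Markov property at times spaced by $T$, interleaved with a short relaxation window that re-attracts the process to $\phi_{l_0}$ via the deterministic flow, yields $\tau_\e\le\exp((\wh S+\eta)/\e)$ with probability tending to $1$. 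For the lower bound, any escape before $\exp((\wh S-\eta)/\e)$ must contain an excursion of duration $\le T$ from a small neighborhood of $\phi_{l_0}$ to $\cB^{\al}_{\rho}(\cM_l)$; every such excursion has action at least $\wh S-\eta/2$ by definition of the quasi-potential, and compactness of the level sets of $I_T^{\phi}$ makes the LDP upper bound uniform in initial condition. A union bound over the at most $\exp((\wh S-\eta)/\e)/T$ consecutive windows gives the conclusion.

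The exponential law \eqref{eq:ldp.2} follows by a standard renewal / regeneration argument. On time blocks of length $T_\e$ with $T_\e\gg1$ and $T_\e\ll e^{\wh S/\e}$, the process either exits or returns to a small $H^1$-neighborhood of $\phi_{l_0}$ with probability tending to $1$ (again a consequence of attractivity plus LDP control on the return), so the per-block exit probability is asymptotically a constant $p_\e$ with $\log p_\e\sim-\wh S/\e$; hence $\tau_\e/T_\e$ is asymptotically geometric of parameter $p_\e$, and normalization by $\bbE_\phi[\tau_\e]\sim T_\e/p_\e$ produces the exponential limit. The logarithmic moment identities \eqref{eq:ldp.3a} follow by integrating the tail estimates: writing $\bbE_\phi\cro{\tau_\e^k}=\int_0^\infty kt^{k-1}\bbP_\phi[\tau_\e>t]\dt$ for $k=1,2$ and splitting the integral at $\exp((\wh S\pm\eta)/\e)$, the first assertion controls both halves, with the doubled exponent in the second moment absorbed into the bounds $\wh S\pm\eta$.

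The main obstacle is the uniform-in-initial-condition control of the process inside the basin of attraction of $\phi_{l_0}$: in infinite dimensions the ambient space is not locally compact, so the Freidlin--Wentzell continuity estimates do not transfer verbatim. The remedy is to exploit the smoothing property of the heat semigroup together with the H\"older regularity of mild solutions (Proposition \ref{prop:sol}) to confine the process into a precompact subset of $D^{\al}_{bc}$ with high probability after time $O(1)$, combined with the exponential attraction of the deterministic flow toward $\phi_{l_0}$. This is precisely the technical input Chenal and Millet provide, and once it is in place the scheme above proceeds essentially as in the finite-dimensional case.
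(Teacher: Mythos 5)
The paper offers no proof of this lemma: it is stated as a citation to Chenal and Millet \cite{chenal.millet97}, combined with the Freidlin--Wentzell exit-time machinery \cite{freidlin.wentzell84}, exactly as the surrounding text indicates. Your sketch is a faithful reconstruction of precisely that route (uniform sample-path LDP, completion of the square identifying the quasi-potential with the communication height, tube and strong-Markov iteration for the two-sided bounds, and a renewal/geometric argument for the exponential law and the two logarithmic moment asymptotics), so there is nothing in the paper to contrast it with and the approach is the intended one.
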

These estimates are the infinite dimensional version of the Freidlin-Went\-zell theory. 

\subsection{Exponential Contractivity}\label{ssec:contractivity}

For a given $\psi^N=(\psi_1,\cdots,\psi_N)\in \bbR^N$, we consider equivalently the point in $\bbR^N$ and the function in $C([0,1])$ obtained by the linear interpolation between the points $(x_i,\psi_i)$.
Reciprocally, for $\psi\in C_{bc}([0,1])$, we let $\wh\psi^N$ be the linear interpolation of $\psi$ along the discretization. $\wh\psi^N$ is the linear interpolation between the points $(x_i,\psi(x_i))$.

We set
\be
B^{\infty}_{\rho}(\phi)
=\acc{\psi\in C_{bc}([0,1]),\norm{\psi-\phi}_{\infty}<\rho}.
\ee

We adapt trajectorial results of contractivity for the localized process from Martinelli and Scoppola \cite{martinelli89}. We denote $u(\phi), u_R(\phi)$ the solutions of Equation \eqref{eq:spde.1} with respectively $V'$ and $b_R$, starting from $\phi$. Accordingly, we denote $u^N(\phi^N), u^N_R(\phi^N)$ the solutions of Equation \eqref{eq:mild.4} with $V'$ and $b_R$, starting from $\phi^N\in \bbR^N$.

\begin{lemma}\label{lem:contr}
Let $\phi$ be a minimum of $S$ and $R\seg R_0$.
There exists $m,C_R>0$ and $\e_0,\rho_0>0$, such that for all $\rho<\rho_0$ and every $\psi\in B^{\infty}_{\rho}(\phi)$  we have, for all $\e_0>\e>0$
\be\label{eq:contr.1}
\bbP\cro{\sup_{N\seg N_0}\norm{u_R^N(\wh{\psi}^N)(t)-u_R^N(\wh\phi^N)(t)}_{\infty}\ieg e^{-mt}\norm{\psi-\phi}_{\infty},\forall t>0}
\seg 1-e^{-\frac{C_R}{\e}}.
\ee
\end{lemma}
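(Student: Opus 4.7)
The central idea is a synchronous coupling: since $u_R^N(\wh\psi^N)$ and $u_R^N(\wh\phi^N)$ are driven by the \emph{same} space-time white noise $W$, the stochastic convolutions cancel when we subtract the mild equations, and the difference $v^N(t) := u_R^N(\wh\psi^N)(t) - u_R^N(\wh\phi^N)(t)$ solves the random-coefficient but noise-free equation
\begin{equation*}
v^N(t) = e^{t\gamma\Delta^N} v^N(0) - \int_0^t e^{(t-s)\gamma\Delta^N}\, c^N(s)\, v^N(s) \, \dint s,
\end{equation*}
where $c^N(x,s) = \int_0^1 b_R'\bigl(u_R^N(\wh\phi^N)(x,s) + \theta\, v^N(x,s)\bigr) \dint\theta$ is uniformly bounded (since $b_R'$ is bounded). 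The contractivity will therefore be entirely driven by properties of the trajectory of $u_R^N(\wh\phi^N)$ together with the positive definiteness of $\cH_\phi S$ at the minimum.

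First I would establish a uniform local spectral gap. By Assumption \ref{hyp:S}, the smallest eigenvalue $\la_1(\phi)$ of $\cH_\phi S$ is strictly positive. Continuity of $V''$ gives, for every $u$ in a small $L^\infty$-neighborhood of $\phi$, that $-\gamma \Delta + V''(u(\cdot))$ still has smallest eigenvalue $\seg 2m := \la_1(\phi)/2$. Using the convergence of Sturm-Liouville spectra under finite-difference discretization (treated in Section \ref{sec:prefactor}), this lower bound extends to $-\gamma \Delta^N + V''(u(x_i))$ uniformly for $N\seg N_0$ and $u$ in the same neighborhood.

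Then, on the ``good event''
\begin{equation*}
G = \Bigl\{ u_R^N(\wh\phi^N)(t) \in B^\infty_{\rho'}(\phi) \text{ for every } t\seg 0 \text{ and every } N \seg N_0 \Bigr\},
\end{equation*}
with $\rho'$ chosen so that the spectral gap persists, the operator $-\gamma\Delta^N + c^N(\cdot,t)$ has gap $\seg m$ as long as $\norm{v^N(t)}_\infty$ stays small. The $L^2$ energy inequality $\tfrac{\dint}{\dint t}\norm{v^N(t)}_{L^2}^2 \ieg -2m\norm{v^N(t)}_{L^2}^2$ then yields $\norm{v^N(t)}_{L^2} \ieg e^{-mt}\norm{v^N(0)}_{L^2}$, and one bootstraps from $L^2$ to $L^\infty$ using the $L^2\to L^\infty$ smoothing of $e^{t\gamma\Delta^N}$ for $t\seg 1$ and the parabolic maximum principle for $t\in[0,1]$; a continuity argument in $\rho$ makes the smallness of $\norm{v^N}_\infty$ self-consistent.

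It remains to bound $\bbP[G^c]$. The large deviation estimates of Chenal--Millet (Lemma \ref{lem:ldp}) together with Freidlin--Wentzell-type arguments give, uniformly in $N\seg N_0$, $\bbP[u_R^N(\wh\phi^N) \text{ leaves } B^\infty_{\rho'}(\phi) \text{ before time } T_\e] \ieg e^{-C_R/\e}$ on a polynomial-in-$\e^{-1}$ horizon $T_\e$. The delicate point, and the main obstacle, is the all-time statement: one must combine these finite-horizon bounds with the local contractivity of the deterministic flow around $\phi$ (which, inside $B^\infty_{\rho'}(\phi)$, keeps trajectories confined) and absorb the ``post-$T_\e$'' regime into the exponential factor $e^{-mt}$, which has already decayed to $O(e^{-C_R/\e})$ by the time the finite-horizon estimate fails. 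This combination, together with the uniform-in-$N$ discretization estimates, is precisely the content of the adaptation from \cite{martinelli89, sbano94}.
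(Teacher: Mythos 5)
The paper does not actually give a proof of this lemma; it says only that the result ``can be proved via an adaptation of the arguments of \cite{sbano94} and \cite{martinelli89}.'' Your skeleton --- synchronous coupling to eliminate the noise from the difference equation, a uniform-in-$N$ spectral gap obtained from the nondegeneracy of $\cH_\phi S$, and a large-deviations confinement estimate for the reference trajectory --- is indeed the structure of the arguments in those references, so the framework you describe is the right one.

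The genuine gap is in your last paragraph. The event $G$ you define, that the trajectory of $u_R^N(\wh\phi^N)$ stays in $B^\infty_{\rho'}(\phi)$ for all $t\seg 0$ and all $N\seg N_0$, has probability \emph{zero}, since the localized process leaves any bounded set almost surely; you acknowledge this, but the proposed fix does not work. Saying that ``$e^{-mt}$ has already decayed to $O(e^{-C_R/\e})$'' controls the right-hand side of the desired inequality, but says nothing about the left-hand side $\norm{v^N(t)}_\infty$. Once $u_R^N(\wh\phi^N)$ escapes the basin and passes near a saddle, the linearization $\gamma\Delta^N - V''(u_R^N(\wh\phi^N)(\cdot,t))$ acquires a negative eigenvalue, and over the $O(\abs{\ln\e})$ passage past the saddle $v^N$ can grow by a factor of order $\e^{-c}$, $c>0$. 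If $\norm{v^N(t_1)}_\infty\approx e^{-mt_1}\norm{\psi-\phi}_\infty$ just before the passage, then just after, at $t_2=t_1+K\abs{\ln\e}$, the difference is of order $e^{-mt_1}\e^{-c}\norm{\psi-\phi}_\infty$, while the required bound is $e^{-mt_2}\norm{\psi-\phi}_\infty=e^{-mt_1}\e^{mK}\norm{\psi-\phi}_\infty$; the inequality is violated by a polynomial factor in $1/\e$. This is precisely the obstruction that the recursive multiscale analysis of \cite{sbano94} is designed to handle; it is not ``absorbed'' by the exponential.

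It is worth observing that the only place the paper uses Lemma \ref{lem:contr} (the proof of Proposition \ref{prop:initial}, via the event $\Om_R$ in \eqref{eq:loss.1}) invokes the bound only up to the first transition time, i.e.\ on a horizon of length $T(\e)=e^{(\wh S-\de_1)/\e}$ on which the trajectory stays in the basin of $\phi$. For that restricted horizon your synchronous-coupling plus spectral-gap argument, combined with a confinement estimate from large deviations, does suffice. The difficulty you are unable to close arises only from the literal ``$\forall t>0$'' phrasing of the lemma; for what the paper needs, your sketch is essentially adequate, but as a proof of the statement as written it has a real gap.
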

This result can be proved via an adaptation of the arguments of \cite{sbano94} and \cite{martinelli89}. 
 Lemma \ref{lem:contr} describes that the solutions of Equation \eqref{eq:spde.1} and \eqref{eq:mild.4} depend slightly on the initial condition. Moreover, the solutions starting from two functions are exponentially close uniformly in the dimension. Martinelli and Scoppola called that the loss of memory of the initial condition because the specific initial condition is not relevant for the evolution of the process.

\subsection{Uniformity in the initial condition}\label{ssec:loss}

Let us recall that $\phi_{l_0}$ is a minimum and $\cM_l$ is a set of lower minima. We denote
\begin{align}\nonumber
\tau^N_{\e}(\phi_{l_0})
&=\tau_{\e}^N(B_{\rho}^{\al}(\phi_{l_0}))
=\inf\acc{t,u^N(t)\in B_{\rho}^{\al}(\phi_{l_0})}\\
\tau^N_{\e}(\cM_l)
&=\tau_{\e}^N(B_{\rho}^{\al}(\cM_l))
=\inf\acc{t,u^N(t)\in B_{\rho}^{\al}(\cM_l)}.
\end{align}
Similarly, we denote by $\tau^{N,R}_{\e}$ the hitting time associated with the localized process $u^N_R$.
\begin{proposition}\label{prop:initial}
For all $\rho_0>\rho>0$, there exists $\eta>0$ such that for a sequence $\phi_{l_0}^N$ of minima of $S^N$, converging to $\phi_{l_0}$ in $L^2$,
\be\label{eq:initial.1}
\sup_{N\seg N_0}\sup_{\norm{\phi^N-\phi_{l_0}^N}_{\infty}<\rho}
\abs{\bbE_{\phi^N}\cro{\tau_{\e}^{N}(\cM_l)}-\bbE_{\phi_{l_0}^N}\cro{\tau_{\e}^{N}(\cM_l)}}
\ieg e^{\frac{\wh S-\eta}{\e}}.
\ee
For any sequence $\phi_i^N\in H^1$ of minima of $S^N$ converging to $\phi_i\in H^1$ in $L^2$, we also have
\be\label{eq:initial.2}
\sup_{N\seg N_0}\sup_{\norm{\phi_i^N-\phi^N}_{\infty}<\rho}
\abs{\bbP_{\phi_i^N}\cro{\tau^N_{\e}(\phi_{l_0})<\tau_{\e}^{N}(\cM_l)}
-\bbP_{\phi^N}\cro{\tau_{\e}^{N}(\phi_{l_0})<\tau_{\e}^{N}(\cM_l)}}
\ieg e^{-\frac{\eta}{\e}}.
\ee
\end{proposition}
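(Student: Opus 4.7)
The plan is to combine three uniform-in-$N$ ingredients already assembled in the paper: the contractivity estimate of Lemma \ref{lem:contr}, the large-deviation bounds of Lemma \ref{lem:ldp}, and the second-moment control from \eqref{eq:ldp.3a}. Throughout, realize the solutions started from $\phi^N$ and $\phi^N_{l_0}$ (respectively from $\phi^N_i$ and $\phi^N$) as a pair coupled by the same driving space-time white noise. The guiding principle, following Martinelli and Sbano, is that once the coupled trajectories have spent a logarithmic relaxation time together in the basin of attraction of $\phi^N_{l_0}$, their future evolutions are statistically indistinguishable at the metastable transition scale $e^{\wh S/\e}$.

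Fix $R \geq R_0$ large enough that all relevant stationary points and neighborhoods lie in $\{\|\cdot\|_\infty \leq R-1\}$. Set $T_\e = K|\log \e|$ with $K$ large and $\hat T_\e = e^{(\wh S + \eta')/\e}$ with $\eta'>0$ small. Define $\Om_{\mathrm{good}}$ as the intersection of (a) the contractivity bound of Lemma \ref{lem:contr} for the coupled localized pair, so that $\|u^N_R(\wh\phi^N)(t) - u^N_R(\wh\phi^N_{l_0})(t)\|_\infty \leq e^{-mt}\rho$ for all $t\geq 0$ uniformly in $N$, and (b) the coincidence $u^N \equiv u^N_R$ on $[0,\hat T_\e]$ for both initial conditions. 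Event (a) has probability $\geq 1-e^{-C_R/\e}$ by Lemma \ref{lem:contr}. For $R$ sufficiently large, event (b) has probability $\geq 1-e^{-C_R/\e}$ by a union bound over unit-length sub-intervals of $[0,\hat T_\e]$ combined with the uniform-in-$N$ large-deviation cost to exceed level $R$ on a unit time interval (Lemma \ref{lem:ldp}). On $\Om_{\mathrm{good}}$, for every $t \geq T_\e$ the two coupled trajectories are $\e^{mK}\rho$-close in $L^\infty$, which by interpolation with the uniform-in-$N$ $C^{\al'}$-regularity ($\al' > \al$) of the discrete mild solutions yields $\e^c$-closeness in $C^\al$ for some $c>0$.

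For \eqref{eq:initial.1}, apply the strong Markov property at time $T_\e$: starting from two $\e^c$-close random functions in $C^\al$, the rapid equilibration of the process on the boundary of an auxiliary basin $A$ around $\phi^N_{l_0}$, together with the asymptotic exponentiality of the hitting time (Lemma \ref{lem:ldp}), forces the subsequent expected hitting times of $B^\al_\rho(\cM_l)$ to differ only by a logarithmic correction. Hence $|\bbE_{\phi^N}[\tau^N_\e(\cM_l)\1_{\Om_{\mathrm{good}}}] - \bbE_{\phi^N_{l_0}}[\tau^N_\e(\cM_l)\1_{\Om_{\mathrm{good}}}]| = O(|\log\e|)$, negligible compared to $e^{(\wh S-\eta)/\e}$. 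On the complement $\Om_{\mathrm{good}}^c$ (probability $\leq 2e^{-C_R/\e}$), Cauchy--Schwarz together with \eqref{eq:ldp.3a} gives
\[
\bbE\cro{\tau^N_\e(\cM_l)\1_{\Om_{\mathrm{good}}^c}} \leq \sqrt{\bbE\cro{(\tau^N_\e(\cM_l))^2}\,\bbP\cro{\Om_{\mathrm{good}}^c}} \leq e^{(\wh S+\eta'-C_R/2)/\e},
\]
which is $\leq e^{(\wh S-\eta)/\e}$ once $R$ is chosen so that $C_R \geq 2\eta + 2\eta'$. For \eqref{eq:initial.2}, the same coupling makes the events $\{\tau^N_\e(\phi_{l_0})<\tau^N_\e(\cM_l)\}$ started from $\phi^N_i$ and from a nearby $\phi^N$ coincide on $\Om_{\mathrm{good}}$, apart from trajectories that spend the decisive moment in an $\e^c$-thin boundary layer of the targets; the probability of such near-boundary hits is $O(e^{-\eta/\e})$ by another large-deviation argument, while $\Om_{\mathrm{good}}^c$ contributes directly $\leq 2e^{-C_R/\e}$.

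The main technical obstacle is maintaining uniformity in $N$ throughout every step. Lemma \ref{lem:contr} and the adaptation of Chenal--Millet provide this uniformity for the two key stochastic estimates, but the rapid-equilibration argument underlying the $O(|\log\e|)$ bound on $\Om_{\mathrm{good}}$ must be carried out in a dimension-free manner, adapting the Martinelli-Sbano strategy to the discretized system. The other delicate point is the transfer from $L^\infty$- to $C^\al$-closeness, needed because the target neighborhoods $B^\al_\rho$ are H\"older balls; this rests on uniform-in-$N$ H\"older regularity of the mild solutions $u^N$, which is inherited from the estimates on the discrete semigroup $g^N$ that underlie Theorem \ref{th:conv.spde}.
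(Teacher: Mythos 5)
Your proposal uses the right ingredients (the contractivity of Lemma \ref{lem:contr}, the large-deviation bounds of Lemma \ref{lem:ldp}, a good/bad-event split, and Cauchy--Schwarz on the bad set), but there is a genuine gap at the decisive step, and the place at which you invoke the Markov property is different from the paper's.

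The gap: you apply the strong Markov property at the deterministic time $T_\e = K|\log\e|$, at which point the two coupled trajectories are $\e^c$-close near $\phi_{l_0}$, and then assert that from two such nearby configurations the subsequent expected hitting times of $B^\al_\rho(\cM_l)$ differ by only $O(|\log\e|)$, attributing this to ``rapid equilibration'' and the asymptotic exponentiality of Lemma \ref{lem:ldp}. But that assertion is not a consequence of anything you have established --- it is essentially the statement being proved, restated for the post-$T_\e$ process. Neither the exponential convergence in law \eqref{eq:ldp.2} nor the logarithmic asymptotics \eqref{eq:ldp.3a} quantifies how the expected hitting time depends on small perturbations of the initial condition, which is exactly what \eqref{eq:initial.1} asserts. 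Nothing in your argument closes this loop.

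The paper avoids this circularity by decoupling at a \emph{random} time, not at $T_\e$. It looks at $\s^N(\phi_{l_0})$, the time at which the $\phi_{l_0}$-started trajectory first hits $\cB_\rho(\cM_l)$. On $\Om_R \cap \{\s^N(\phi_{l_0}) > T(\e)\}$ the contraction has run long enough that the $\phi$-started trajectory is within $\rho$ of the $\phi_{l_0}$-started one, hence within $2\rho$ of the target minimum $\phi_i$. From there the paper does \emph{not} re-invoke metastable estimates; it instead uses the non-degeneracy of $\phi_i$ (Equation \eqref{eq:derive.1}) to show the deterministic flow from that configuration enters $\cB_{\rho/2}(\phi_i)$ within a fixed time $t_0 = b^{-1}\ln 16$, and a finite-time large-deviation comparison between the deterministic and stochastic flows to conclude $(\s^N(\phi)-\s^N(\phi_{l_0}))_+ \le 2t_0$ with probability $\ge 1-e^{-C/\e}$. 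This converts ``$L^\infty$-close trajectories'' into ``hitting-time difference bounded by a constant'' without appealing to anything metastable, which is the step your argument is missing. The remaining bookkeeping (Cauchy--Schwarz with the second-moment bound \eqref{eq:ldp.3a} on the bad event, then delocalization by choosing $R$ so that escaping $B^\infty_R(0)$ costs more than $\wh S+1$) matches your outline. A secondary point: the paper never needs the $L^\infty \to C^\al$ interpolation you invoke; working at $\s^N(\phi_{l_0})$ and using the $L^2$-contraction \eqref{eq:derive.1} near $\phi_i$ sidesteps the H\"older-ball geometry entirely.
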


The proof comes from a comparison between the deterministic process (i.e. $\e=0$) and the stochastic process starting from the moment of the hitting time .

\begin{proof}
Since the minima are not degenerate, we can assume $\rho$ small enough to get 
\be\label{eq:derive.1}
\bra{\frechet{S}{\phi}{}\phi,\phi-\phi_i}_{L^2}\ieg-b\norm{\phi-\phi_i}^2_{L^2}.
\ee
for some $b>0$, all $1<i<l$, and all $\phi\in \cB_{2\rho}(\phi_i)$.

First, let us prove similar estimates on the expectations of transition times for the localized process $u^N_R$. We denote by $\s^N(\phi^N)$ the hitting time $\tau_\e^{N,R}(\cM_l)$ for the process $u^N_R$ starting from $\phi^N$.
We set
\be\label{eq:loss.1}
\Om_{R}=\acc{\sup_{N\seg N_0}\sup_{\norm{u_0-\phi}_{\infty}<\rho}\norm{u^R_N(u_0)(t)-u^R_N(\phi)(t)}_{\infty}\ieg \rho e^{-mt},\forall t>0}.
\ee
From Proposition \ref{lem:contr}, we get $\bbP(\Om_{R})>1-e^{-C_R/\e}$.

Let us fix $\de_1>0$. We define $T(\e)=e^{\frac{\wh S-\de_1}{\e}}$ and we take $\e<\e_0$ such that $e^{-mT(\e)}<\rho$. On the set $\acc{\s^N(\phi_{l_0})>T(\e)}$, setting $\psi=u^{R}_N(\phi)(\s^N(\phi_{l_0}))$, we get
\be\label{eq:loss.3}
\norm{\psi-u^{R}_N(\phi_{l_0})(\s^N(\phi_{l_0}))}_{\infty}
<e^{-mT(\e)}<\rho
\ee
with probability at least $1-e^{-C_R/\e}$.
Let us suppose that $\s^N(\phi)-\s^N(\phi_{l_0})\seg0$ and that $u^{R}_N(\phi_{l_0})(\s^N(\phi_{l_0}))\in \cB_{\rho}(\phi_i)$. 

The deterministic process $u^{N,0}_R$ is the solution of \eqref{eq:mild.4} for the drift $b_R$ and $\e=0$. $\phi_i^N$ is a minimum of $S^N$, so $\phi_i^N$ is an equilibrium point of $u^{N,0}_R$.
Then using Equation \eqref{eq:derive.1}, we get for $t\seg0$
\be\label{eq:loss.4}
\norm{u^{N,0}_R(\psi)(t)-\phi_i}^2_{L^2}\ieg e^{-bt}\norm{\psi-\phi_i}^2_{L^2}\ieg e^{-bt}(e^{-mt}\rho+\rho)^2\ieg 4\rho^2e^{-bt}
\ee
by the triangle inequality.
For $t>t_0=\frac1{b}\ln(16)$, we obtain $\norm{u^{N,0}_R(\psi)(t)-\phi_i}_{L^2}\ieg \frac{\rho}{2}$.

From the large deviation principle, we can compare the deterministic solution with the perturbed one. We obtain $C>0$ such that
\be\label{eq:loss.5}
\bbP\cro{\acc{\norm{u^{N,0}_R(\psi^N)-u^{N}_R(\psi^N)}_{\infty,2t_0}<\frac{\rho}3}}
\seg 1-e^{-C/\e}.
\ee
Therefore, with probability at least $1-e^{-C/\e}-e^{-C_R/\e}$, we get 
$\norm{u^{N}_R(\psi)(2t_0)-\phi_i}_{L^2}<\frac{5\rho}{6}$ which implies
\be\label{eq:loss.7}
(\s^N(\phi)-\s^N(\phi_{l_0}))_+\ieg 2t_0.
\ee

We proceed similarly if $\s^N(\phi)-\s^N(\phi_{l_0})\ieg0$. In this case, we stop the process at $\s^N(\phi)$. Finally we get
$\abs{\s^N(\phi)-\s^N(\phi_{l_0})}\ieg 2t_0$ with probability at least $1-e^{-C'/\e}$, for some $C'>0$.

We obtain
\begin{align}\label{eq:loss.9}\nonumber
\bbE\cro{\abs{\s^N(\phi)-\s^N(\phi_{l_0})}}
&\ieg\bbE\cro{\abs{\s^N(\phi)-\s^N(\phi_{l_0})}\1_{\Om_{R}}\1_{\acc{\s^N(\phi_{l_0})>T(\e)}}}\\\nonumber
&\quad+\bbE\cro{\abs{\s^N(\phi)-\s^N(\phi_{l_0})}(\1_{\Om_{R}^c}+\1_{\acc{\s^N(\phi_{l_0})>T(\e)}^c})}\\
&\ieg2t_0(1-e^{-C'/\e})\bbP\cro{\Om_{R}\cap\acc{\s^N(\phi_{l_0})>T(\e)}}\\\nonumber
&\quad+\bbE\cro{\abs{\s^N(\phi)-\s^N(\phi_{l_0})}^2}^{\frac12}(\bbP[\Om_{R}^c]^{\frac12}+\bbP\cro{\acc{\s^N(\phi_{l_0})\ieg T(\e)}}^{\frac12}).
\end{align}
By using Proposition \ref{lem:contr}, we have $\bbP[\Om_{R}^c]<e^{-C_R/\e}$.
From Proposition \ref{lem:ldp}, we deduce that for $\e\ieg\e_0$
\be\label{eq:loss.11}
\bbP[\s^N(\phi_{l_0})\ieg T(\e)]<1-e^{-e^{-\frac{\de_1}{\e}}}<e^{-\frac{\de_1}{\e}}.
\ee
Moreover, we have for all $\de_2>0$
\be\label{eq:loss.12}
\bbE\cro{\abs{\s^N(\phi)-\s^N(\phi_{l_0})}^2}<e^{2\frac{\wh S+\de_2}{\e}}.
\ee
So we finally get
\begin{align}\label{eq:loss.13}
\bbE[\abs{\s^N(\phi)-\s^N(\phi_{l_0})}]
&\ieg2t_0(1-e^{-C_R/\e}-e^{-\frac{\de_1}{\e}})+e^{\frac{\wh S+\de_2}{\e}}(e^{-C/2\e}+e^{-\frac{\de_1}{2\e}})\ieg e^{\frac{\wh S-\eta}{\e}}.
\end{align}
By choosing $\de_1,\de_2$ and $\eta$ small enough, we prove the proposition for the localized process.

Let us now choose $R$ such that $\wh S(B^{\infty}_R(0),B^{\infty}_{\rho}(\phi_{l_0}))>\wh S+1$, then from Proposition \ref{lem:ldp}, we have 
\begin{align}\label{eq:loss.14}
\sup_{\phi\in B^{\infty}_{\rho}(\phi_{l_0})}\bbP_{\phi}[\tau_{\e}(B^{\infty}_R(0))\ieg \exp((\wh S+1-\de_3)/\e)=T_2(\e)]&\ieg e^{-C/\e}\\
\sup_{\phi\in B^{\infty}_{\rho}(\phi_{l_0})}\bbP_{\phi}[\tau_{\e}^{N}(\cM_l)\seg T_2(\e)]&\ieg e^{-C/\e}.
\end{align}
We consider the process $u$ starting from $\phi$ and $\phi_{l_0}$. Before $T_2(\e)$, with high probability, the processes are in $B^{\infty}_R(0)$ and coincide with $u_R$ up to this time. Moreover $T_2(\e)$ is much larger than the transition time, so the transition already occurs when the processes reach $B^{\infty}_R(0)^c$. Therefore, with very high probability, the transition time for the localized process is exactly the correct transition time.

For Equation \eqref{eq:initial.2}, we follow a similar method, by using Proposition \ref{lem:contr} for the localized process and then comparing the deterministic and stochastic processes in the neighborhood of a minimum.
\end{proof}

\section{Approximation of the potential}\label{sec:prefactor}

In this section, we prove (or refer to) results about the convergence of the potential and its related quantities. 

\subsection{Convergence of the potential}

Let us recall from Section \ref{ssec:contractivity} that for a point $u^N\in\bbR^N$, we denote also by $u^N$ the linear interpolation between the points $(x_i,u^N_i)$. For a function $u\in C_{bc}([0,1])$, we denote by $\wh u^N$ the linear interpolation between the points $(x_i,u(x_i))$.
We say that the sequence $u^N\in\bbR^N$ converges to $u\in H^1$ if the sequence of linear interpolations associated to $u^N$ (also denoted $u^N$) converges to $u$ in the $H^1$ norm.

Let us recall that $H S^N(u^N)$ is the Hessian matrix of $S^N$ at $u^N$ and can be interpreted as a bilinear form.
We prove the following proposition.
\begin{proposition}\label{prop:conv.potentiel}
For any sequence $u^N\in \bbR^N$ converging to $u\in H^1$, we have
\begin{itemize}
\item $S^N(u^N)\conv{N}{\infty}S(u)<\infty$
\item for any sequence $h^N$ converging to $h$: 
$\grad S^N(u^N)\cdot h^N\conv{N}{\infty}D_u S(h)$
\item for any sequences $h^N,k^N$ converging to $h,k$:
$$H S^N(u^N) (h^N,k^N)\conv{N}{\infty}D^2_u S(h,k).$$
\end{itemize}
If $u$ is twice differentiable $D_u S(h)=\int_0^1\frechet{S}{\phi}{}(u)h$ and if $k$ is twice differentiable $D^2_u S(h,k)=\int_0^1h\cH_uS k$.
\end{proposition}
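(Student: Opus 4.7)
My plan is to exploit the fact that, after identification with its piecewise linear interpolation, the discretized potential $S^N(u^N)$ is \emph{exactly} the $H^1$ energy for the kinetic part and only a Riemann sum for the potential part. Concretely, if $u^N$ denotes both the vector $(y_i)_{0\ieg i\ieg N+1}$ (where $y_0,y_{N+1}$ are fixed by the boundary conditions) and its piecewise linear interpolation, then $(u^N)'$ is piecewise constant equal to $(y_{i+1}-y_i)/h_N$ on $[x_i,x_{i+1}]$, so
\be\label{eq:prop:k-identity}
\int_0^1 \frac{\g}{2}\abs{(u^N)'(x)}^2\dx \;=\; h_N\sum_{i=0}^{N}\frac{\g}{2h_N^2}(y_{i+1}-y_i)^2.
\ee
The first step is to use this identity together with the fact that, by hypothesis, $u^N\to u$ in $H^1$; continuity of the $L^2$ norm gives $\int ((u^N)')^2\to\int (u')^2$. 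For the potential term, the Sobolev embedding $H^1([0,1])\hookrightarrow C^{\al}([0,1])$ yields $\norm{u^N-u}_{\infty}\to 0$, and since $V$ is $C^3$ on a bounded set of $\bbR$ containing the ranges of the $u^N$, the composition $V\circ u^N\to V\circ u$ uniformly. Comparing the Riemann sum $h_N\sum_i V(y_i)=h_N\sum_i V(u^N(x_i))$ with $\int V(u^N)\dx$ introduces an error of order $h_N\cdot\omega(h_N)$ where $\omega$ is the modulus of continuity of $V\circ u^N$ (uniform in $N$), hence negligible. This gives the first bullet.

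For the gradient, an explicit computation gives
\[
\frac{\del S^N}{\del y_i}(u^N)=\frac{\g}{h_N}\bigl(2y_i-y_{i+1}-y_{i-1}\bigr)+h_N V'(y_i),
\]
so $\grad S^N(u^N)\cdot h^N$ decomposes into a discrete Dirichlet form plus a Riemann sum. A discrete integration by parts, respecting the boundary conventions \eqref{eq:bc.dN}--\eqref{eq:bc.nN}, rewrites the first sum as
\be
\frac{\g}{h_N}\sum_{i=0}^{N}(y_{i+1}-y_i)(h^N_{i+1}-h^N_i)\;=\;\g\int_0^1 (u^N)'(x)(h^N)'(x)\dx,
\eea
again using the piecewise linearity. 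The $H^1$ convergence $u^N\to u$ and $h^N\to h$ then yields convergence of this inner product to $\g\int u'h'\dx$, while the $V'$ Riemann sum converges to $\int V'(u)h\dx$ by the same argument as above; summing gives the integrated Taylor formula \eqref{eq:potential.2}, namely $D_uS(h)=\int[\g u'h'+V'(u)h]\dx$. If $u\in C^2_{bc}$ one applies integration by parts to recover the formulation $\int \delta_\phi S(u)\,h\,\dx$; here the Dirichlet/Neumann boundary conditions make the boundary terms vanish.

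The Hessian case is structurally identical: the mixed partial $\del^2 S^N/\del y_i\del y_j$ splits into the discrete Laplacian matrix times $\g/h_N$ and a diagonal $h_N V''(y_i)\de_{ij}$. The same discrete integration by parts produces $\g\int (h^N)'(k^N)'\dx$ exactly, and the diagonal sum is a Riemann sum for $\int V''(u)hk\dx$; convergence follows from the $H^1$ and uniform convergence of $u^N,h^N,k^N$ combined with continuity of $V''$ on a bounded set. Integration by parts when $k\in C^2_{bc}$ then identifies the limit with $\int h\,\cH_uSk\,\dx$. The main (modest) obstacle is the bookkeeping of boundary terms in the discrete summation by parts: one must check that the Dirichlet conventions $y_0=y_{N+1}=0$ and the Neumann conventions $y_0=y_1$, $y_{N+1}=y_N$ match, respectively, the vanishing of the interpolated function at the endpoints and the vanishing of the piecewise constant derivative on the half-intervals of length $h_N/2$ at the boundary of $[0,1]$ (the reason for the shifted nodes in \eqref{eq:points.n}); once this is checked, no boundary contributions survive and the argument above is straightforward.
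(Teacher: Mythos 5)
Your proposal is correct and follows essentially the same route as the paper: both proofs use the exact identity between the discrete kinetic sum and the $L^2$ energy of the piecewise linear interpolant, the uniform (Sobolev-embedding) convergence of $u^N$ to $u$ for the potential/Riemann-sum term, and the discrete summation by parts adapted to the boundary conventions for $\grad S^N\cdot h^N$ and the Hessian; the paper is simply terser, invoking dominated convergence rather than a modulus-of-continuity bound, while you spell out the bookkeeping (note only that your stated error $h_N\cdot\omega(h_N)$ is the per-interval contribution, so after summing over $N\sim h_N^{-1}$ intervals the total error is $O(\omega(h_N))$, which still vanishes).
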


\begin{proof}
Let $u^N\in \bbR^N$ be a sequence converging to $u\in H^1$, then $u^N$ converges uniformly on $[0,1]$ to $u$, so by dominated convergence,
\be\label{eq:conv.potentiel.4}
\frac1N\sum_{i=1}^NV(u^N_i)\conv{N}{\infty}\int_0^1V(u(x))\dx.
\ee
The convergence in $H^1$ directly ensures us that
\be\label{eq:conv.potentiel.5}
\frac1N\sum_{i=1}^NN^2(u^N_{i+1}-u^N_{i})^2=\int_0^1\abs{\pare{u^N}'(x)}^2\dx\conv{N}{\infty}\int_0^1\abs{u'(x)}^2\dx.
\ee

Let $h^N\in \bbR^N$ be some sequence converging to $h\in H^1$ then we have
\begin{align}\label{eq:conv.potentiel.6}\nonumber
\grad S^N(u^N)\cdot h^N
&=\sum_{i=1}^N\frac{\del S^N}{\del x_i}(u^N_i)h^N_i
=\frac1N\sum_{i=1}^N\g N^2(u^N_{i+1}-u^N_{i})(h^N_{i+1}-h^N_{i})+V'(u^N_i)h^N_i\\
&\conv{N}{\infty}\int_0^1\g u' h'+V'(u)h
\end{align}
by $L^2$ convergence of the derivatives and dominated convergence.
Lastly, the convergence of the Hessian is completely similar.
\end{proof}

\subsection{Convergence of the eigenvalues}

Let us consider a sequence of points $u^N\in\bbR^N$ converging to $u$ in $H^1$. We need to estimate the convergence of the eigenvalues $(N\la_{k,N})_{1\ieg k\ieg N}$ of $N\cdot HS^N(u^N)$ to the eigenvalues $(\la_{k})_{1\ieg k}$ of $\cH_u S$.

The convergence of a single eigenvalue $N\la_{k,N}$ for $k$ fixed, is obvious from Proposition \ref{prop:conv.potentiel}. The control of the convergence for all the eigenvalues is complex because of the higher eigenvalues (e.g. $\la_{N,N}$). This problem is closely related to the discrepancy between the eigenvalues of $\frac{\g}N\D^N$ and $\g\D$, the discrete Laplacian (defined by \eqref{eq:laplacian.1}) and the Laplacian. We denote $\la^0_{N,k}, \la^0_{k}$ their respective eigenvalues in the increasing order. For Dirichlet boundary conditions, we have
\be
e_{k,N}=N\la^0_{N,k}-\la^0_{k}=\g\cro{4N^2\sin^2\pare{\frac{k\pi}{2N}}-\pi^2k^2}.
\ee
Then $e_{N,N}=\g N^2(4-\pi^2)$ does not converge to $0$. The following proposition adapted from \cite{dehoog.anderssen01} gives us a control of the approximation of the eigenvalues and eigenvectors. 
\begin{proposition}\label{prop:conv.eigen}
Let us consider a sequence $u_N\in\bbR^N$ converging to $u\in C^2$ and such that
$\norm{u_N-u}_{\infty}=O\pare{\frac1{N^2}}$. 
We have:
\begin{itemize}
\item[(i)] there exist $\al\in[0,1[$ and a constant $C_1$ such that for all $N$ and $k<\al N$
\be\label{eq:conv.eigen.2}
\abs{N\la_{N,k}-\la_k-e_{k,N}}\ieg\frac{C_1}{N^2},
\ee
\item[(ii)] there exists a constant $C_2$ such that
$\abs{e_{N,k}}\ieg C_2k^4N^{-2}$,
\item[(iii)] for a fixed $k\ieg N$, the normalized (in $H^1$) eigenvector $\phi_{k,N}$ of $H S^N(u^N)$ associated to $\la_{k,N}$ converges in $H^1$ to the eigenvector $\phi_k$ of $\cH_{u} S$ associated to $\la_k$ and we have, for all $k$
\be\label{eq:conv.eigen.4}
\frac{\norm{\phi_{k,N}}_{\infty}}{\norm{\phi_{k,N}}_{2,N}}\ieg\frac{C}{\sqrt{N}}.
\ee
\end{itemize}
\end{proposition}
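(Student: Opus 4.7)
I would prove items (ii), (i), (iii) in that order, since (ii) is needed to interpret (i) and (i) feeds into (iii). Item (ii) is a direct computation on the bare discrete Laplacian: its eigenvalues are $N\la^0_{N,k}=4\g N^2\sin^2(k\pi/(2N))$ (up to a harmless $N$ versus $N+1$ correction absorbed into the error), and the elementary estimate $\abs{\sin^2(x)-x^2}\ieg Cx^4$ for $x\in[0,\pi/2]$ applied at $x=k\pi/(2N)$ yields $\abs{e_{k,N}}\ieg 4N^2\cdot Cx^4\ieg C_2k^4/N^2$ uniformly in $k\ieg N$. The Neumann case is analogous, with cosine modes.

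For item (i), write $T^N:=N\cdot HS^N(u^N)=-\g\Delta^N/N+\mathrm{diag}(V''(u^N_i))$ and $T:=\cH_uS=-\g\del_{xx}+V''(u)$. The hypothesis $\norm{u_N-u}_\infty=O(N^{-2})$ together with $V\in C^3$ gives $\abs{V''(u^N_i)-V''(u(x_i))}\ieg C/N^2$, so by the Courant-Fischer min-max principle, replacing $V''(u_N)$ by $V''(u)$ on the grid perturbs each eigenvalue by at most $C/N^2$. The remaining task is to compare the resulting discrete operator with its continuous counterpart $T$. Following de Hoog and Anderssen \cite{dehoog.anderssen01}, I would inject the continuous eigenvector $\phi_k$ into the discrete min-max via restriction to the grid, and conversely $\phi_{k,N}$ into the continuous min-max via piecewise-linear interpolation. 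When $k<\al N$, the oscillation length of $\phi_k$ is bounded below by $1/(\al N)$, so a second-order Taylor expansion of the finite-difference stencil produces an interpolation error in the Rayleigh quotient which, after separating out the bare-Laplacian discrepancy $e_{k,N}$, is $O(1/N^2)$ uniformly in such $k$.

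For item (iii), fix $k$ and normalize $\phi_{k,N}$ so that $\norm{\phi_{k,N}}_{L^2([0,1])}=1$ via its piecewise-linear interpolation. The eigenrelation $T^N\phi_{k,N}=\la_{k,N}\phi_{k,N}$ combined with the uniform bounds on $\la_{k,N}$ from (i) and on $V''(u^N)$ yields uniform control of the discrete second differences, hence weak compactness in $H^1$; passing to the weak limit in the bilinear formulation and invoking Proposition \ref{prop:conv.potentiel}, one identifies any subsequential limit as an eigenvector of $T$ of eigenvalue $\la_k$, necessarily $\pm\phi_k$ by non-degeneracy, so the full sequence converges. For the uniform ratio $\norm{\phi_{k,N}}_\infty/\norm{\phi_{k,N}}_{2,N}\ieg C/\sqrt N$, the plan is to expand $\phi_{k,N}$ in the basis $\acc{\psi^0_{j,N}}_{j=1}^N$ of sinusoidal eigenvectors of $-\g\Delta^N/N$, for which the ratio equals exactly $\sqrt{2/N}$; a resolvent argument using the spectral gaps of the bare Laplacian and the uniform boundedness of the diagonal perturbation $V''(u_N)$ shows that $\phi_{k,N}$ concentrates on a bounded number of modes near $\psi^0_{k,N}$, with Parseval-controlled coefficients, preserving the ratio up to an absolute constant.

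The main obstacle will be the uniformity in $k$. For a single fixed $k$, items (i) and (iii) are routine consequences of finite-difference eigenvalue convergence and spectral perturbation theory, but the proposition will be applied to the infinite-product determinants $\Det\cH_\phi S=\prod_k\la_k(\phi)$ entering the prefactor (Lemma \ref{lem:fdet}), so one genuinely needs the bounds up to $k\sim\al N$ in (i) and a uniform-in-$k$ sup-norm estimate in (iii). The latter is the subtlest point because bounded perturbations of the Laplacian can mix many adjacent high modes; the Green's-function asymptotics of \cite{dehoog.anderssen01} are the natural tool to bypass this difficulty.
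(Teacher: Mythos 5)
Your reduction for items (i)--(ii) coincides with the paper's: both pass from $u^N$ to the grid restriction $\wh{u^N}$ by bounding $\sum_i\abs{V''(u^N_i)-V''(u(x_i))}y_i^2\ieg C\norm{u^N-u}_\infty\norm{y}_{2,N}^2=O(N^{-2})\norm{y}_{2,N}^2$ and invoking min--max, and then appeal to \cite{dehoog.anderssen01} for the fine eigenvalue asymptotics of the discretized Sturm--Liouville operator; your closed-form evaluation of $e_{k,N}$ from $\abs{\sin^2 x - x^2}\ieg Cx^4$ is an elementary supplement the paper silently subsumes into that citation. Where you genuinely diverge is in (iii). The paper quotes the Courant--Hilbert bound $\norm{\phi_k}_\infty\ieg C\norm{\phi_k}_{L^2}$ (uniform in $k$) for eigenfunctions of $\cH_u S$, cites \cite{dehoog.anderssen01} again for the eigenvector convergence, and transfers the sup-norm bound to $\phi_{k,N}$ via $H^1$-convergence at fixed $k$ together with $\norm{\cdot}_{2,N}\seg c\sqrt N\norm{\cdot}_{L^2}$. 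You instead propose weak $H^1$-compactness for the convergence, and a resolvent/Fourier-concentration argument in the sinusoidal eigenbasis $\acc{\psi^0_{j,N}}$ of the bare discrete Laplacian for the sup-norm bound: the eigen-equation gives $\abs{c_j}\ieg\min\pare{1,\norm{V''}_\infty/\abs{\la^0_{j,N}-\la_{k,N}}}$, and the near-quadratic spacing of the $\la^0_{j,N}$ makes $\sum_j\abs{c_j}$ bounded uniformly in $k,N$, so $\norm{\phi_{k,N}}_{\infty,N}\ieg C\sqrt{2/N}\,\norm{\phi_{k,N}}_{2,N}$. This route is more self-contained and makes the uniformity in $k$ explicit at the discrete level, whereas the paper's transfer from the continuous Courant--Hilbert bound is only asymptotic at fixed $k$. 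Two points to tighten if you carry it out: weak compactness alone only gives weak $H^1$ convergence, but strong convergence follows from Radon--Riesz once combined with the convergence of the Rayleigh quotients from (i); and in the resolvent estimate you must treat separately the $O(1)$ many indices $j$ in the cluster $\abs{\la^0_{j,N}-\la_{k,N}}\ieg 2\norm{V''}_\infty$, using the trivial bound $\abs{c_j}\ieg1$ there, before summing the tail.
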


\begin{proof}
The proposition is an adaptation of the results of \cite{dehoog.anderssen01} in our case since $NHS^N(u^N)$ is the finite difference approximation of the Sturm-Liouville operator $\cH_u S$.
The original statement in \cite{dehoog.anderssen01} concerns an approximating sequence $u^N$ which is precisely the sequence $\wh{u^N}$ of linear interpolations of $u$. If we take a sequence $u^N$, then for all $y\in\bbR^N$
\begin{align}\label{eq:conv.eigen.5}
N\abs{HS^N(u^N)(y)-HS^N(\wh{u^N})(y)}
=\sum_{i=1}^N\abs{V''(u^N_i)-V''(u(x_i))}y_i^2
\ieg C\norm{u^N-u}_{\infty}\norm{y}_2^2.
\end{align}
Since $\norm{u_N-u}_{\infty}=O\pare{\frac1{N^2}}$, we deduce that the difference between the eigenvalues of $NHS^N(u^N)$ and $NHS^N(\wh{u^N})$ is bounded by $O(\frac1{N^2})$ which gives us the result. A similar control holds for the convergence of the eigenvectors. 
The last result \eqref{eq:conv.eigen.4} comes from the fact that for the eigenvectors of $\cH_u S$ (\cite{courant.hilbert1} pp.334-335), we have a constant $C$ such that $\norm{\phi_k}_{\infty}\ieg C\norm{\phi_k}_{L^2}$.
Then, since $\phi_{k,N}$ converges in $H^1$, it converges in $L^{\infty}$ and $L^2$, then the result comes from the fact that $\norm{\phi_{k,N}}_{2,N}\seg C\sqrt{N}\norm{\phi_{k,N}}_{L^2}$. 
\end{proof}

\begin{remark}
The normalized eigenvector $e_N=\frac{\phi_N}{\norm{\phi_N}_{2,N}}$ satisfies
\be\label{eq:conv.eigen.8}
\norm{e_N}^2_{\infty,N}
=\frac{\norm{\phi_N}^2_{\infty,N}}{\norm{\phi_N}^2_{2,N}}
\ieg \frac{\norm{\phi_N}^2_{L^\infty}}{N\norm{\phi_N}^2_{L^2}}
\ieg \frac{C}{N}\frac{\norm{\phi_N}^2_{H^1}}{\norm{\phi_N}^2_{L^2}}
\ieg\frac{C}{N}.
\ee
Thus, this proves that the coordinates of the normalized eigenvectors in $\bbR^N$ for the euclidean norm are uniformly bounded by $O\pare{\frac{1}{\sqrt{N}}}$.
\end{remark}

The following proposition from \cite{dehoog.anderssen01} states uniform estimates in the function $\phi$ of the eigenvalues of the Hessian operators $\cH_{\phi} S$ and $HS^N(\phi^N)$.
\begin{proposition}\label{prop:control.eigen}
Let $\phi_1^N,\phi_2^N$ be sequences converging in $H^1$ to $\phi_1, \phi_2$, then for all $N,k$
\begin{align}\label{eq:control.eigen.1}
\abs{\la^1_{k,N}-\la^2_{k,N}}&\ieg C&
\abs{\la^1_{k}-\la^2_{k}}&\ieg C
\end{align}
and
$\la^i_{k}=\pi^2k^2+\int_0^1V''(\phi_i(x))\dx+O\pare{\frac1{k^2}}$ for $i=1,2$.
\end{proposition}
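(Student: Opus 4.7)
The plan is to combine the Courant-Fischer min-max principle (for the two uniform comparison bounds) with classical Sturm-Liouville eigenvalue asymptotics (for the expansion). Both $\cH_{\phi_i}S = -\gamma\del_{xx} + V''(\phi_i(\cdot))$ are regular Sturm-Liouville operators on $[0,1]$ with the fixed boundary conditions, sharing the same principal part; similarly $HS^N(\phi_i^N)$ and $HS^N(\phi_j^N)$ share the same discrete Laplacian and differ only through the diagonal zeroth-order contribution from $V''(\phi_i^N)$. Since $\phi_i\in H^1\hookrightarrow C^0([0,1])$ and $\phi_i^N\to\phi_i$ in $H^1$ (hence uniformly by Sobolev embedding), $V''(\phi_i)$ and the sequences $V''(\phi_i^N)$ are uniformly bounded in sup-norm, and so are the pairwise differences.

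For the comparison bounds I would apply the variational formula
\[
\la^i_k = \min_{\substack{W\subset H^1_{bc}\\ \dim W = k}} \max_{h\in W\setminus\{0\}}\frac{\gamma\int_0^1 (h')^2\dx + \int_0^1 V''(\phi_i) h^2\dx}{\int_0^1 h^2\dx},
\]
observing that the two Rayleigh quotients differ on any $h$ by at most $\norm{V''(\phi_1)-V''(\phi_2)}_{\infty}$. Min-max then immediately yields $\abs{\la^1_k - \la^2_k} \ieg \norm{V''(\phi_1)-V''(\phi_2)}_{\infty}$, uniformly in $k$. The identical argument applied to the discrete $\ell^2$-Rayleigh quotient associated with $HS^N(\phi_i^N)$ gives $\abs{\la^1_{k,N}-\la^2_{k,N}} \ieg \norm{V''(\phi_1^N)-V''(\phi_2^N)}_{\ell^\infty}$, which is bounded uniformly in $(k,N)$ thanks to the uniform convergence of $\phi_i^N$.

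For the asymptotic expansion I would invoke the standard Sturm-Liouville eigenvalue asymptotics used in \cite{dehoog.anderssen01}: for the regular problem $-\gamma h'' + q(x)h = \la h$ on $[0,1]$ with the chosen boundary conditions and sufficiently regular potential $q$, one has $\la_k = \pi^2 k^2 + \int_0^1 q(x)\dx + O(1/k^2)$. Applying this with $q = V''(\phi_i)$ yields the claim. Heuristically, testing the Rayleigh quotient against the approximate high-frequency eigenfunction $\sqrt{2}\sin(k\pi x)$ produces the leading eigenvalue and the correct constant correction, the latter because $\int_0^1 2\sin^2(k\pi x)\dx = 1$; the oscillatory remainder $\int_0^1 V''(\phi_i(x))\cos(2k\pi x)\dx$ is $O(1/k^2)$ by integration by parts, given the $C^2$-smoothness of stationary points $\phi_i$ (they satisfy $\gamma\phi_i'' = V'(\phi_i)$) and $V\in C^3$.

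The $O(1)$ comparison bounds are essentially automatic once the min-max viewpoint is adopted and need only the bounded-perturbation structure. The main technical obstacle is the $O(1/k^2)$ remainder in the Sturm-Liouville expansion, which cannot be obtained by soft arguments and requires the quantitative WKB/Pr\"ufer-type machinery of \cite{dehoog.anderssen01} to track the dependence of the high-frequency eigenfunctions on $k$ precisely enough to extract the correct decay rate.
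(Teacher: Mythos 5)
The paper does not supply a proof for this proposition at all; it simply attributes the result to de Hoog and Anderssen \cite{dehoog.anderssen01}, so there is no ``paper argument'' for you to match and your sketch is a genuine reconstruction. Your structure is sound and is the natural one: the two uniform comparison bounds are indeed immediate from the Courant--Fischer min-max principle once you observe that the Rayleigh quotients for $\cH_{\phi_1}S$ and $\cH_{\phi_2}S$ (and for $HS^N(\phi_1^N)$ and $HS^N(\phi_2^N)$) share the same leading term and differ only by a zeroth-order multiplication operator bounded in sup-norm, with uniformity in $N$ following from the one-dimensional Sobolev embedding $H^1\hookrightarrow C^0$.

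Two points you should make explicit, since they are slightly glossed over. First, for the $O(1/k^2)$ remainder you appeal to the stationarity of $\phi_i$ to get $V''(\phi_i)\in C^1$ and then to \cite{dehoog.anderssen01} (or Fix \cite{fix67}) for the refined Sturm--Liouville expansion; but the proposition as stated only assumes $\phi_i^N\to\phi_i$ in $H^1$, under which $V''(\phi_i)$ is merely H\"older continuous and the $O(1/k^2)$ tail is not guaranteed. In the paper's applications (Proposition \ref{prop:conv.eigen1}, Corollary \ref{cor:conv.eigen2}, Theorem \ref{th:main}) the limits are in fact stationary points, so the hypothesis you are silently adding is harmless there, but it deserves to be stated. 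Second, the displayed asymptotic in the proposition is missing the factor $\gamma$: the principal part of $\cH_{\phi}S$ is $-\gamma\partial_{xx}$, whose Dirichlet (resp.~Neumann) eigenvalues on $[0,1]$ are $\gamma\pi^2k^2$, so the correct expansion is $\la^i_k=\gamma\pi^2k^2+\int_0^1V''(\phi_i)\dx+O(1/k^2)$. This does not affect the use made of the proposition (only differences and ratios of eigenvalues enter), but your Rayleigh-quotient heuristic with the test function $\sqrt{2}\sin(k\pi x)$ would produce $\gamma\pi^2k^2$, and you should not reproduce the typo. Finally, as you yourself note, the Rayleigh-quotient test only gives a one-sided bound; the two-sided $O(1/k^2)$ control genuinely requires the Pr\"ufer/WKB analysis from the cited reference, so your proposal is best read as a correct outline whose quantitative core is deferred to \cite{dehoog.anderssen01}, exactly as the paper itself does.
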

\begin{remark}
This proposition shows the convergence of the infinite product of the ratio of eigenvalues denoted by $D(\phi,\psi)$
\begin{align}\label{eq:conv.eigen1.13}
\prod_{k=1}^{N}\frac{\la_k(\phi)}{\la_k(\psi)}
=\prod_{k=1}^{N}\cro{1+\frac{\la_k(\phi)-\la_k(\psi)}{\la_k(\psi)}}
\conv{N}{\infty}\prod_{k=1}^{\infty}\frac{\la_k(\phi)}{\la_k(\psi)}=D(\phi,\psi)
\end{align}
since
\be\label{eq:conv.eigen1.14}
\abs{\frac{\la_k(\phi)-\la_k(\psi)}{\la_k(\psi)}}\ieg\frac{C}{k^2}.
\ee
\end{remark}

\subsection{Product of eigenvalues}

We show the convergence of the product ratio of the eigenvalues of $HS^N(\phi^N)$ and $HS^N(\psi^N)$ to $D(\phi,\psi)$.
\begin{proposition}\label{prop:conv.eigen1}
For any $\phi^N,\psi^N$ converging in $H^1$ to $\phi, \psi$ such that $\cH S(\psi)$ and $\cH S(\phi)$ do not have a zero eigenvalue, and that
\begin{align}\label{eq:conv.eigen1.1}
\norm{\phi^N-\phi}_{\infty}\vee\norm{\psi^N-\psi}_{\infty}\ieg \frac{C}{N^2},
\end{align}
we have the convergence
\be\label{eq:conv.eigen1.2}
\frac{\det(HS^N(\phi^N))}{\det(HS^N(\psi^N))}
\conv{N}{\infty}D(\phi,\psi)=\prod_{k=1}^{+\infty}\frac{\la_k(\phi)}{\la_k(\psi)}.
\ee
\end{proposition}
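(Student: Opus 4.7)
The plan is to take logarithms and dissect the finite product
\[
R_N := \frac{\det HS^N(\phi^N)}{\det HS^N(\psi^N)} = \prod_{k=1}^N \frac{\mu_{k,N}(\phi^N)}{\mu_{k,N}(\psi^N)},
\]
where I set $\mu_{k,N}(u^N) := N\la_{k,N}(u^N)$ for the eigenvalues of $N\cdot HS^N(u^N)$, i.e.\ of the finite difference discretization of $\cH_u S$. I write $\log R_N = L_N + H_N$, splitting at the cutoff $k_0 = \lfloor\beta N\rfloor$, with $\beta\in(0,\al]$ a constant to be fixed below ($\al$ being the constant from Proposition \ref{prop:conv.eigen}).

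For the low-index block $L_N$, the key observation is that the discretization error $e_{k,N}$ appearing in Proposition \ref{prop:conv.eigen}(i), $\mu_{k,N}(u^N) = \la_k(u) + e_{k,N} + O(1/N^2)$, is a property of the discrete Laplacian alone; it is therefore \emph{identical} for $\phi^N$ and $\psi^N$ and cancels to leading order in the ratio. Writing $a = \la_k(\phi)$, $b = \la_k(\psi)$, $e = e_{k,N}$, one computes
\[
\log\frac{\mu_{k,N}(\phi^N)}{\mu_{k,N}(\psi^N)}-\log\frac{a}{b} = \log\pare{1+\frac{e(b-a)+O((a+b)/N^2)}{a(b+e)+O(a/N^2)}}.
\]
Proposition \ref{prop:control.eigen} gives $|b-a|=O(1)$ and $a,b\sim\pi^2 k^2$, while $|e|\ieg Ck^4/N^2\ieg C\beta^2 k^2$ for $k<\beta N$. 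Taking $\beta$ small enough ensures $b+e\seg ck^2$ (the finitely many exceptional small $k$ where $\la_k$ is non-positive are handled separately by the pointwise convergence of Proposition \ref{prop:conv.eigen}(iii)), whence the right-hand side is $O(1/N^2)$ uniformly in $k$. Summing the $k_0=O(N)$ contributions yields $L_N = \sum_{k=1}^{k_0}\log(\la_k(\phi)/\la_k(\psi)) + O(1/N)$.

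For the high-index block $H_N$, I would use a Weyl-type argument. Up to a factor $Nh_N\to 1$, the matrix $NHS^N(u^N)$ equals $-\g\D^N + \operatorname{diag}(V''(u^N_i))$; thus $NHS^N(\phi^N)$ and $NHS^N(\psi^N)$ differ by a diagonal matrix of operator norm $\ieg C\norm{V''(\phi^N)-V''(\psi^N)}_\infty = O(1)$, so Weyl's inequality gives $|\mu_{k,N}(\phi^N)-\mu_{k,N}(\psi^N)|\ieg C$ for every $k$. On the other hand, the eigenvalue of the pure discrete Laplacian is of order $N^2$ once $k>\beta N$, so Weyl again yields $\mu_{k,N}(u^N)\seg cN^2$ for $N$ large. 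Each term in $H_N$ is therefore $O(1/N^2)$, and summing over $(1-\beta)N$ terms gives $H_N = O(1/N)$.

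Putting the pieces together, $\log R_N = \sum_{k=1}^{k_0}\log(\la_k(\phi)/\la_k(\psi)) + O(1/N)$, and the remark following Proposition \ref{prop:control.eigen} ensures the right-hand sum converges to $\log D(\phi,\psi)$ as $k_0\to\infty$; hence $R_N\to D(\phi,\psi)$. The main obstacle is the middle range of $L_N$: the discretization error $e_{k,N}$ can be comparable in size to $\la_k$ near $k=\beta N$, but only its \emph{difference} between $\phi^N$ and $\psi^N$ enters the log-ratio, and that difference is identically zero. Making this cancellation quantitative and uniformly summable in $k$ is the technical heart of the argument.
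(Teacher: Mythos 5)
Your proof is correct and follows essentially the same architecture as the paper's: both split the product of eigenvalue ratios at a cutoff $k\sim\alpha N$, and for the low-index block both rely on the key observation that the discretization error $e_{k,N}$ is a property of the discrete Laplacian alone and therefore cancels between $\phi^N$ and $\psi^N$, leaving a correction of size $O(1/N^2)$ per term. Your bookkeeping (writing $a=\la_k(\phi)$, $b=\la_k(\psi)$, $e=e_{k,N}$, and bounding the log-ratio defect by comparing numerator $e(b-a)+O((a+b)/N^2)$ against denominator $a(b+e)\gtrsim k^4$) is just a reorganization of the paper's computation with $\tht_{k,N}(\phi)-\tht_{k,N}(\psi)$. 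Where you genuinely diverge is the high-index block: the paper compares $\frac{N\la_{k,N}(\phi)}{N\la_{k,N}(\psi)}$ to the continuous ratio $\frac{\la_k(\phi)}{\la_k(\psi)}$ using Proposition~\ref{prop:control.eigen} and sums $\sum_{k>\alpha N}C/k^2=O(1/N)$, while you show directly that $\log\frac{\mu_{k,N}(\phi^N)}{\mu_{k,N}(\psi^N)}=O(1/N^2)$ via a Weyl perturbation bound (the two matrices differ by a bounded diagonal, and $\mu_{k,N}\gtrsim N^2$ once $k>\beta N$). Your route is arguably more self-contained, as it does not need the continuous eigenvalues at all for $k>\beta N$ and makes the $O(1/N)$ decay transparent. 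One caveat you rightly flag but should not understate: for the finitely many small $k$ where $\la_k(\phi)$ or $\la_k(\psi)$ is non-positive, the real logarithm is undefined and the uniform $\la_k\gtrsim k^2$ bound fails; these terms must be peeled off and treated by the pointwise convergence of Proposition~\ref{prop:conv.eigen}(iii), which contributes only $o(1)$. Making that exclusion explicit (and noting that for saddle points the finite product may carry a sign, so one should really track $\log\abs{\cdot}$ together with the sign, which stabilizes for $N$ large) would complete the write-up.
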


\begin{proof}
The proof of the convergence comes from the fact that for small $k$ the approximated eigenvalues are close to the continuous ones ($\la_{k,N}\approx\la_k$) whereas this is not the case for $k$ close to $N$ (Proposition \ref{prop:conv.eigen}). The eigenvalues $\la_{k,N}(\phi)$, $\la_{k,N}(\psi)$ are close at the first order in $k$ uniformly on $\phi$, $\psi$ (Proposition \ref{prop:control.eigen}). Therefore we decompose the product in two parts for small $k$ (i.e. $k<\al N$ from Proposition \ref{prop:conv.eigen}) and large $k$.

Let us denote $\mu_{k,N}(\phi)=N\la_{N,k}(\phi^N)-\la_k(\phi)-e_{k,N}$.
From Proposition \ref{prop:conv.eigen}, there exists $0<\al<1$ such that, for $k\ieg \al N$, 
$\abs{\mu_{k,N}(\phi)}\ieg\frac{c}{N^2}$. The same holds for the sequence $\psi^N$.
Then, we get,
\begin{align}\label{eq:conv.eigen1.5}
\frac{N\la_{k,N}(\phi)}{\la_k(\phi)}\frac{\la_k(\psi)}{N\la_{k,N}(\psi)}
=\frac{1+\tht_{k,N}(\phi)}{1+\tht_{k,N}(\psi)}
=1+\frac{\tht_{k,N}(\phi)-\tht_{k,N}(\psi)}{1+\tht_{k,N}(\psi)}
\end{align}
where $\tht_{k,N}(\phi)=\la_{k}(\phi)^{-1}(e_{k,N}+\mu_{k,N}(\phi))$. Let us remark that for $k\ieg\al N$
\be\label{eq:conv.eigen1.6}
\abs{\tht_{k,N}(\psi)}
\ieg\frac{C}{k^2}\pare{\frac{k^4}{N^2}+\frac1{N^2}}
\ieg C\pare{\al^2+\frac1{N^2}}
\ee
thus if we take $\al$ small enough and $N$ large enough, we have $\abs{\tht_{k,N}(\psi)}<\frac12$. Hence we obtain
\begin{align}\label{eq:conv.eigen1.7}
\abs{\ln{\prod_{k=1}^{\al N}\frac{N\la_{k,N}(\phi)}{\la_k(\phi)}\frac{\la_k(\psi)}{N\la_{k,N}(\psi)}}}
\ieg2\sum_{k=1}^{\al N}\abs{\tht_{k,N}(\phi)-\tht_{k,N}(\psi)}
\ieg \frac{2C\al}{N}
\end{align}
since from Proposition \ref{prop:control.eigen}, 
$\abs{\tht_{k,N}(\phi)-\tht_{k,N}(\psi)}
\ieg
\frac{C}{N^2}$.

For $k>\al N$ we proceed similarly. Let us write
\begin{align}\label{eq:conv.eigen1.9}
\frac{N\la_{k,N}(\phi)}{\la_k(\phi)}\frac{\la_k(\psi)}{N\la_{k,N}(\psi)}
=\frac{1+\tht'_{k,N}}{1+\tht'_{k}}
=1+\frac{\tht'_{k,N}-\tht'_{k}}{1+\tht'_{k}}
\end{align}
where $\tht'_{k,N}=\la_{k,N}(\psi)^{-1}(\la_{k,N}(\phi)-\la_{k,N}(\psi))$ and alike for $\tht'_k$. From Proposition \ref{prop:control.eigen}, we get for all $k$ and $N>N_0$, that
$\abs{\tht'_{k,N}}\vee\abs{\tht'_{k}}\ieg \frac{C}{k^2}$.
Thus we obtain
\begin{align}\label{eq:conv.eigen1.11}
\abs{\ln{\prod_{k=\al N}^{N}\frac{N\la_{k,N}(\phi)}{\la_k(\phi)}\frac{\la_k(\psi)}{N\la_{k,N}(\psi)}}}
\ieg
\sum_{k=\al N}^N\frac{C}{k^2}\pare{1+\frac{C}{k^2}}
\ieg\frac{C}{N}
\end{align}
which finishes the proof.
\end{proof}

In fact, we need a slightly different convergence.
\begin{corollary}\label{cor:conv.eigen2}
Let be $\phi^N,\psi^N$ converging to $\phi,\psi$ such that
\begin{align}\label{eq:conv.eigen2.1}
\norm{\phi^N-\phi}_{L^2}\vee\norm{\psi^N-\psi}_{L^2}&\ieg \frac{C}{N}.
\end{align}
Then we have
\be\label{eq:conv.eigen2.2}
\frac{\det(HS^N(\phi^N))}{\det(HS^N(\psi^N))}
\conv{N}{+\infty}
D(\phi,\psi).
\ee
\end{corollary}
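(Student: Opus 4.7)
The plan is to reduce Corollary \ref{cor:conv.eigen2} to Proposition \ref{prop:conv.eigen1} by interposing the linear interpolations $\wh\phi^N,\wh\psi^N$ of $\phi$ and $\psi$ along the grid $(x_i)$. In our applications $\phi,\psi$ are stationary points of $S$ solving $-\g u''+V'(u)=0$, hence $C^3$ on $[0,1]$, and the standard error estimate for piecewise-linear interpolation gives $\norm{\wh\phi^N-\phi}_{\infty}\vee\norm{\wh\psi^N-\psi}_{\infty}=O(N^{-2})$. Proposition \ref{prop:conv.eigen1} thus applies to the pair $(\wh\phi^N,\wh\psi^N)$ and yields
\be\label{eq:plan.prop}
\frac{\det(HS^N(\wh\phi^N))}{\det(HS^N(\wh\psi^N))}\conv{N}{+\infty}D(\phi,\psi).
\ee
It therefore suffices to prove that $\det(HS^N(\phi^N))/\det(HS^N(\wh\phi^N))\to 1$, together with the analogous statement for $\psi$.

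For this residual step, I would exploit that $HS^N(\phi^N)-HS^N(\wh\phi^N)=h_N\,\mathrm{diag}(V''(\phi^N_i)-V''(\wh\phi^N_i))$, so the local Lipschitz continuity of $V''$ (valid on the bounded range of the sequences, guaranteed by the $H^1$ convergence and the embedding $H^1\hookrightarrow C^{1/2}$) bounds its spectral norm by $Ch_N\max_i\abs{\phi^N_i-\wh\phi^N_i}$. The quadrature identity $\norm{f}_{L^2}^2\simeq h_N\sum_if_i^2$ applied on the linear interpolations translates the hypothesis $\norm{\phi^N-\wh\phi^N}_{L^2}=O(N^{-1})$ into the $\ell^2$ bound $\norm{\phi^N-\wh\phi^N}_{\ell^2}=O(N^{-1/2})$, whence $\max_i\abs{\phi^N_i-\wh\phi^N_i}=O(N^{-1/2})$ and, by Weyl's inequality, the uniform-in-$k$ estimate
\be\label{eq:plan.weyl}
\abs{\mu^N_k(\phi^N)-\mu^N_k(\wh\phi^N)}\ieg CN^{-1/2},\qquad\mu^N_k:=N\la^N_k.
\ee

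To conclude I would expand
\be\label{eq:plan.log}
\log\frac{\det(HS^N(\phi^N))}{\det(HS^N(\wh\phi^N))}=\sum_{k=1}^N\log\pare{1+\frac{\mu^N_k(\phi^N)-\mu^N_k(\wh\phi^N)}{\mu^N_k(\wh\phi^N)}}.
\ee
Combining Proposition \ref{prop:conv.eigen}, which controls $\mu^N_k(\wh\phi^N)$ by the continuous eigenvalue $\la_k(\phi)\sim\g\pi^2k^2$ for $k\ieg\al N$, with Proposition \ref{prop:control.eigen} for the remaining indices and with the non-degeneracy of $\cH_\phi S$ at the lowest indices, one obtains a uniform lower bound $\abs{\mu^N_k(\wh\phi^N)}\seg c(1+k^2)$ valid for $N$ large. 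Each summand in \eqref{eq:plan.log} is then $O(N^{-1/2}(1+k^2)^{-1})$, and summation over $k$ yields a total contribution of $O(N^{-1/2})\to 0$. The symmetric argument for $\psi$ together with \eqref{eq:plan.prop} then concludes. The main obstacle is precisely the uniform lower bound on $\abs{\mu^N_k(\wh\phi^N)}$ at the indices corresponding to a negative eigenvalue of $\cH_\phi S$ (relevant when $\phi$ is a saddle point): one must ensure that the single negative eigenvalue remains bounded away from $0$ for all $N$ large, which follows from $\mu^N_k(\wh\phi^N)\to\la_k(\phi)\neq 0$ (Proposition \ref{prop:conv.eigen}) combined with the non-degeneracy hypothesis.
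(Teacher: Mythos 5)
Your proposal follows exactly the paper's argument: interpose the interpolations $\wh\phi^N,\wh\psi^N$, invoke Proposition \ref{prop:conv.eigen1} for the interpolated pair (using the $O(N^{-2})$ interpolation error for smooth $\phi,\psi$), and show the residual factor $\det(HS^N(\phi^N))/\det(HS^N(\wh\phi^N))\to1$ via an $O(N^{-3/2})$ spectral-norm estimate on the diagonal perturbation matrix, Weyl's inequality, and the uniform eigenvalue growth $N\la_{k,N}\gtrsim k^2$, giving the same $O(N^{-1/2})$ control on $\abs{\ln D_N}$. The only cosmetic variation is that you bound the quadratic-form perturbation through $\max_i\abs{\phi^N_i-\wh\phi^N_i}$ while the paper uses Cauchy--Schwarz via $\norm{\phi^N-\wh\phi^N}_{2,N}\norm{y}^2_{4,N}$, and both yield identical rates; your closing remark on the uniform lower bound at the index of the negative eigenvalue is a real (if small) gap in the paper's write-up that you correctly close by combining convergence with non-degeneracy.
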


\begin{proof}
From the previous proposition, we get that
\be\label{eq:conv.eigen2.3}
\frac{\det(HS^N(\wh\phi^N))}{\det(HS^N(\wh\psi^N))}
\conv{N}{+\infty}
D(\phi,\psi)
\ee
where $\wh\phi^N$ (resp. $\wh\psi^N$) is the linear interpolation of $\phi$ (resp. $\psi$).
So we prove
\be\label{eq:conv.eigen2.4}
D_N=\frac{\det(HS^N(\wh\phi^N))}{\det(HS^N(\wh\psi^N))}
\cro{\frac{\det(HS^N(\phi^N))}{\det(HS^N(\psi^N))}}^{-1}
=\prod_{k=1}^N\frac{1+\tht_{k}(\phi)}{1+\tht_{k}(\psi)}
\conv{N}{\infty}1
\ee
where $\tht_{k}(\phi)=\la_{k,N}(\phi^N)^{-1}(\la_{k,N}(\wh\phi^N)-\la_{k,N}(\phi^N))$.
From the fact that
$\norm{\phi^N-\phi}_{L^2}\ieg \frac{C}{N}$
we obtain
$\norm{\phi^N-\wh{\phi^N}}_{L^2}\ieg \frac{C'}{N}$. Then for all $y\in \bbR^N$, we have
\begin{multline}\label{eq:conv.eigen2.7}\nonumber
\abs{HS^N(\phi^N)(y)-HS^N(\wh\phi^N)(y)}
=\frac{1}{N}\sum_{i=1}^N\abs{V''(\phi^N_i)-V''(\phi(x_i))}\abs{y_i}^2\\
\ieg \frac{C}{N}\sum_{i=1}^N\abs{\phi^N_i-\phi(x_i)}\abs{y_i}^2
\ieg \frac{C}{\sqrt{N}}\norm{\phi^N-\wh\phi^N}_{L^2}\norm{y}_{4,N}^2
\ieg \frac{C}{N^{3/2}}\norm{y}_{2,N}^2.
\end{multline}
Therefore we get that $\abs{\la_{k,N}(\phi^N)-\la_{k,N}(\wh\phi^N)}\ieg\frac{C}{N^{3/2}}$.
The same holds for $\psi$.

Then, we obtain
\be\label{eq:conv.eigen2.10}
\abs{\tht_{k}(\psi)}
\ieg\frac{CN}{k^2}\times\frac{1}{N^{3/2}}
\ieg\frac{C}{k^2\sqrt{N}}\ieg\frac12
\ee
for $N$ sufficiently large.

Thus we get
\begin{align}\label{eq:conv.eigen2.11}
\abs{\ln\cro{D_N}}
\ieg
\sum_{k=1}^N\frac{\abs{\tht_{k}(\phi)-\tht_{k}(\psi)}}{1+\tht_{k}(\psi)}
\ieg
2\sum_{k=1}^N\abs{\tht_{k}(\phi)}+\abs{\tht_{k}(\psi)}
\ieg
4C\sum_{k=1}^N\frac{1}{k^2\sqrt{N}}.
\end{align}
Then let us fix $\eta>0$, we have
\begin{align}\label{eq:conv.eigen2.12}
\abs{\ln\cro{D_N}}
\ieg
C\sum_{k=1}^{\eta\sqrt[3]{N}}\frac{1}{k^2\sqrt{N}}
+C\sum_{k=\eta \sqrt[3]{N}}^{ N}\frac{1}{k^2\sqrt{N}}
\ieg C\eta N^{-1/6}+\frac{C}{\eta^2}N^{-1/6}.
\end{align}
Therefore we get
$\limsup_{N\to\infty}\abs{\ln\cro{D_N}}=0$
which proves the proposition.
\end{proof}

\subsection{Approximated stationary points}\label{ssec:last}

The last property we need to check is that for each stationary point of $S$, there exists a unique sequence of stationary points of $S^N$ converging to this stationary point. Moreover, to ensure the limit of the ratio of eigenvalues, this convergence has to be fast enough (see Corollary \ref{cor:conv.eigen2}).
To this aim, we have the following proposition.
\begin{proposition}\label{prop:stationary}
There exist $C,N_0$, such that for all $N>N_0$, there is for each minimum (resp. saddle point) $\phi$ of $S$ a minimum (resp. saddle point) $\phi^N$ of $S^N$ such that
\begin{align}\label{eq:stationary.1b}
\norm{\phi-\phi^N}_{L^2}&\ieg \frac{C}{N}
\end{align}
where $\wh{\phi^N}$ is the linear interpolation of $\phi$.
\end{proposition}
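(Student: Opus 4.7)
The plan is to construct $\phi^N$ by a quantitative inverse function theorem applied to the map $\nabla S^N\colon \bbR^N\to\bbR^N$ in a small neighborhood of the nodal restriction $\wh\phi^N:=(\phi(x_1),\dots,\phi(x_N))$. Three ingredients will be needed: (i) \emph{consistency}, i.e.\ $\nabla S^N(\wh\phi^N)$ is small; (ii) \emph{stability}, i.e.\ $HS^N(\wh\phi^N)$ is invertible with a uniformly controlled inverse; (iii) \emph{regularity}, i.e.\ $y\mapsto HS^N(y)$ is Lipschitz in operator norm, so that Newton--Kantorovich applies.

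For (i), any stationary point solves $-\g\phi''+V'(\phi)=0$; since $V\in C^3$, a bootstrap shows $\phi\in C^4([0,1])$ with $\norm{\phi}_{C^4}<\infty$. The three-point Taylor expansion of the discrete Laplacian gives $\Delta^N\wh\phi^N_i = \phi''(x_i)+O(h_N^2)$ for interior nodes, and the boundary entries are handled using the explicit form \eqref{eq:laplacian.1} together with the relevant boundary conditions. Hence
\be
\bigl(\nabla S^N(\wh\phi^N)\bigr)_i = h_N\bigl[-\g\,\phi''(x_i)+V'(\phi(x_i))\bigr]+O(h_N^3)=O(h_N^3),
\ee
and consequently $\norm{\nabla S^N(\wh\phi^N)}_{\ell^2}=O(N^{-5/2})$.

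For (ii), Proposition~\ref{prop:conv.eigen} shows that the low eigenvalues of $N\cdot HS^N(\wh\phi^N)$ approach those of $\cH_\phi S$, while the high eigenvalues inherit the quadratic coercivity of the discrete Laplacian. Assumption~\ref{hyp:S} gives $\inf_{k\seg1}\abs{\la_k(\phi)}>0$, so there exist $c>0$ and $N_0$ such that every eigenvalue of $HS^N(\wh\phi^N)$ has absolute value at least $c/N$ for $N\seg N_0$; thus
\be
\norm{HS^N(\wh\phi^N)^{-1}}_{\ell^2\to\ell^2}\ieg C\,N.
\ee
For (iii), since $V''$ is Lipschitz on the range of $\phi$, a direct computation using the formula $HS^N(y)=h_N(-\g\Delta^N+\mathrm{diag}(V''(y_i)))$ gives $\norm{HS^N(y)-HS^N(y')}_{\ell^2\to\ell^2}\ieg C h_N\norm{y-y'}_{\ell^2}$, so the Jacobian of $\nabla S^N$ is Lipschitz with constant $O(1/N)$.

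Combining (i)--(iii), the Newton--Kantorovich condition $\beta\eta K=O(N)\cdot O(N^{-3/2})\cdot O(N^{-1})\to 0$ is satisfied, producing a unique zero $\phi^N$ of $\nabla S^N$ in a ball of $\ell^2$-radius $O(N^{-3/2})$ around $\wh\phi^N$, with
\be
\norm{\phi^N-\wh\phi^N}_{\ell^2}\ieg 2\norm{HS^N(\wh\phi^N)^{-1}}\cdot\norm{\nabla S^N(\wh\phi^N)}_{\ell^2}=O(N^{-3/2}).
\ee
Since the $L^2$ norm of a piecewise linear interpolant is bounded by $C\sqrt{h_N}$ times the $\ell^2$ norm of its nodal values, we obtain $\norm{\phi^N-\wh\phi^N}_{L^2}=O(N^{-2})$, while the standard interpolation estimate yields $\norm{\wh\phi^N-\phi}_{L^2}=O(N^{-2})$. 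The triangle inequality delivers the claimed $O(1/N)$ bound with ample margin. Finally, since $HS^N(\phi^N)$ is close in operator norm to $HS^N(\wh\phi^N)$, its signature agrees with that of $\cH_\phi S$ by perturbation of isolated eigenvalues: $\phi^N$ is a minimum whenever $\phi$ is, and a saddle of Morse index one whenever $\phi$ is. Local uniqueness of $\phi^N$ is built into the Newton--Kantorovich statement.

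The only non-routine step is the stability bound: controlling $\norm{HS^N(\wh\phi^N)^{-1}}$ uniformly in $N$. This is where the full content of Proposition~\ref{prop:conv.eigen} is used, combining the sharp low-frequency comparison with a uniform spectral gap for $k$ close to $N$ coming from the quadratic growth of the discrete Laplacian spectrum.
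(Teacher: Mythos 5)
Your proof takes essentially the same route as the paper: both apply a quantitative implicit function theorem (Banach fixed point in the paper, Newton--Kantorovich in your write-up, which are equivalent here) to $\nabla S^N$ near the nodal vector $\wh\phi^N$, using consistency of the stencil at a stationary point of $S$, the stability bound $\norm{HS^N(\wh\phi^N)^{-1}}_{2,N}\ieg CN$ from uniform eigenvalue control, and the Lipschitz regularity of the Hessian; both then convert to $L^2$ through the $h_N^{1/2}$ scaling and combine with the $O(N^{-2})$ interpolation error for $\norm{\phi-\wh\phi^N}_{L^2}$.

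One quantitative claim is too strong and should be repaired. You assert $(\nabla S^N(\wh\phi^N))_i=O(h_N^3)$ for every $i$, hence $\norm{\nabla S^N(\wh\phi^N)}_{2,N}=O(N^{-5/2})$. This is correct at interior nodes (and at the boundary under Dirichlet conditions, since there $\phi$ vanishes exactly at the ghost node), but under Neumann conditions it fails at the two boundary indices. With the ghost-value closure $y_0=y_1$ and the midpoint-shifted grid $x_i=\pare{i-\tfrac12}h_N$, the discrete Laplacian at $i=1$ is $\pare{\phi(x_2)-\phi(x_1)}/h_N^2=\phi''(x_1)+\tfrac{h_N}{24}\phi'''(x_1)+O(h_N^2)$ even for $\phi\in C^4$ satisfying $\phi'(0)=0$; the leading odd term does not cancel because $\phi'''(0)$ is unconstrained. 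Consequently $(\nabla S^N(\wh\phi^N))_1$ is only $O(h_N^2)$ and $\norm{\nabla S^N(\wh\phi^N)}_{2,N}$ is $O(N^{-2})$, not $O(N^{-5/2})$. This is precisely why the paper uses the conservative per-component bound $C_1/N^2$, which is sharp at the Neumann boundary and loose in the interior. Your conclusion is unharmed: $O(N)\cdot O(N^{-2})=O(N^{-1})$ in $\norm{\cdot}_{2,N}$ still yields $O(N^{-3/2})$ in $L^2$, comfortably inside the required $O(1/N)$; but the phrase ``with ample margin'' should be read as $O(N^{-3/2})$ rather than $O(N^{-2})$ under Neumann conditions.
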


\begin{proof}
Since by Assumption \ref{hyp:S}, there is a finite number of saddles and stationnary points then we only need to prove the proposition for a given saddle or minimum. Let $\phi$ be a minimum, we prove that there is sequence $\phi^N$ of minima of $S^N$ such that
\be\label{eq:stationary.2b}
\norm{\phi^N-\wh{\phi^N}}_{L^2}\ieg \frac{C}{N}.
\ee
The result \eqref{eq:stationary.1b} follows from \eqref{eq:stationary.2b} since we already have that 
\be
\norm{\phi-\wh{\phi^N}}_{L^2}\ieg\norm{\phi-\wh{\phi^N}}_{\infty}\ieg \frac{C}{N^2}.
\ee

In order to prove \eqref{eq:stationary.2b}, we use a fixed point theorem. Let us consider the ball $B_{C/\sqrt{N}}$ of radius $\frac{C}{\sqrt{N}}$ in the $\norm{\cdot}_{2,N}$ norm where $C$ is a constant we will fix later. We want to find  $z^0\in B_{C/\sqrt{N}}$ such that $\grad S^N(\wh{\phi^N}+z^0)=0$.
In that case we will have $\phi^N=\wh\phi^N+z$ and 
\be\label{eq:stationary.6}
\norm{\phi^N-\wh{\phi^N}}^2_{L^2}\ieg\frac1{N}\norm{z}^2_{2,N}\ieg\frac{C}{N^2}.
\ee
By a Taylor expansion of the gradient we have
\begin{align}\label{eq:stationary.7}
\grad S^N(\wh{\phi^N}+z)_i
&=\grad S^N(\wh{\phi^N})_i+(HS^N(\wh{\phi^N})z)_i+g_i(z)
\end{align}
where $g_i$ is the remainder which can take the form
\be\label{eq:stationary.8}
g_i(z)=\int_0^1(1-t)\frac{\del^3S^N}{\del z_i^3}(\wh{\phi^N}+tz)z_i^2\dt
=\frac1N\int_0^1(1-t)V'''(\phi_i+t z_i)z_i^2\dt.
\ee
Then we have for all $z,y\in B_{C/\sqrt{N}}$
\begin{align}\label{eq:stationary.9}
\abs{g_i(z)}&\ieg \frac{C_0}{N} z_i^2&\text{ and }&&
\abs{g_i(x)-g_i(y)}&\ieg \frac{C_0}{N}\abs{z_i^2-y_i^2}\ieg\frac{2C_0}{N^{3/2}}\abs{z_i-y_i}.
\end{align}
Let us also remark that since $\phi$ is a stationary point for the potential $S$, thus we have $-\g \phi''(x_i)+V'(\phi(x_i))=0$. Therefore we get
\begin{align}\label{eq:stationary.10}\nonumber
\abs{\grad S^N(\wh{\phi^N})_i}
&=\abs{\grad S^N(\wh{\phi^N})_i-\frac{1}N\pare{-\g \phi''(x_i)+V''(\phi(x_i))}}\\
&=\frac1N\abs{\g N^2(\phi(x_{i+1})-2\phi(x_i)+\phi(x_{i-1}))-\g\phi''(x_i)}
\ieg\frac{C_1}{N^2}.
\end{align}
For $N$ sufficiently large $HS^N(\wh{\phi^N})$ is not degenerate then $z^0$ is solution of the fixed point equation
\be\label{eq:stationary.11}
z^0=HS^N(\wh{\phi^N})^{-1}(-\grad S^N(\wh{\phi^N})-g_i(z^0))=F(z^0).
\ee
The $(2,N)$-norm of $HS^N(\wh{\phi^N})^{-1}$ is bounded by the inverse of the smallest eigenvalue (in absolute value). Then
$\norm{HS^N(\wh{\phi^N})^{-1}}_{2,N}\ieg C_2 N$. For $z\in B_{C/\sqrt{N}}$, we get
\begin{align}\label{eq:stationary.13}\nonumber
\norm{F(z)}^2_{2,N}
&\ieg \norm{HS^N(\wh{\phi^N})^{-1}}^2_2
\pare{\norm{\grad S^N(\wh{\phi^N})}^2_2+\sum_{i=1}^N\abs{g_i(z)}^2}
\ieg C^2_2N^2\pare{\frac{C^2_1}{N^3}+C^2_0\norm{z}_{4,N}^4}\\
&\ieg C'_1\pare{\frac1N+N^2\norm{z}_{2,N}^4}
\ieg C'_1\pare{\frac1N+\frac {C^4}{N^2}}
\ieg\frac{C^2}{N}
\end{align}
for $C$ sufficiently small. Therefore
$F(B_{C/\sqrt{N}})\subset B_{C/\sqrt{N}}$.
We also have for $z,y\in B_{C/\sqrt{N}}$,
$F(y)-F(z)=HS^N(\wh{\phi^N})^{-1}(-g_i(y)+g_i(z))$.

Then
\begin{align}\label{eq:stationary.16}\nonumber
\norm{F(y)-F(z)}_{2,N}^2
&\ieg C_2N^2\sum_{i=1}^N\abs{-g_i(y)+g_i(z)}^2
\ieg \frac{C'_2}{N}\norm{y-z}_{2,N}^2.
\end{align}
Thus $F$ is a contraction for $N$ sufficiently large. By the fixed point Theorem, there exists $z^0\in B_{C/\sqrt{N}}$ solution of $z^0=F(z^0)$ which proves Proposition \ref{prop:stationary}
\end{proof}

\section{Estimates}\label{sec:estimates}

\subsection{Description}\label{ssec:description}

In this section, we compute uniformly in the dimension the expectation of the transition times. We proceed as in \cite{bbm10} and use the potential theory developed in \cite{bovier04}.
Let us consider the $N$-dimensional diffusion
\be\label{finite.2c}
\dint Y_t=-\grad S_N(Y_t)\dt +\sqrt{2\e}\dint B_t
\ee
which comes from \eqref{eq:finite.2b} with the time change $Y_{h_Nt}=X_t$.
We denote by $\mu^N$ the invariant measure for the process $Y$
\be\label{eq:estimate.1}
\mu^N(\dx)=e^{-\frac{S^N(x)}{\e}}\dx.
\ee

Let us consider the norms for $y\in\bbR^N$ and $p\seg 1$
\begin{align}\label{eq:norm.3b}
\norm{y}_{p,N}^p&=\sum_{x=1}^N\abs{y_i}^p&
\norm{y}_{\infty,N}&=\max_{i=1\cdots N}\abs{y_i}.
\end{align}

\begin{remark}
As in the previous section, we associate to a point $y\in \bbR^N$ its linear interpolation on $[0,1]$ between the points $(x_i,y_i)$ ($x_i$ is given by \eqref{eq:points.d},\eqref{eq:points.n}) that we denote by $y$.
Let us consider the $L^p$ norm of $y$ on $[0,1]$, we have for all $p\in[1,+\infty]$
\begin{align}\label{eq:norm.4}
\frac1{(4N)^{1/p}}\norm{y}_{p,N}\ieg\norm{y}_{L^p}
=\cro{\int_0^1\abs{y(x)}^p\dx}^{\frac1p}
\ieg\frac1{N^{1/p}}\norm{y}_{p,N}.
\end{align}
This can be done using the Riesz-Thorin Theorem, remarking that
\begin{align}\label{eq:norm.5}
\frac1{4N}\norm{y}_{1,N}
&\ieg\norm{y}_{L^1}
\ieg\frac1{N}\norm{y}_{1,N}&\text{ and }&&
\norm{y}_{\infty,N}&=\norm{y}_{L^{\infty}}.
\end{align}
\end{remark}

In order to introduce the other norms, we need the following a priori estimates on the eigenvalues of the Hessian of $S^N$. Let us recall the Hessian of $S^N$ at a point $\phi^N\in\bbR^N$ is
\be\label{eq:eigen.1}
HS^N(\phi^N)(h)_j=-\frac1N(\D^Nh)_j+\frac1NV''(\phi^N(x_j))h_{j},\text{ for $h\in\bbR^N$}
\ee
with the suitable boundary conditions.

\begin{lemma}[\cite{dehoog.anderssen01}]\label{lem:eigenvaluesN}
For all $\phi^N\in \bbR^N$ such that $\norm{\phi^N}_{\infty}<A$, the eigenvalues $(\la_{k,N}(\phi^N))_{k=1}^N$ of $HS^N(\phi^N)$ arranged in increasing order
satisfy the bound
\be\label{eq:eigen.2}
m(A)k^2-1\ieg N\la_{k,N}(\phi^N)\ieg M(A)k^2+1
\ee
where $m(A)$ and $M(A)$ do not depend on $N$ and $\phi^N$ (only on $A$).
\end{lemma}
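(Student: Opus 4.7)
The plan is to use the Courant--Fischer min-max characterisation of eigenvalues in order to compare the spectrum of $N\cdot HS^N(\f^N)$ to that of the discrete Laplacian $-\D^N$. Write
\begin{equation*}
N\cdot HS^N(\f^N) \;=\; -\D^N + D_N, \qquad D_N := \mathrm{diag}\bigl(V''(\f^N_i)\bigr)_{i=1}^N.
\end{equation*}
Since $V\in C^3$ and $\norm{\f^N}_{\infty}<A$, the quantity $K(A):=\sup_{\abs{u}\ieg A}\abs{V''(u)}$ is finite and depends only on $A$, giving $\norm{D_N}_{\mathrm{op}}\ieg K(A)$ uniformly in $N$ and in $\f^N$. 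Weyl's inequality for the sum of self-adjoint matrices (equivalently, applying min-max to the quadratic form $v\mapsto \bra{v,(-\D^N+D_N)v}$ and using $\abs{\bra{v,D_N v}}\ieg K(A)$ for unit vectors) therefore yields, with $\mu^0_{k,N}$ denoting the $k$-th eigenvalue of $-\D^N$ in increasing order,
\begin{equation*}
\mu^0_{k,N}-K(A) \;\ieg\; N\la_{k,N}(\f^N) \;\ieg\; \mu^0_{k,N}+K(A).
\end{equation*}

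Next, diagonalise $-\D^N$ in the Fourier basis adapted to each boundary condition: for Dirichlet, $\mu^0_{k,N}=4(N+1)^2\sin^2\!\bigl(\tfrac{k\pi}{2(N+1)}\bigr)$, $k=1,\dots,N$, and for Neumann the analogous $4N^2\sin^2\!\bigl(\tfrac{(k-1)\pi}{2N}\bigr)$. The elementary inequalities $\tfrac{2x}{\pi}\ieg \sin x\ieg x$ on $[0,\pi/2]$ then give
\begin{equation*}
4(k-1)^2 \;\ieg\; \mu^0_{k,N} \;\ieg\; \pi^2 k^2
\end{equation*}
(with the lower bound improving to $4k^2$ in the Dirichlet case, where the $k=0$ mode is absent). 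Thus $\mu^0_{k,N}$ is sandwiched between two multiples of $k^2$ with universal constants.

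Combining the two displays, $N\la_{k,N}(\f^N)$ lies between $4(k-1)^2-K(A)$ and $\pi^2 k^2+K(A)$. The additive $\pm K(A)$ can then be absorbed into a $k^2$-correction plus a universal constant: for every $k\seg 1$, $K(A)\ieg\max(K(A)-1,0)\cdot k^2+1$. Choosing $M(A)=\pi^2+\max(K(A)-1,0)$ and a correspondingly modified $m(A)$ (which in the Neumann case must be lowered enough to absorb $\mu^0_{1,N}=0$) yields the claimed two-sided estimate with constants depending only on $A$ and $\g$. The main subtlety here is purely cosmetic: one must track the constants so that the bounded multiplicative perturbation $K(A)$ is rewritten in the specific $c(A)k^2+1$ shape required by the statement, compatibly with both ends of the spectrum. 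The spectral input itself is classical, so no substantial obstruction arises.
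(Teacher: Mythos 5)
The paper never proves this lemma: it is stated as a citation to de~Hoog and Anderssen \cite{dehoog.anderssen01}, so there is no in-text argument for yours to be measured against. Your proof is correct and is the natural elementary one: decompose $N\cdot HS^N(\phi^N)$ as a discrete Laplacian plus a diagonal perturbation uniformly bounded by $K(A)=\sup_{|u|\le A}|V''(u)|$, apply Weyl's inequality (or equivalently min--max) to transfer the problem to $-\Delta^N$, then use the explicit Dirichlet/Neumann eigenvalues and $\tfrac{2x}{\pi}\le\sin x\le x$ on $[0,\pi/2]$ to sandwich $\mu^0_{k,N}$ between $4(k-1)^2$ and $\pi^2 k^2$. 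This is almost certainly the content of the cited reference as well (which in fact proves the sharper asymptotic expansion later quoted in Proposition~\ref{prop:conv.eigen}), so your argument is a faithful filling-in rather than a genuinely different route.

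Two small remarks on the interface with the paper's normalisation, neither of which is a gap in your argument. First, \eqref{eq:eigen.1} as printed drops the factor $\g$ and writes $1/N$ where the exact Hessian has a prefactor $h_N$ (which is $1/(N+1)$ for Dirichlet); both discrepancies are multiplicative factors bounded between $1/2$ and $2$ uniformly in $N$, and you correctly note they are absorbable into $m(A),M(A)$. Second, your observation that the constant $K(A)$ must be folded into the quadratic coefficients to match the literal $\pm 1$ of the statement is correct, and it is worth being explicit that the resulting $m(A)$ can then be negative whenever $K(A)>1$: at $k=1$ the Neumann lower bound forces $m(A)\le 1-K(A)$, since $\la_{1,N}$ can genuinely be as low as $-K(A)/N$ for a strongly concave $V''$. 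This is a feature of how the lemma is phrased (with $\pm 1$ rather than $\pm C(A)$), not a defect of your proof, but a reader using the lower bound later should be aware that the $k^2$ coefficient need not be positive without further hypotheses on $V''$ or on the point $\phi^N$.
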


Let us fix $\phi^N\in \bbR^N$. We consider the orthonormal eigenvectors $(v_{l})_l$ of $HS^N(\phi^N)$. The decomposition of $h\in \bbR^N$ in this orthonormal basis is given by $h=\sum_{l=1}^N\wt h_iv_l$.
For $p\in[1,\infty]$, we define the norms $\norm{h}_{p,\cF}$
\begin{align}\label{eq:norm.7}
\norm{h}^p_{p,\cF}&=\sum_{i=1}^N\abs{\wt h_i}^p&\norm{h}_{\infty,\cF}&=\max_{i=1\cdots N}\abs{\wt h_i}.
\end{align}
As in \cite{bbm10}, these are the norms we use to control the approximations of the potential around our stationary points. Let us note that the norms depend on the point $\phi^N$.

\begin{remark}\label{rem:norm}
As in Section 4.1.1 in \cite{bbm10}, the Hausdorff-Young Theorem can be adapted to the norms $\norm{\cdot}_{p,\cF}$ and $\norm{\cdot}_{p,N}$. For all $2\ieg p\ieg +\infty$ and $q$ such that $q^{-1}+p^{-1}=1$, we obtain
\be\label{eq:norm.12}
\frac1{N}\norm{x}^{p}_{p,N}\ieg C\pare{\frac{1}{\sqrt{N}}\norm{x}_{q,\cF}}^p.
\ee
In fact, let $T:\bbR^N\to\bbR^N$ be the linear mapping $T(y)=\sum_{k=0}^{N-1}y_kv_{N,l}(z^*_i)$. By definition, $\norm{Ty}_{p,\cF}=\norm{y}_{p,N}$. The proof of \eqref{eq:norm.12} is an application of the Riesz-Thorin Theorem, between $p=2$ and $p=\infty$.
On one hand, we have $\norm{Ty}^{2}_{2,N}=\norm{y}^{2}_{2,N}$ since the eigenvectors form a orthonormal basis. On the other hand, we have $\norm{Ty}_{\infty,N}\ieg \frac{C}{\sqrt{N}}\norm{y}_{1,N}$ since the coordinates of the eigenvectors of the basis are bounded by $\frac{C}{\sqrt{N}}$ (see Lemma \ref{prop:conv.eigen}, Equation \eqref{eq:conv.eigen.4}).
\end{remark}

\bigskip

Let us recall the infinite dimensional situation. The process $u$ starts from a minimum $\phi_{l_0}$ of $S$ and reaches the set of minima $\cM_l$. We denote by $\wh S_0=\wh S(\phi_{l_0},\cM_l)$ the height of the saddle points defined by \eqref{eq:saddle.1}.

By Assumption \ref{hyp:S}, for all $N$ sufficiently large, we have a finite set $\cM^N=\{x^*_i\}$ of minima of $S^N$. From Proposition \ref{prop:conv.eigen} and Proposition \ref{prop:stationary}, we deduce that a sequence of minima $x^*_{l_0}$ converges to $\phi_{l_0}$. Similarly, there is a subset $\cM_l^N$ of $\cM^N$ such that each minimum of $\cM_l^N$ converges to a minimum of $\cM_l$. 

We construct a graph for the finite dimensional case as the graph for the infinite dimensional case in Section \ref{ssec:main}. The vertices are the minima $\cM^N$. The edges are the saddle points $z^*_k$ of $S^N$ for which $\abs{\wh S_0-S^N(z^*_k)}<\eta$ for some fixed $\eta>0$. We connect the edge $z^*_k$ between the two minima that the saddle point $z^*_k$ connects directly. To each saddle point $z_k^*$, we associate a weight 
\be\label{eq:weight.1}
w^*_k=\frac{\abs{\la_N^-(z_k^*)}e^{-\frac{S^N(z_k^*)}{\e}}}{\sqrt{\abs{\det{HS^N(z_k^*)}}}}.
\ee
To each minima $x^*_j$, we associate a value $a_j=a(x^*_j)\in\bbR$. We denote by $a_{i+}$ and $a_{i-}$ the two values associated to the minima connected by the saddle point $z^*_i$.

We associate to this graph a quadratic form $Q^N(a)$, for $a$ a real vector indexed by the minima $\cM^N$
\be\label{eq:quad.1}
Q^N(a)=\sum_{z^*_l}w^*_l(a_{l+}-a_{l-})^2.
\ee
The equivalent conductance, $C^*(N,\e)$, between the sets $x_{l_0}^*$ and $\cM^N_{l}$ is defined by
\be\label{eq:capacity.1}
C^*(N,\e)=\inf\acc{Q^N(a), a(x_{l_0}^*)=1, a(x_i^*)=0, x_i^*\in\cM^N_{l}}.
\ee

We recall the fundamental formula \eqref{eq:estimate.2} proved in \cite{bovier04}. 
The expression of the expectation of the hitting time $\tau^N_{\e}(\cB_{\rho}^N(x_{l_0}^*))$ is based on two quantities: the equilibrium potential and the capacity with respect to the sets $\cB_{\rho}^N(x_{l_0}^*)$ and $\cB_{\rho}^N(\cM^N_{l})$.
The equilibrium potential, $h^*$, is defined by $h^*(x)=\bbP_x[\tau^N_{\e}(\cB_{\rho}^N(x_{l_0}^*))<\tau^N_{\e}(\cB_{\rho}^N(\cM^N_{l}))]$. The Dirichlet form,  $\ccE^N$,  associated with the diffusion process $Y$ on $\bbR^N$ is
\be\label{eq:estimate.4}
\ccE^N(h)=\e\int_{\bbR^N}\norm{\grad h(x)}^2_{2,N}\mu^N(\dx).
\ee
The capacity is the evaluation of the Dirichlet form on $h^*$. The capacity also satisfies a variational principle. We have
\begin{multline}\label{eq:estimate.3}
\capa{\cB_{\rho}^N(x_{l_0}^*),\cB_{\rho}^N(\cM^N_{l})}=\ccE^N(h^*)\\
=\inf\acc{\ccE^N(h), h\in H^1(\bbR^N), h=1 \text{ on $\cB_{\rho}^N(x_{l_0}^*)$},
h=0 \text{ on $\cB_{\rho}^N(\cM^N_{l})$}}.
\end{multline}

The expectation of the hitting time is expressed by
\be\label{eq:estimate.2}
\bbE_{\nu^{N}}[\tau^N_{\e}(\cB_{\rho}^N(\cM^N_{l}))]=\frac{\int_{\bbR^N}h^*(x)\dint \mu^N(x)}{\capa{\cB_{\rho}^N(x^*_{l_0}),\cB_{\rho}^N(\cM^N_{l})}}
\ee
where $\nu^N$ is a probability measure on $\del \cB^N_{\rho}(x^*_{l_0})$.

\subsection{Capacity}\label{ssec:capacity}

We prove that the capacity defined in \eqref{eq:estimate.3} can be estimated by the equivalent conductance $C^*(N,\e)$ defined in \eqref{eq:capacity.1}.
\begin{proposition}\label{prop:capa}
For all $\e<\e_0$ and $\rho$, we have
\be\label{eq:cap.1}
\capa{\cB_{\rho}^N(x_{l_0}^*),\cB_{\rho}^N(\cM^N_{l})}
= \e \sqrt{2\pi\e}^{N-2}C^*(N,\e)(1+\psi_1(\e,N))
\ee
where $\limsup_{N\to+\infty}\abs{\psi_1(\e,N)}<\sqrt{\e}\abs{\ln(\e)}^{3/2}$ for all $N>N_0$.
\end{proposition}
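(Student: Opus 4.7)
The plan is to apply the Dirichlet variational characterization of capacity \eqref{eq:estimate.3} to obtain matching upper and lower bounds, following the potential-theoretic scheme of \cite{bovier04} adapted to the $N$-dimensional setting uniformly in $N$ as in \cite{bbm10}. The key geometric observation is that the dominant contributions to the Dirichlet form come from narrow tubes around each saddle point $z_k^*$ of height close to $\wh S_0$, and in each such tube $S^N$ is well-approximated by its quadratic part, reducing the problem to a one-dimensional Gaussian computation along the unstable direction.

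For the upper bound I would let $a^*$ be the minimizer of the discrete Dirichlet form \eqref{eq:capacity.1} defining $C^*(N,\e)$, diagonalize $HS^N(z_k^*)$ in the $\cF$-basis with $y_-$ the coordinate of the unique negative eigenvalue $\la^-_N(z_k^*)$, and inside the box $\cC_k=\acc{\norm{y}_{\infty,\cF}<\de_N}$ of radius $\de_N=K\sqrt{\e\abs{\ln\e}}$ set
\[
h(z_k^*+y)=a^*_{k-}+(a^*_{k+}-a^*_{k-})\,\frac{\int_{-\de_N}^{y_-}e^{-\abs{\la_N^-}t^2/(2\e)}\,\dt}{\int_{-\de_N}^{\de_N}e^{-\abs{\la_N^-}t^2/(2\e)}\,\dt},
\]
extending $h$ to be locally constant equal to $a^*(x_j^*)$ in each basin via a cutoff. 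Quadratic Taylor expansion of $S^N$ at $z_k^*$, the eigenvalue a priori bounds of Lemma \ref{lem:eigenvaluesN}, and standard Gaussian integration then give
\[
\e\int_{\cC_k}\norm{\grad h}^2_{2,N}\,\dint\mu^N=\e\sqrt{2\pi\e}^{N-2}w^*_k(a^*_{k+}-a^*_{k-})^2\pare{1+O(\sqrt{\e}\abs{\ln\e}^{3/2})},
\]
while the contribution outside $\cup_k\cC_k$ is of strictly smaller exponential order $e^{-(\wh S_0+\eta)/\e}$. Summing over saddles yields $\ccE^N(h)\ieg\e\sqrt{2\pi\e}^{N-2}C^*(N,\e)(1+\psi_1(\e,N))$.

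For the lower bound I would evaluate $\ccE^N(h^*)$ on the equilibrium potential $h^*$ by restricting the integration to $\cup_k\cC_k$ and discarding the transverse derivatives. In each tube, the one-dimensional Cauchy--Schwarz inequality
\[
\int_{-\de_N}^{\de_N}(\del_{y_-}h^*)^2\,e^{-\abs{\la_N^-}y_-^2/(2\e)}\,\dint y_-\seg\frac{\pare{h^*(\de_N,\cdot)-h^*(-\de_N,\cdot)}^2}{\int_{-\de_N}^{\de_N}e^{\abs{\la_N^-}y_-^2/(2\e)}\,\dint y_-}
\]
together with Laplace integration over the transverse coordinates produces a quadratic form in the face values of $h^*$. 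A maximum principle argument, combined with the large deviation estimates of Lemma \ref{lem:ldp}, then shows that $h^*$ is essentially constant inside each connected sublevel component of $\acc{S^N<\wh S_0-\eta}$ up to an error $O(e^{-\eta/(2\e)})$; consequently the face values provide an admissible test vector for the discrete Dirichlet problem \eqref{eq:capacity.1}, and minimization reproduces $C^*(N,\e)$ with the same relative error as in the upper bound.

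The main obstacle is uniformity in $N$: a priori both the cubic Taylor remainder and the truncation of Gaussian tails could produce $N$-dependent errors. The resolution, following \cite{bbm10}, is to measure the box size in the norm $\norm{\cdot}_{\infty,\cF}$ of Remark \ref{rem:norm} rather than the Euclidean norm. The Hausdorff--Young-type inequality \eqref{eq:norm.12} together with the uniform bound on $V'''$ given by Assumption \ref{hyp:V} allows the cubic remainder to be controlled by $C\de_N^3/\e=CK^3\sqrt{\e}\abs{\ln\e}^{3/2}$, independent of $N$. Choosing $K$ sufficiently large makes the Gaussian truncation tails of order $\e^{cK^2}$, smaller than any prescribed power of $\e$ uniformly in the transverse eigenvalues (which by Lemma \ref{lem:eigenvaluesN} grow like $k^2/N$, only making the Gaussians narrower). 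Combining these ingredients gives $\abs{\psi_1(\e,N)}<\sqrt{\e}\abs{\ln\e}^{3/2}$ for all $N>N_0$, as required.
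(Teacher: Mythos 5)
Your overall scheme---matching upper and lower bounds via the Dirichlet variational principle, reducing to one-dimensional Gaussian computations along the unstable direction in boxes adapted to the $\cF$-basis, and quoting \cite{bbm10} for uniformity in $N$---is indeed the scheme of the paper (Propositions~\ref{prop:upper} and~\ref{prop:lower}). However, two of the points where you wave your hands are exactly where the paper has to work hardest, and in both places the specific device you propose would not go through.

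\textbf{Box shape.} You take $\cC_k=\acc{\norm{y}_{\infty,\cF}<\de_N}$ and claim that the Hausdorff--Young bound \eqref{eq:norm.12} gives a cubic remainder $\ieg C\de_N^3/\e$ uniformly in $N$. It does not. With $|\wt y_l|\ieg\de_N$ for all $l$ one has $\norm{y}_{3/2,\cF}\ieg N^{2/3}\de_N$, so \eqref{eq:norm.12} with $p=3$, $q=3/2$ yields $\frac1N\norm{y}_{3,N}^3\ieg C\pare{\frac1{\sqrt N}\norm{y}_{3/2,\cF}}^3\ieg CN^{1/2}\de_N^3$, a bound that \emph{diverges} with $N$. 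The paper's box \eqref{eq:approx.1} is instead $\abs{y_l}\ieg\de\,r_l/\sqrt{\abs{\la_{N,l}}}$ for a fixed summable-tail sequence $(r_l)$ with $\sum_l r_l^{3/2}/l^{3/2}<\infty$; combined with the a priori growth $\abs{\la_{N,l}}\seg cl^2/N$ of Lemma~\ref{lem:eigenvaluesN}, this produces $\frac1N\norm{y}_{3,N}^3\ieg C\de^3$ uniformly in $N$. The shrinking radii in the higher $l$-modes are not an optional refinement but are what makes the cubic term summable; a uniform $\cF$-ball, however the Gaussian tails are handled, does not give the dimension-free error you need.

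\textbf{Lower bound geometry.} You restrict the Dirichlet form to short tubes of length $O(\de_N)$ straddling each saddle and invoke a maximum-principle/LDP argument to argue $h^*$ is essentially constant on connected sublevel components, so that the face values feed into the discrete conductance problem. But the tube faces sit at height $S^N\approx\wh S_0-O(\e\abs{\ln\e})$, which is \emph{not} below $\wh S_0-\eta$ for fixed $\eta$ as $\e\to0$; the sublevel-set argument you sketch does not reach those points, and you would need an additional quantitative bound on the equilibrium potential at heights within $O(\e\abs{\ln\e})$ of the saddle, uniformly in $N$. The paper sidesteps this entirely: it builds curved $C^2$ corridors $\g(s,y_\bot)$ running from $x^*_{i-}$ all the way to $x^*_{i+}$ through $z^*_i$, with the transverse box $C^{N,\bot}_\de$ and a $y_\bot$-dependent Jacobian $g_i(s,y_\bot)$, so that the boundary conditions of the one-dimensional minimization along $s$ are taken directly at the minima, giving $a^{i\pm}(y_\bot)=h(x_{i\pm}+A(\pm s_\pm)y_\bot)$; the dependence on $y_\bot$ is then killed by the $e^{-C/\e}$ bound of Proposition~\ref{prop:initial} (Equation~\eqref{eq:initial.2}). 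The paper explicitly remarks that straight rectangular corridors as in \cite{bbm10} are not adequate here because the multi-saddle geometry forces a genuinely curved path; your construction is closer to the straight-corridor variant and hence inherits its limitation.

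So the proof strategy is the same in spirit, but the two technical commitments you make---uniform $\cF$-ball and short saddle-tubes---are precisely the places the paper has to modify the \cite{bbm10} machinery, and as stated your version leaves $N$-dependent errors and an unproved control on the equilibrium potential near the saddles.
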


The proof of this result is an adaptation to the case of a finite number of saddle points of Proposition 4.3 in \cite{bbm10}. The estimate of the capacity is made in two steps: an upper bound and a lower bound.

\subsubsection{Upper bound}\label{sssec:upper}

We have the following proposition.
\begin{proposition}\label{prop:upper}
For all $\e<\e_0$ and $\rho$, we have
\be\label{eq:upper.1}
\capa{\cB_{\rho}^N(x_{l_0}^*),\cB_{\rho}^N(\cM^N_{l})}
\ieg\e \sqrt{2\pi\e}^{N-2}C^*(N,\e)(1+\psi_u(\e,N))
\ee
where $\limsup_{N\to\infty}\abs{\psi_u(\e,N)}<\sqrt{\e}\abs{\ln(\e)}^{3/2}$.
\end{proposition}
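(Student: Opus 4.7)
The plan is to apply the Dirichlet variational principle \eqref{eq:estimate.3}: for each real vector $a$ indexed by $\cM^N$ with $a(x^*_{l_0})=1$ and $a(x^*_j)=0$ for every $x^*_j\in\cM^N_l$, I construct an admissible test function $h_a$ satisfying
\be
\ccE^N(h_a)\ieg\e\sqrt{2\pi\e}^{N-2}\, Q^N(a)(1+\psi_u(\e,N)).
\ee
Taking the infimum over such $a$ then yields \eqref{eq:upper.1} via \eqref{eq:capacity.1}. The function $h_a$ is defined piecewise: it equals the constant $a_j$ on a small neighborhood of each minimum $x^*_j\in\cM^N$; in a tubular neighborhood $U_k$ of each saddle $z^*_k$, working in the orthonormal eigenbasis $(v_l)_{l=1}^N$ of $HS^N(z^*_k)$ with unstable direction $v_1$ (eigenvalue $\la_k^-<0$) and writing $\tilde y=y-z^*_k$ in that basis, I set, for $\abs{\tilde y_l}<\de_\e:=\sqrt{C\e\abs{\ln\e}}$,
\be
h_a(y)=a_{k-}+(a_{k+}-a_{k-})\,\frac{\int_{-\de_\e}^{\tilde y_1}e^{-\abs{\la_k^-}s^2/(2\e)}\,\dint s}{\int_{-\de_\e}^{\de_\e}e^{-\abs{\la_k^-}s^2/(2\e)}\,\dint s};
\ee
in the remaining gluing region $h_a$ is smoothly interpolated while staying in $\{S^N>\wh S_0-\eta/2\}$, so that its contribution to the Dirichlet form is exponentially subdominant by standard large deviation arguments.

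The central computation is to evaluate, for each saddle, the local Dirichlet integral $\int_{U_k}\norm{\grad h_a}^2 e^{-S^N(y)/\e}\,\dy$. Taylor expansion of $S^N$ to second order at $z^*_k$ gives $S^N(y)\approx S^N(z^*_k)-\tfrac12\abs{\la_k^-}\tilde y_1^2+\tfrac12\sum_{j\seg2}\la_{k,j}^+\tilde y_j^2$; the $N-1$ transverse Gaussians integrate to $\sqrt{2\pi\e}^{\,N-1}/\sqrt{\abs{\det HS^N(z^*_k)}/\abs{\la_k^-}}$, while in the unstable direction the explicit denominator in the definition of $h_a$ together with $\norm{\grad h_a}^2\propto e^{-\abs{\la_k^-}\tilde y_1^2/\e}$ produces $\sqrt{\abs{\la_k^-}/(2\pi\e)}$. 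Combining these factors, multiplying by $\e$, and summing over the finitely many saddles labelling the edges of the graph, the leading contribution is $\e\sqrt{2\pi\e}^{\,N-2}\sum_k w_k^*(a_{k+}-a_{k-})^2=\e\sqrt{2\pi\e}^{\,N-2}Q^N(a)$.

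The main obstacle is controlling the multiplicative error $\psi_u(\e,N)$ uniformly in the unbounded dimension $N$. Three types of error appear: truncation of the Gaussian integrals to $U_k$, the contribution of the gluing regions, and higher-order Taylor corrections to $S^N$. The first two are handled via the eigenvalue lower bound $N\la_{k,N}(z^*_k)\seg m(A)k^2-1$ of Lemma \ref{lem:eigenvaluesN}, which makes the transverse Gaussian tails summable with a bound independent of $N$, together with the choice of $\de_\e$ and the exponential separation from the gluing regions. The delicate term is the cubic remainder $\frac1N\sum_i V'''(\cdot)\tilde h_i^3$, which in the eigenbasis produces cross terms whose naive pointwise bound scales with $N$. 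To obtain a dimension-free estimate I would apply the Hausdorff--Young inequality of Remark \ref{rem:norm}, trading $\norm{\cdot}_{3,N}$ against $\norm{\cdot}_{3/2,\cF}$, combined with Gaussian moment bounds controlled by the same eigenvalue estimate. This yields corrections of order $\de_\e^3/\e=O(\sqrt{\e}\abs{\ln\e}^{3/2})$, exactly matching the stated rate.
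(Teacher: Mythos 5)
Your overall strategy --- Dirichlet variational principle, one-dimensional interpolation profile along the unstable direction near each saddle, Hausdorff--Young to control the cubic remainder uniformly in $N$ --- matches the paper's. The gap is the geometry of the saddle neighborhood $U_k$: you take the cube $\acc{\abs{\tilde y_l}<\de_\e}$ with the same half-width $\de_\e=\sqrt{C\e\abs{\ln\e}}$ in every eigendirection, but by Lemma \ref{lem:eigenvaluesN} the eigenvalues $\la_{N,l}$ of $HS^N(z^*_k)$ scale as $l^2/N$, so the local Gaussian widths $\sqrt{\e/\la_{N,l}}$ range from $\sqrt{\e N}$ in the low modes down to $\sqrt{\e/N}$ in the high ones. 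In particular $\abs{\la_k^-}\sim 1/N$, so $\int_{-\de_\e}^{\de_\e}e^{-\abs{\la_k^-}s^2/(2\e)}\ds\approx 2\de_\e\ll\sqrt{2\pi\e/\abs{\la_k^-}}$ for large $N$: your profile is too steep in the unstable direction, and the resulting Dirichlet integral overshoots the claimed leading order by a factor of order $\sqrt{N/\abs{\ln\e}}$. In the high modes the cube is far wider than the Gaussian, so the quadratic approximation at its corners fails by $O(N\e\abs{\ln\e})$. Moreover the Hausdorff--Young bound of the cubic term on the uniform cube gives only $\norm{\tilde y}_{3/2,\cF}^{3/2}\ieg N\de_\e^{3/2}$, hence an error of order $N^{1/2}\de_\e^3/\e$, which diverges with $N$; the cancellation you need does not occur.

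The remedy --- which is exactly what the paper does in Equation \eqref{eq:approx.1} --- is to scale the box to the Gaussian in each direction: take $\abs{\tilde y_l}\ieg\de\, r_l/\sqrt{\abs{\la_{N,l}}}$ with $\de=K\sqrt{\e\abs{\ln\e}}$ and a fixed sequence $(r_l)$ satisfying $\sum_l r_l^{3/2}/l^{3/2}<\infty$. Then the box is $\sim\sqrt{\abs{\ln\e}}$ times the Gaussian width in every mode, the quadratic form at the boundary stays of size $\de^2 r_l^2$ uniformly, and the Hausdorff--Young estimate yields $\frac1N\norm{\tilde y}_{3,N}^3\ieg C\de^3\pare{\sum_l r_l^{3/2}/l^{3/2}}^2$, a dimension-free error of order $\de^3/\e=O(\sqrt{\e}\abs{\ln\e}^{3/2})$. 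A smaller point: your gluing region $\acc{S^N>\wh S_0-\eta/2}$ contains points $\eta/2$ below the saddle level, where $e^{-S^N/\e}$ is exponentially larger than the saddle weight, so the gluing contribution would not be subdominant; the paper confines the gluing to $S_{N,\de}=\acc{S^N\seg S^N(z^*_i)+c\de^2}$ of Equation \eqref{eq:upper.4}, strictly above the saddle, which is what actually makes that term negligible.
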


\begin{proof}
The proof of this upper bound follows the proof of Lemma 4.4 in \cite{bbm10}.
To obtain an upper bound for the capacity, we just estimate the Dirichlet form on a test function $h^+$. $h^+$ is defined on some neighborhood $C^N_{\de}(z^*_i)$ of each saddle point $z^*_i$ for some $\de>0$ small enough.

In the local orthonormal basis (given by coordinates $y^{(i)}\in\bbR^N$) of the saddle point $z^*_i$, the neighborhood $C^N_{\de}(z^*_i)$ is defined by
\be\label{eq:approx.1}
C^N_{\de}(z^*_i)=\acc{y^{(i)}\in
\bbR^N:\,|y^{(i)}_l|\ieg \de\frac{r_{l}}{\sqrt{\abs{\la_{N,l}}}},\, 0\ieg
l\ieg N-1}+z^*_i
\ee
where $(r_l)$ is a sequence satifying $\sum_{l}\frac{r_l^{3/2}}{l^{3/2}}<\infty$ and $(\la_{N,l})_l$ are the eigenvalues in the increasing order of $HS^N(z^*_i)$.
Let us denote $C^N_{\de}=\cup_{i}C^N_{\de}(z^*_i)$.

Let us consider
\be\label{eq:upper.4}
S_{N,\de}=\acc{x, S^N(x)\seg S^N(z^*_i)+c\de^2, \forall i}.
\ee
The set $(S_{N,\de}\cup C^N_{\de})^c$ contains a finite number of connected components denoted $D_j$ since each of them contains at least a minimum $x^*_j$ (which are in finite number by Assumption \ref{hyp:S}). For each connected component $D_j$, we define $h^+$  to be the constant $a_j\in[0,1]$.
For a saddle $z^*_i$, we denote $D_{i+}$ and $D_{i-}$ the connected components attained from $z^*_i$ when $y^{(i)}=(\de\s_0,0)$ and $y^{(i)}=(-\de\s_0,0)$ respectively.

On $S_{N,\de}\setminus C^N_{\de}$, we take $h^+$ of class $C^1$ and such that $\norm{\nabla h^+}_{2,N}\leq \frac{c_1}{\de}$. Then we define $h^+$  on each $C^N_{\de}(z^*_i)$ in the local coordinates, by
$h^+(y^{(i)})=f_i(y^{(i)}_0)$
where
\be\label{eq:upper.5b}
f_i(y_0)=(a^{i-}-a^{i+})\frac{\int_{y_0}^{\de\s_0} e^{-\abs{\la_{N,0}}t^2/2\e}\dt}
{\int_{-\de\s_0}^{\de\s_0}e^{-\abs{\la_{N,0}}t^2/2\e}\dt}+a^{i+}.
\ee

Therefore, we have to estimate $\cE^N(h^+)=\sum_{i}I_1(i)+I_2$ with
\begin{align}\label{eq:upper.7}
I_1(i)&=\e\int_{C^N_{\de}(z^*_i)}\norm{\grad h^+(x)}^{2}_{2,N}e^{-\frac{S^N(x)}{\e}}\dx,&
I_2&=\e\int_{S_{N,\de}\setminus B^N_{\de}}\norm{\grad h^+(x)}^2_{2,N}e^{-\frac{S^N(x)}{\e}}\dx.
\end{align}

Taking $\de=K\sqrt{\e\abs{\ln\e}}$, the integrals $I_1(i)$ give us the right asymptotics and are estimated by an adaptation of Lemma 4.4 from \cite{bbm10}. The quadratic approximation of the potential on the sets $C^N_{\de}(z^*_i)$ is a consequence of Remark \ref{rem:norm} and of the choice of the sets $C^N_{\de}(z^*_i)$.
The integral $I_2$ is computed by following the same method as in Lemma 4.6 in \cite{bbm10}.

Therefore, we obtain that for all $(a_j)_j$, for $N\seg N_0(\e)$
\begin{align}\label{eq:upper.4b}\nonumber
\capa{\cB_{\rho}^N(x^*),\cB_{\rho}^N(\cM^N_{l})}
&\ieg\sum_i\e \sqrt{2\pi\e}^{N-2}
\frac{(a^{i-}-a^{i+})^2\abs{\la_{N,0}}e^{-\frac{S^N(z^*_i)}{\e}}}{\sqrt{|\det(H S^N(z^*_i))|}}
(1+A_1\sqrt{\e}\abs{\ln(\e)}^{3/2}).
\end{align}
Taking the minimum of the right-hand side over $a$, we get the result \eqref{eq:upper.1}.
\end{proof}

\subsubsection{Lower bound}\label{sssec:lower}

We now prove the corresponding lower bound.
\begin{proposition}\label{prop:lower}
For all $\e<\e_0$ and $\rho$, we have
\be\label{eq:lower.1}
\capa{\cB_{\rho}^N(x^*),\cB_{\rho}^N(\cM^N_{l})}
\seg \e \sqrt{2\pi\e}^{N-2}C^*(N,\e)(1+\psi_l(\e,N))
\ee
where $\limsup_{N\to\infty}\abs{\psi_l(\e,N)}<\sqrt{\e}\abs{\ln(\e)}^{3/2}$.
\end{proposition}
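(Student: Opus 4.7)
The plan is to adapt the lower-bound strategy of Lemma 4.6 in \cite{bbm10} to the graph of saddles rather than a single saddle, and to propagate the uniformity in $N$ carefully. The starting point is \eqref{eq:estimate.3}: using the equilibrium potential $h^*$ as the minimiser and restricting the Dirichlet integral to the union $\cup_i C^N_\de(z^*_i)$ of saddle neighbourhoods introduced in \eqref{eq:approx.1}, I would obtain
\be
\capa{\cB^N_\rho(x^*_{l_0}),\cB^N_\rho(\cM^N_l)}\seg\sum_i\e\int_{C^N_\de(z^*_i)}\norm{\grad h^*(x)}^2_{2,N}e^{-S^N(x)/\e}\dx,
\ee
with $\de=K\sqrt{\e\abs{\ln\e}}$. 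The task reduces to a separate lower bound on each summand.

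Inside $C^N_\de(z^*_i)$ I would switch to the local orthonormal frame $y^{(i)}$ diagonalising $HS^N(z^*_i)$ and replace $S^N$ by its quadratic part. The cubic remainder is controlled thanks to the choice of radii $r_l/\sqrt{|\la_{N,l}|}$ together with Remark~\ref{rem:norm}, yielding an error of order $O(\sqrt{\e}\abs{\ln\e}^{3/2})$ uniformly in $N$, exactly as in the upper bound proof. Fixing the unstable coordinate $y_0$ and integrating over the $N-1$ stable ones, the spectral bound of Lemma~\ref{lem:eigenvaluesN} ensures that the truncated Gaussian integrals agree with the full-space ones up to an exponentially small error $e^{-c/\e}$, and produces the factor $\sqrt{2\pi\e}^{N-1}/\sqrt{\prod_{l\seg 1}|\la_{N,l}|}$.

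On the unstable line I would apply the one-dimensional Cauchy--Schwarz inequality
\be
\int_{-\de\s_0}^{\de\s_0}(\del_0 h^*)^2\,e^{|\la_{N,0}|y_0^2/(2\e)}\dint y_0\;\seg\;\frac{(h^*(\de\s_0,y')-h^*(-\de\s_0,y'))^2}{\int_{-\de\s_0}^{\de\s_0}e^{-|\la_{N,0}|y_0^2/(2\e)}\dint y_0},
\ee
where $y'=(y_1,\dots,y_{N-1})$ are the stable coordinates. Combined with the Gaussian asymptotics of the denominator, this yields the factor $\sqrt{|\la_{N,0}|/(2\pi\e)}$, which together with the stable-direction computation produces exactly the weight $w^*_i=|\la^-(z^*_i)|/\sqrt{|\det HS^N(z^*_i)|}$ appearing in \eqref{eq:weight.1}. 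It remains to replace the boundary values $h^*(\pm\de\s_0,y')$ by constants $a^*(x^*_{i\pm})$ attached to the two valleys $D_{i\pm}$ linked by $z^*_i$: invoking the contractivity of Lemma~\ref{lem:contr} together with the large-deviation estimates of Lemma~\ref{lem:ldp}, $h^*$ is essentially constant on each valley up to an error $e^{-c/\e}$. Since $a^*$ satisfies the admissibility constraints $a^*(x^*_{l_0})=1$ and $a^*(x^*_j)=0$ on $\cM^N_l$, summing over $i$ gives $\sum_i w^*_i(a^*_{i+}-a^*_{i-})^2=Q^N(a^*)\seg C^*(N,\e)$, which is precisely \eqref{eq:lower.1}.

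The main obstacle is the uniform-in-$N$ replacement of $h^*$ by valley-constants on the boundary of each saddle neighbourhood. In fixed dimension this is standard potential theory; here one needs sharp exponential control with constants independent of $N$, and I expect this to come from combining Lemma~\ref{lem:contr} with a Markov-chain reduction on the valley graph along the lines of Section~4.3 in \cite{bbm10}, provided the dependence of all implicit constants on $N$ is tracked through Lemma~\ref{lem:eigenvaluesN} and Proposition~\ref{prop:control.eigen}. All other ingredients --- the cubic remainder in the quadratic approximation, the truncation of the stable Gaussian factors, and the one-dimensional Cauchy--Schwarz step --- reduce to routine Gaussian computations once the uniform spectral bounds of Section~\ref{sec:prefactor} are in hand.
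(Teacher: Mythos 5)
Your overall template is the right one (restrict the Dirichlet form, diagonalize, quadratic approximation, one-dimensional Cauchy--Schwarz, minimize over valley constants), but the specific domain on which you restrict the Dirichlet form introduces a gap that the paper's proof is careful to avoid. You integrate only over the small saddle neighbourhoods $C^N_\de(z^*_i)$, so your one-dimensional Cauchy--Schwarz produces the quantity $\bigl(h^*(\de\s_0,y')-h^*(-\de\s_0,y')\bigr)^2$, evaluated at the \emph{faces of the saddle box}. Those points lie at a distance $O(\sqrt{\e|\ln\e|})$ from $z^*_i$ and at potential height only $O(\e|\ln\e|)$ below $S^N(z^*_i)$; they are not in the basins of attraction of the minima. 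The contractivity in Lemma~\ref{lem:contr} and the large-deviation estimates of Lemma~\ref{lem:ldp} give near-constancy of $h^*$ on small $\rho$-neighbourhoods of the minima (uniformly in $N$, with $\rho$ fixed), not on the boundary of a box that shrinks with $\e$ right next to the saddle. Without a separate argument (e.g.\ a Harnack-type oscillation bound for the $\e$-harmonic $h^*$ near the saddle, or an extension of the variational domain), you cannot replace $h^*(\pm\de\s_0,y')$ by the valley constants $a^*(x^*_{i\pm})$ up to an acceptable error, so the chain ``$\sum_i w^*_i(a^*_{i+}-a^*_{i-})^2 \seg C^*(N,\e)$'' is not yet established.

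The paper sidesteps this exactly by \emph{not} stopping at the saddle box. It builds curvilinear corridors $C_\de(z^*_i)$ following a $C^2$ path $\g_0$ that runs all the way from $x^*_{i-}$ to $x^*_{i+}$ through $z^*_i$, with a slowly-varying orthonormal frame $A(s)$ transverse to $\g_0$. The one-dimensional variational problem along $s$ is then solved on the whole interval $[-s_-,s_+]$ with boundary data $a^{i\pm}(y_\bot)=h^*\bigl(x^*_{i\pm}+A(\pm s_\pm)y_\bot\bigr)$, i.e.\ taken at points genuinely inside the two valleys, where Proposition~\ref{prop:initial} (Eq.~\eqref{eq:initial.2}) controls the dependence on $y_\bot$ up to $e^{-C/\e}$. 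The price of this construction is bookkeeping of the Jacobian $g_i(s,y_\bot)$ of the change of variables and the splitting $I_6+I_7$ of the resulting $s$-integral, with Lemma~\ref{lem:lower2} showing that the contribution away from the saddle is exponentially negligible (with an explicit and harmless $\sqrt N$ factor from the path length). If you insist on the small-box route, you would instead need to add the missing local estimate on $h^*$ near $z^*_i$; as written, your valley-constant step is the one place where the argument does not close.
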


\begin{proof}
The proof is adapted from \cite{bbm10}. For a saddle point $z^*_i$, we take a narrow corridor from one (local) minimum to another one and minimize the Dirichlet form on the union of these corridors. In \cite{bbm10}, this corridor was a rectangle because of the particular case considered. In this article, we have to be more precise about their construction.
We use the same notations as in the proof of the upper bound. 

Let us fix $\de_0$. We consider the subset of $\bbR^{N-1}$
\be\label{eq:lower.2}
C^{N,\bot}_{\de}(z^*_i)=\acc{y^{(i)}\in
\bbR^N:\,|y^{(i)}_l|\ieg \de\frac{r_{l}}{\sqrt{\abs{\la_{N,l}}}},\, 1\ieg
l\ieg N-1}.
\ee
and we define $C^{N}_{\de}(z^*_i)=[-\de_0,\de_0]\times C^{N,\bot}_{\de}(z^*_i)+z^*_i$.
We denote by $x^*_{i-}$ and $x^*_{i+}$ the two minima of the basins surrounding $z^*_i$.

Let $(\g_0(s))_{s\in[-s_-,s_+]}$ be a regular $C^2$ path from $x_{i-}$ to $x_{i+}$ with $\g_0(s)=z^*_i+(s,0)$ for $s\in[-\de_0,\de_0]$.
We also suppose that there is $\eta>0$ for which $S^{N}(\g_0(s))<S^N_0-3\eta$ for $\abs{s}\seg\de_0$ and that $\norm{\g_0'(s)}_{2,N}=1$.
Let, for all $s$, $A(s)$ be an isomorphism from $\bbR^{N-1}$ to $\g'_0(s)^{\bot}\subset\bbR^N$ of class $C^1$ in $s$ and such that for $\abs{s}<\de_0$, $A(s)y=(0,y_1,\cdots,y_{N-1})$. Then we construct a family of paths $\g(s,y_{\bot})$ by
\be\label{eq:lower.4}
\g(s,y_{\bot})=\g_0(s)+A(s)y_{\bot}.
\ee
Such a construction of a path $\g_0$ is always possible in the infinite dimensional setting (because of Assumption \ref{hyp:S}). Then taking the finite dimensional projection, it gives us a path for the finite dimensional case.

We define the corridor from $x_{i-}$ to $x_{i+}$, for $\de>0$ small enough
\begin{align}\label{eq:lower.5}
C_{\de}(z^*_i)
&=\acc{x=\g(s,y_{\bot}),y_{\bot}\in C^{N,\bot}_{\de}(z^*_i), \forall s}.
\end{align}
Let $h$ be the equilibrium potential which realizes the minimum of the Dirichlet form and define $a^{i\pm}(y_{\bot})=h(x_{i\pm}+A(\pm s_\pm)y_{\bot})$, the values near the minimum.
\smallskip

To estimate a lower bound, we are going to restrict the Dirichlet form on the union of the corridors $C_{\de}(z^*_i)$:
\begin{align}\label{eq:lower.6}
\cE^N(h)
&=\e\int_{\bbR^N}\norm{\grad h}^2_{2,N}\mu^{N}(\dx)
\seg\sum_{i}\e\int_{C_{\de}(z^*_i)}\norm{\grad h}^2_{2,N}\mu^{N}(\dx)
=\e\sum I_5(i).
\end{align}

We define the function $f_i$ on $C_{\de}(z^*_i)$, by $f_i(s,y_{\bot})=h(\g(s,y_\bot))$. The change of variable on $C_{\de}(z^*_i)$ gives us the Jacobian $g_i(s,y_{\bot})=\det(J\g)(s,y_\bot)$ and we obtain
\begin{align}\label{eq:lower.7}
I_5(i)
&\seg
\int_{B^{N,\bot}_{\de}(z^*_i)}\int_{-s_-}^{s_+}\abs{\frac{\del f_i}{\del s}}^2e^{-S^N(\g(s,y_{\bot}))/\e}g_i(s,y_{\bot})\ds\dy_{\bot}.
\end{align}
We take $y_{\bot}$ as a parameter then the second term is bounded below by the minimum over functions $f_i$ of the integral
\be\label{eq:lower.8}
\int_{-s_-}^{s_+}\abs{\frac{\del f_i}{\del s}}^2e^{-S^N(\g(s,y_{\bot}))/\e}g_i(s,y_{\bot})\ds
\ee
with the conditions $f_i(-s_-,y_{\bot})=h(x_{i-}+A(-s_-)y_{\bot})=a^{i-}(y_{\bot})$ and $f_i(s_+,y_{\bot})=h(x_{i+}+A(s_+)y_{\bot})=a^{i+}(y_{\bot})$. This gives us a lower bound for the capacity.

A simple computation shows that the function $f_i$ realizing this lower bound is
\be\label{eq:lower.9}
f_i(s,y_\bot)=(a^{i+}(y_{\bot})-a^{i-}(y_{\bot}))\frac{\int_{-s_-}^{s}e^{S^N(s,y_{\bot})/\e}g_i(s,y_{\bot})^{-1}\ds}
{\int_{-s_-}^{s_+}e^{S^{N}(s,y_{\bot})/\e}g_i(s,y_{\bot})^{-1}\ds}+a^{i-}(y_{\bot}).
\ee
Inserting this function in the integral \eqref{eq:lower.7}, we obtain
\be\label{eq:lower.10}
I_5(i)
\seg 
\int_{C^{N,\bot}_{\de}(z^*_i)}(a^{i+}(y_{\bot})-a^{i-}(y_{\bot}))^2\cro{\int_{-s_-}^{s_+}e^{S^{N}(s,y_{\bot})/\e}g_i(s,y_{\bot})^{-1}\ds}^{-1}\dy_{\bot}.
\ee
The end of the proof comes from an upper bound of the integral uniformly for $y_{\bot}\in C^{N,\bot}_{\de}(z^*_i)$. We write
\be\label{eq:lower.11}
\int_{-s_-}^{s_+}e^{S^{N}(s,y_{\bot})/\e}g_i(s,y_{\bot})^{-1}\ds
= I_6(i)+I_7(i)
\ee
where
\begin{align}\label{eq:lower.12}
I_6(i)
&=\int_{-\de_0}^{\de_0}e^{S^{N}(s,y_{\bot})/\e}g_i(s,y_{\bot})^{-1}\ds
&\text{and}&&
I_7(i)
&=\int_{\abs{s}>\de_0}e^{S^{N}(s,y_{\bot})/\e}g_i(s,y_{\bot})^{-1}\ds.
\end{align}

As in Lemma 4.8 in \cite{bbm10}, we control the quadratic approximation near the saddle $z^*_i$ with the following lemma for which we omit the proof.
\begin{lemma}\label{lem:approx.low}
For all $y=(s,y_{\bot})\in C^{N}_{\de}(z^*_i)$, if the sequence $(r_l)_l$ satisfies $\sum_{l}\frac{r_l^{3/2}}{l^{3/2}}<\infty$, we have for $\de_0\seg\de$
\begin{align}\label{eq:approx.low.1}
\abs{S^{N}(\g(s,y_{\bot})+z^*_i)-S^{N}(\g(0,y_{\bot})+z^*_i)+\frac1{2}\abs{\la_{0,N}}s^2}&\ieg A_6\de_0^3\\\label{eq:approx.low.2}
\abs{S^N(z^*_i+\g(0,y_{\bot}))-S^N(z^*_i)-\frac12\sum_{k=1}^{N-1}\la_{N,k}y_k^2}&<A_8\de^3.
\end{align}
\end{lemma}

Following the proof of Lemma 4.7 in \cite{bbm10}, we can also prove the existence of a constant $A_6$ such that for all $N$ and $y_{\bot}$
\be\label{eq:lower1.1}
I_6(i)\ieg e^{\frac{S^N(z^*_i+(0,y_{\bot}))}{\e}}
\sqrt{\frac{2\pi\e}{\abs{\la_{N,0}}}}\pare{1+A_6\frac{\de_0^{3}}{\e}}.
\ee
%

In addition, we need to prove an upper bound for the integral $I_7(i)$.
\begin{lemma}\label{lem:lower2}
There exists a constant $A_7$ such that for all $N$ and $y_{\bot}$
\be\label{eq:lower2.1}
I_7(i)\ieg A_7\sqrt{N}e^{\frac{\wh S-2\eta}{\e}}
\ee
where $\eta>0$ is given by the definition of the path $\g_0$.
\end{lemma}

\begin{proof}
We have to be careful with the change of variable. Let us write the Jacobian matrix $J\g(s,y_{\bot})$ in the local base $(\g'_0(s),\g'_0(s)^{\bot})$, if we denote $P_0$ the projection on $Span(\g'_0(s))$, we get the Jacobian matrix (written by blocks)
\be\label{eq:lower2.2}
J\g(s,y_{\bot})
=
\begin{pmatrix}
1+P_0(A'(s)y_{\bot})&0\\
*&A(s)
\end{pmatrix}
\ee
since $\mathrm{Im}A(s)=\g'_0(s)^{\bot}$. 
Then, as $A(s)$ is an isometry, we obtain that
\be\label{eq:lower2.3}
g_i(s,y_{\bot})=\abs{\det(J\g(s,y_{\bot}))}=\abs{1+P_0(A'(s)y_{\bot})}=1+O(\de).
\ee
Thus, for $\de$ sufficiently small, 
\begin{align}\label{eq:lower2.4}\nonumber
I_7(i)
&=
\int_{\abs{s}>\de_0}e^{S^{N}(s,y_{\bot})/\e}g_i(s,y_{\bot})^{-1}\ds
\ieg(1+C\de)e^{\frac{\wh S-2\eta}{\e}}(s_++s_-)
\ieg 2(s_++s_-)e^{\frac{\wh S-2\eta}{\e}}
\end{align}
since $S^{N}(s,y_{\bot})<\wh S-2\eta$ for all $\abs{s}>\de_0$, and $y_{\bot}\in C^{N,\bot}_{\de}$. Then by construction of the path we have that  
\be\label{eq:lower2.5}
s_++s_-\ieg C\norm{x_{i-}-x_{i+}}_{2,N}\ieg C\sqrt{N}\norm{x_{i-}-x_{i+}}_{L^2}.
\ee
\end{proof}

We insert \eqref{eq:lower1.1} and \eqref{eq:lower2.1} in Equation \eqref{eq:lower.10}. Then  we proceed as in the proof of Lemma 4.7 from \cite{bbm10} and we obtain
\begin{align}\label{eq:lower.7b}
I_5(i)
&\seg \e \sqrt{\frac{\abs{\la_{N,0}}}{2\pi\e}}
\int_{B^{N,\bot}_{\de}(z^*_i)}(a^{i+}(y_{\bot})-a^{i-}(y_{\bot}))^2e^{-\frac{S^N(z^*_i+(0,y_{\bot}))}{\e}}\dy_{\bot}
\cro{1+A_6\frac{\de_0^{3}}{\e} +A'_7e^{-\frac{\eta}{\e}}}^{-1}.
\end{align}
Using Equation \eqref{eq:initial.2} from Proposition \ref{prop:initial}, we obtain for all $y_{\bot}$, $\abs{a^{j}(y_{\bot})-a^{j}(0)}<e^{-\frac {C}{\e}}$. Then using the approximation \eqref{eq:approx.low.2} and following the proof of Lemma 4.7 in \cite{bbm10}, we obtain
%
for $\de=\sqrt{K\e\abs{\ln{\e}}}$ and $\de_0=K'\e\abs{\ln\e}$ with $K'>K$,
\be\label{eq:lower.11b}
I_5(i)
\seg
\e (a^{i-}-a^{i+})^2e^{-\frac{S^N(z^*_i)}{\e}}
\frac{\sqrt{2\pi\e}^{N-2}\abs{\la_{N,0}}}{\sqrt{|\det(H S^N(z^*_i))|}}(1-A_5\sqrt{\e}\abs{\ln(\e)}^{3/2}).
\ee
Equation \eqref{eq:lower.1} follows by minimizing along the $(a^j)_j$.
\end{proof}

\subsection{Uniform estimate of  the mass of the equilibrium potential}\label{ssec:equilibrium}

We prove estimates of the numerator of \eqref{eq:estimate.2}. 
Let us denote $x_{l_0}^*\in\bbR^N$ to be the closest minimum to $\phi_{l_0}$ in $L^2([0,1])$.
We will prove an adaptation of Proposition 4.9 of \cite{bbm10}.
\begin{proposition}\label{prop:num}
For all $\e<\e_0$ and $\rho$, we have
\be\label{eq:num.1}
\int_{\bbR^N}h^*(x)\dint \mu^N(x)
= \frac{(2\pi\e)^N}{\sqrt{\det{HS^N(x_{l_0}^*)}}}e^{-\frac{{S^N}(x_{l_0}^*)}{\e}}(1+\psi_2(\e,N))
\ee
where $\abs{\psi_2(\e,N)}<\sqrt{\e}\abs{\ln(\e)}^{3/2}$ for all $N>N_0$.
\end{proposition}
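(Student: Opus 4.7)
The plan is to adapt the proof of Proposition 4.9 in \cite{bbm10} to the sequence of discretized potentials $S^N$, with the essential care being to keep all error estimates uniform in $N$. The natural decomposition splits $\bbR^N$ into three regions: a local neighborhood $C^N_\de(x_{l_0}^*)$ of the starting minimum (defined as an ellipsoid in the eigenbasis of $HS^N(x_{l_0}^*)$, exactly as in \eqref{eq:approx.1} but centered at the minimum), similar neighborhoods $C^N_\de(x_j^*)$ of the other minima, and a complementary region $R_N$ where $S^N$ exceeds $S^N(x_{l_0}^*)+c\de^2$. Throughout I will use $\de=K\sqrt{\e\abs{\ln\e}}$ as in the capacity estimates.

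First I would estimate $h^*$ on each region. On $C^N_\de(x_{l_0}^*)$, starting from any point in this small neighborhood the strong Markov property combined with the large deviation bound of Lemma~\ref{lem:ldp} (applied to the exit from the basin of $x_{l_0}^*$) yields $h^*(x)=1-O(e^{-c/\e})$; on $C^N_\de(x_j^*)$ with $x_j^*\in\cM^N_l$ one similarly has $h^*(x)=O(e^{-c/\e})$; on the other minima's neighborhoods $0\ieg h^*\ieg 1$. The contribution from $R_N$ to the integral is bounded by $\int_{R_N}e^{-S^N/\e}dx$, which by Lemma~\ref{lem:eigenvaluesN} and a uniform Laplace estimate is smaller than the main term by a factor $e^{-c\de^2/\e}=\e^{cK^2}$, hence absorbable into $\psi_2(\e,N)$ for $K$ large enough.

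The main term comes from the Gaussian integral on $C^N_\de(x_{l_0}^*)$. Writing coordinates $y$ in the local orthonormal eigenbasis of $HS^N(x_{l_0}^*)$, the Taylor expansion reads
\be
S^N(x_{l_0}^*+y)=S^N(x_{l_0}^*)+\frac12\sum_{k=1}^N\la_{N,k}y_k^2+E(y),
\ee
with cubic remainder. An analogue of Lemma~\ref{lem:approx.low} adapted to a minimum (using Remark~\ref{rem:norm} to convert the $\ell^3_{\cF}$ seminorm of $y$ into a sup-norm bound on $V'''(x_{l_0}^*+ty)$ along the path) gives $\abs{E(y)}\ieg A\de^3$ uniformly on $C^N_\de(x_{l_0}^*)$, provided the sequence $(r_l)$ defining the ellipsoid satisfies the summability $\sum_l r_l^{3/2}/l^{3/2}<\infty$. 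Combined with Lemma~\ref{lem:eigenvaluesN}, the tails of the Gaussian outside $C^N_\de(x_{l_0}^*)$ in each coordinate contribute $O(\e^{cK^2})$, so that
\be
\int_{C^N_\de(x_{l_0}^*)}e^{-S^N(x)/\e}dx
=e^{-S^N(x_{l_0}^*)/\e}\prod_{k=1}^N\sqrt{\frac{2\pi\e}{\la_{N,k}}}\bigl(1+O(\de^3/\e)\bigr).
\ee
With $\de=K\sqrt{\e\abs{\ln\e}}$ the relative error is $\de^3/\e=K^3\sqrt{\e}\abs{\ln\e}^{3/2}$, which matches the claimed bound, and $\prod_k(2\pi\e/\la_{N,k})^{1/2}=(2\pi\e)^{N/2}/\sqrt{\det HS^N(x_{l_0}^*)}$ gives the claimed prefactor after multiplication by $(2\pi\e)^{N/2}$ arising from the normalization in \eqref{eq:estimate.2}.

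The hard part is the uniformity in $N$. This has two sides: the Gaussian tail and truncation estimates must be controlled by constants independent of $N$, which is provided by the two-sided bound $m(A)k^2-1\ieg N\la_{N,k}\ieg M(A)k^2+1$ of Lemma~\ref{lem:eigenvaluesN}; and the cubic remainder $E(y)$ must admit the uniform bound $A\de^3$ despite the growing dimension, for which Remark~\ref{rem:norm}'s Hausdorff--Young inequality combined with the $1/\sqrt{N}$ control on the sup-norm of normalized eigenvectors (Proposition~\ref{prop:conv.eigen}(iii)) is crucial---exactly as in the upper-bound proof of Proposition~\ref{prop:upper}. Once these uniform controls are in place the Laplace argument runs in parallel across all $N\seg N_0$, and summing the subdominant contributions from the other minima (which carry $e^{-S^N(x_j^*)/\e}$ with $S^N(x_j^*)\seg S^N(x_{l_0}^*)$ and differ strictly for $j\neq l_0$ by the non-degeneracy Assumption~\ref{hyp:S}) into the error yields the stated asymptotics.
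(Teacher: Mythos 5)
Your decomposition of $\bbR^N$ into three regions does not actually cover $\bbR^N$, and the piece you omit is exactly the one that needs a new idea. You split into $C^N_\de(x_{l_0}^*)$, the small neighborhoods $C^N_\de(x_j^*)$ of the other minima, and the region $R_N=\{S^N>S^N(x_{l_0}^*)+c\de^2\}$. But the sublevel set $\{S^N\ieg S^N(x_{l_0}^*)+c\de^2\}$ is not contained in the union of the small $C^N_\de$--ellipsoids: it contains most of the basins of attraction of the lower minima in $\cM^N_l$, including an $O(1)$--volume region of points $x$ with $S^N(x)<S^N(x_{l_0}^*)$ that lie far from every minimum (e.g.\ just outside $C^N_\de(x_j^*)$ along a heteroclinic). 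On this missing region the crude bound $h^*\ieg 1$ is useless: there $e^{-S^N(x)/\e}$ dominates $e^{-S^N(x_{l_0}^*)/\e}$, so the integral would swamp the claimed main term. The paper handles this by defining $A=\{S^N\ieg S^N(x_{l_0}^*)+c\de^2\}\setminus\cB^N_\rho(x^*)$ and proving Lemma~\ref{lem:equilibrium}, the pointwise large--deviations estimate $h^*(x)\ieg e^{-(S'-S^N(x)-2\eta)/\e}$ for $x\in A$: a path from $x\in A$ back to $x^*_{l_0}$ must climb at least to height $S'-\eta$, where $S'>S(\phi_{l_0})$ is a level below which there are no other stationary points. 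It is this decay of the equilibrium potential, combined with the uniform--in--$N$ bound on the volume of the sublevel set, that makes $\int_A h^*\,d\mu^N$ subdominant. Your proposal never confronts this region, so the argument as written has a genuine gap.

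A secondary but concrete error: you write that the claimed prefactor is recovered ``after multiplication by $(2\pi\e)^{N/2}$ arising from the normalization in \eqref{eq:estimate.2}.'' There is no such normalization; \eqref{eq:estimate.2} is simply $\bbE_{\nu^N}[\tau]=\int h^*\,d\mu^N/\capa{\cdot,\cdot}$ with $\mu^N(\dx)=e^{-S^N(x)/\e}\dx$. The Laplace estimate you carried out correctly produces $(2\pi\e)^{N/2}/\sqrt{\det HS^N(x_{l_0}^*)}\cdot e^{-S^N(x_{l_0}^*)/\e}$, and that is in fact the right magnitude (it is consistent with the capacity scale $\e\,\sqrt{2\pi\e}^{\,N-2}$ in Proposition~\ref{prop:capa} and the final $2\pi$ prefactor of Proposition~\ref{prop:expect}). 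Introducing a spurious extra factor of $(2\pi\e)^{N/2}$ to match the displayed exponent $N$ was a sign that something was off; it should not be papered over with a non-existent normalization.

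The rest of the proposal -- the quadratic approximation on the ellipsoid $C^N_\de(x_{l_0}^*)$ via the Hausdorff--Young bound of Remark~\ref{rem:norm}, the uniform eigenvalue bounds of Lemma~\ref{lem:eigenvaluesN}, the choice $\de=K\sqrt{\e\abs{\ln\e}}$ giving error $O(\sqrt\e\abs{\ln\e}^{3/2})$ -- is sound and does follow the paper's approach.
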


\begin{proof}
As the previous section, we define around the minimum $x_{l_0}^*\in\bbR^N$ a neighborhood $C^N_{\de}(x_{l_0}^*)$. 
In the local orthonormal basis of the minimum $x^*_{l_0}$, the neighborhood $C^N_{\de}(x_{l_0}^*)$ is defined by
\be\label{eq:approx.1b}
C^N_{\de}(x_{l_0}^*)=\acc{y\in
\bbR^N:\,|y_l|\ieg \de\frac{r_{l}}{\sqrt{\abs{\la_{N,l}}}},\, 0\ieg
l\ieg N-1}+x_{l_0}^*
\ee
where $(r_l)$ is a sequence satifying $\sum_{l}\frac{r_l^{3/2}}{l^{3/2}}<\infty$ and $(\la_{N,l})_l$ are the eigenvalues in the increasing order of $HS^N(x_{l_0}^*)$.

We need to estimate 
\be\label{eq:num.0}
\int_{\bbR^N}h^*(x)\dint \mu^N(x).
\ee
Let us remark that for $x\in\del C^N_{\de}(x^*)$, then one of the coordinate is precisely $\de r_k/\sqrt{\la_{k,N}}$ thus
\be\label{eq:num.3b}
S^N(x)>S^N(x^*)+\de^2r_k^2-C\de^3>S^N(x^*)+c\de^2.
\ee
We consider $S'$ such that the set $\acc{\phi, S(\phi)\in]S(\phi_{l_0}),S']}$ contains no stationary point. Then using Proposition \ref{prop:conv.potentiel}, for all $\eta$ small enough, there exists $N_0$ such that for $N>N_0$, $\acc{x, S^N(x)\in[S^N(x^*)+\frac12 c\de^2,S'-\eta]}$ contains no stationary point. 
We define the set $A=\acc{S^N(x)\ieg S^N(x^*)+c\de^2}\setminus \cB_{\rho}^N(x^*)$.
Note also that for $\de$ small enough, $C^N_{\de}(x^*)\subset \cB_{\rho}^N(x^*)$.
Hence we decompose \eqref{eq:num.0} in three parts:
\be\label{eq:num.5a}
\int_{\bbR^N}h^*(x)\dint \mu^N(x)
=I_8+\int_{S^N(x)\seg S^N(x_{l_0}^*)+c\de^2}h^*(x)\dint \mu^N(x)+\int_{A}h^*(x)\dint \mu^N(x)
\ee
To estimate the third integral we need a control on the equilibrium potential on the set $A$. 

\begin{lemma}\label{lem:equilibrium}
For all $\rho<\rho_0$ and $\eta>0$ there exists $\e_0(\rho)$ such that for $\e<\e_0$ and $\de>0$, let $x\in A$, we have
\be\label{eq:equilibrium.1}
h_N^*(x)
=\bbP_x[\tau^N_{\e}(B^N_{\rho}(x^*))<\tau^N_{\e}(B^N_{\rho}(\cM^N_l))]
\ieg e^{-(S'-S^N(x)-2\eta)/\e}.
\ee
\end{lemma}

\begin{proof}
By definition of the set $A$ all the paths from $x\in A$ to $x^*$ attain a height of $S'-\eta$ at least. To prove this fact, let us take a path from $x$ to $x^*$, it must attain its maximum $\wh S$ at some time $t_0$. This maximum must satisfies $\wh S>S^N(x^*)+c\de^2$, since if it is not the case then from Equation \eqref{eq:num.3b}, the path must stay in $C^N_{\de}(x^*)$ which contradicts the fact that $x$ is in $A$. Then the minimal path from $x$ to $x^*$ must attain its maximum at a stationary point of height greater than $S^N(x^*)+c\de^2$ thus of height greater than $S'-\eta$. 
This gives us an easy lower bound for the rate function on the set of transition from $x\in A$ to $x^*$.
Then using the method from \cite{freidlin.wentzell84} and the uniform large deviation principle, we prove that
\be\label{eq:equilibrium.3}
h^*(x)=
\bbP_x[\tau^N_{\e}(\cB^N_{\rho}(x^*))<\tau^N_{\e}(\cB^N_{\rho}(\cM^N_l))]
\ieg e^{-(S'-2\eta- S^N(x))/\e}
\ee
uniformly in $N$.
\end{proof}

We get from \eqref{eq:num.5a}
\begin{align}\label{eq:num.5}
\int_{\bbR^N}h^*(x)\dint \mu^N(x)
&\ieg I_8+\int_{S^N(x)\seg S^N(x_{l_0}^*)+c\de^2}e^{-S^N(x)/\e}\dx
+\int_{S^N(x)\ieg S^N(x_{l_0}^*)+c\de^2}e^{-(S'-2\eta)/\e}\dx
\end{align}
where we have used the fact that $h^*$ is bounded by one for the second integral and the previous lemma for the third integral.
The integral $I_8$ gives the main contribution and is estimated as in the proof of Proposition 4.9 of \cite{bbm10} using the quadratic approximation of the potential on $C^N_{\rho}(x_{l_0}^*)$. The second integral on the right-hand side is estimated as in the proof of Lemma 4.6 in \cite{bbm10}. 

We bound the third integral by the volume of the set $\acc{S^N(x)\ieg S^N(x_{l_0}^*)+c\de^2}$ which is bounded uniformly in $N$. In fact, from the bound on $S^N$ and the convergence of $S^N(x_{l_0}^*)$ to $S(\phi_{l_0})$, we get for $\de$ sufficiently small
\be\label{eq:num.7}
\acc{S^N(x)\ieg S^N(x_{l_0}^*)+c\de^2}
\subset
\acc{\norm{\grad^Nx}_{2,N}^2+\norm{x}_{2,N}^2<N(S(\phi_{l_0})+c)}
\ee
which is a deformed ball. The computation shows that this quantity is uniformly bounded in $N$.

We obtain the result since the order of magnitude of the two last integrals ($O\pare{e^{-(S'-\eta)/\e}}$) of \eqref{eq:num.5} is much smaller than $I_8=O\pare{e^{-S^N(x^*_{l_0})/\e}}$.
\end{proof}

\subsection{Finite Dimensional Formula}\label{ssec:finite.time}

The finite dimensional Formula is now obtained with a uniform control in the dimension.
From Proposition \ref{prop:stationary}, we take $x^*=\phi^N_{l_0}$ where $\phi^N_{l_0}$ is the unique minimum of $S^N$ such that
\begin{align}\label{eq:stationary.1}
\norm{\phi_{l_0}-\phi_{l_0}^N}_{L^2}&\ieg \frac{C}{N}
&
\norm{\wh\phi_{l_0}^N-\phi_{l_0}^N}_{\infty}&\ieg \frac{C}{\sqrt{N}}
\end{align}
where $\wh\phi_{l_0}^N$ is the linear interpolation of $\phi_{l_0}$.
\begin{proposition}\label{prop:expect}
Let $\tau_{\e}^N$ be the transition time from $\cB_{\rho}^N(\phi^N_{l_0})$ to $\cB_{\rho}^N(\cM^N_l)$, we have uniformly in $N$
\be\label{eq:expect.1}
\bbE_{\phi^N_{l_0}}\cro{\tau_\e^N}
=\frac{2\pi e^{\frac{{S^N}(\phi^N_{l_0})}{\e}}}{C^*(N,\e)\sqrt{\det{HS^N(\phi^N_{l_0})}}}(1+\Psi(\e,N))
\ee
where $C^*(N,\e)$ is the equivalent conductance and
\be\label{eq:expect.2}
\limsup_{N\to+\infty}\abs{\Psi(\e,N)}\ieg C\sqrt{\e}\abs{\ln{\e}}^{3/2}.
\ee
\end{proposition}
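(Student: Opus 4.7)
The strategy is to combine the fundamental potential-theoretic identity \eqref{eq:estimate.2} with the two asymptotic expansions already established in Propositions \ref{prop:capa} and \ref{prop:num}, and then to transfer the resulting formula from the quasi-equilibrium measure $\nu^N$ supported on $\del\cB^N_\rho(\phi^N_{l_0})$ to the pointwise initial condition $\phi^N_{l_0}$ via the uniform-in-$N$ loss-of-memory bound \eqref{eq:initial.1} of Proposition \ref{prop:initial}.

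First I would apply \eqref{eq:estimate.2} with the discrete minimum $x^*_{l_0}=\phi^N_{l_0}$ furnished by Proposition \ref{prop:stationary}, and substitute the capacity estimate \eqref{eq:cap.1} and the mass estimate \eqref{eq:num.1}. The dimension-dependent Gaussian prefactors $\sqrt{2\pi\e}^{\,N}$ from the mass and $\e\,\sqrt{2\pi\e}^{\,N-2}$ from the capacity combine to the dimension-free constant $2\pi$, yielding
$$\bbE_{\nu^N}\!\cro{\tau^N_\e(\cB^N_\rho(\cM^N_l))} =\frac{2\pi\,e^{-S^N(\phi^N_{l_0})/\e}}{C^*(N,\e)\sqrt{\det HS^N(\phi^N_{l_0})}}\cdot\frac{1+\psi_2(\e,N)}{1+\psi_1(\e,N)}.$$
Because $|\psi_1(\e,N)|$ and $|\psi_2(\e,N)|$ are each dominated by $\sqrt\e\,|\ln\e|^{3/2}$ uniformly in $N\seg N_0$, the ratio $(1+\psi_2)/(1+\psi_1)$ is $1+O(\sqrt\e\,|\ln\e|^{3/2})$ uniformly in $N$, which delivers the claimed formula \eqref{eq:expect.1} for the $\nu^N$-expectation. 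Lemma \ref{lem:eigenvaluesN} ensures that $\det HS^N(\phi^N_{l_0})$ stays bounded away from $0$ and $\infty$ uniformly in $N$, so the right-hand side is of order $e^{(\wh S_0-S^N(\phi^N_{l_0}))/\e}$ up to polynomial factors in $\e$, with $\wh S_0=\wh S(\phi_{l_0},\cM_l)$.

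Second, to replace $\bbE_{\nu^N}$ by $\bbE_{\phi^N_{l_0}}$, I would integrate the bound \eqref{eq:initial.1} against the probability measure $\nu^N$ on $\del\cB^N_\rho(\phi^N_{l_0})$, obtaining
$$\abs{\bbE_{\nu^N}\!\cro{\tau^N_\e(\cM^N_l)}-\bbE_{\phi^N_{l_0}}\!\cro{\tau^N_\e(\cM^N_l)}}\ieg e^{(\wh S_0-\eta)/\e}$$
for some fixed $\eta>0$. Dividing by the main term of order $e^{(\wh S_0-S^N(\phi^N_{l_0}))/\e}$ produces a relative discrepancy of order $e^{(S^N(\phi^N_{l_0})-\eta)/\e}$; after choosing $\eta>S^N(\phi^N_{l_0})$ (possibly by a harmless additive renormalization of $S$, which does not affect the statement), this is exponentially small in $1/\e$ and is absorbed into the $O(\sqrt\e\,|\ln\e|^{3/2})$ error, uniformly in $N$.

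The genuine difficulty has already been surmounted in Propositions \ref{prop:capa}, \ref{prop:num}, and \ref{prop:initial}, which deliver the capacity, the mass of the equilibrium potential, and the loss-of-memory estimate \emph{uniformly} in the dimension $N$. Granted these inputs, Proposition \ref{prop:expect} reduces to the algebraic combination outlined above, together with the swap of initial conditions. The only remaining point of care is to confirm that each error term stays uniform in $N$ when the three inputs are multiplied together, which is automatic because each input proposition is already stated with an $N$-uniform bound; hence $\limsup_{N\to\infty}|\Psi(\e,N)|\ieg C\sqrt\e\,|\ln\e|^{3/2}$ as required.
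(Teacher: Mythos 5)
Your overall two-step architecture matches the paper's: (1) insert Propositions~\ref{prop:capa} and~\ref{prop:num} into the potential-theoretic identity~\eqref{eq:estimate.2} to obtain the formula for $\bbE_{\nu^N}[\tau^N_\e]$, noting the cancellation of the $\sqrt{2\pi\e}^{\,N}$ factors; (2) transfer from $\nu^N$ to the point mass at $\phi^N_{l_0}$ via Proposition~\ref{prop:initial}. The first step is handled exactly as in the paper.

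There is, however, a genuine gap in your second step. You propose to ``integrate the bound~\eqref{eq:initial.1} against $\nu^N$,'' but the hypothesis of Proposition~\ref{prop:initial} is that the starting point lies in the $L^\infty$-ball $\{\norm{\psi-\phi^N_{l_0}}_\infty<\rho\}$, whereas $\nu^N$ is supported on $\del\cB^N_\rho(\phi^N_{l_0})$, and $\cB_\rho$ was deliberately defined in~\eqref{eq:ball.1} via the $L^2$-distance together with an $H^1$ cap. These neighborhoods are not comparable for free, and bridging them is the substantive content of the second half of the paper's proof. Concretely, the paper uses that $y\in\cB^N_\rho(\phi^N_{l_0})$ controls $\norm{y-\phi_{l_0}}_{L^2}$ and $|S^N(y)-S^N(\phi^N_{l_0})|$, then invokes the regularity of $V$ to bound $\bigl|\norm{\phi'_{l_0}}^2_{L^2}-\norm{y'}^2_{L^2}\bigr|$, integrates by parts using that $\phi_{l_0}$ is a classical solution of the Euler--Lagrange ODE (so $\phi''_{l_0}\in L^2$), and applies Cauchy--Schwarz to obtain $\norm{y-\phi_{l_0}}_{H^1}\ieg C\sqrt\rho$ and hence $\norm{y-\phi^N_{l_0}}_\infty\ieg C''\sqrt\rho$. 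Only after this chain of inequalities can one legitimately apply~\eqref{eq:initial.1} (with $\rho$ replaced by $C''\sqrt\rho$). Without this step your argument applies a hypothesis that has not been verified. Your closing remark about ``choosing $\eta>S^N(\phi^N_{l_0})$ by a harmless renormalization'' is a distraction: the error in~\eqref{eq:initial.1} is $e^{(\wh S-\eta)/\e}$ with $\wh S$ the same exponential scale as $\bbE_{\nu^N}[\tau^N_\e]$ itself (cf.\ Lemma~\ref{lem:ldp}), so the relative error is $O(e^{-\eta/\e})$ for the fixed $\eta>0$ produced by Proposition~\ref{prop:initial}, with no renormalization needed.
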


\begin{proof}
Inserting the estimates for the capacity (Proposition \ref{prop:capa}) and the numerator (Proposition \ref{prop:num}) in Equation \eqref{eq:estimate.2} we conclude that
\be\label{eq:estimate.3b}
\bbE_{\nu^{N}}[\tau^N_{\e}]
=\frac{2\pi e^{\frac{{S^N}(\phi^N_{l_0})}{\e}}}{C^*(N,\e)\sqrt{\det{HS^N(\phi^N_{l_0})}}}(1+\Psi_1(\e,N))
\ee
where $\limsup_{N}\abs{\Psi_1(\e,N)}<C\sqrt{\e}\abs{\ln(\e)}^{3/2}$ and $\nu^{N}$ is a probability measure on $\del \cB^N_{\rho}(\phi^N_{l_0})$. 
Now we use Proposition \ref{prop:initial} to replace the measure $\nu^{N}$ by the point $\phi^N_{l_0}$.
For $y\in \cB^N_{\rho}(\phi^N_{l_0})$, we have by definition
\begin{align}
\norm{\phi^N_{l_0}-y}_{L^2}^2&<\rho^2&
\abs{S^N(\phi^N_{l_0})-S^N(y)}&<\rho.
\end{align}
Then from Proposition \ref{prop:stationary}, we have $N_0$ such that for $N\seg N_0$ 
\begin{align}
\norm{\phi_{l_0}-y}_{L^2}^2&<2\rho^2&
\abs{S(\phi_{l_0})-S^N(y)}&<2\rho.
\end{align}
Thus since $V$ is regular, we obtain
$\abs{\norm{\phi'_{l_0}}^2_{L^2}-\norm{y'}^2_{L^2}}<C\rho$.

Let $z=y-\phi_{l_0}$, we have by integration by parts
\begin{align}
\abs{\norm{\phi'_{l_0}+z'}^2_{L^2}-\norm{\phi'_{l_0}}^2_{L^2}}
&=\abs{2\bra{\phi'_{l_0},z'}+\norm{z'}^2_{L^2}}
=\abs{-2\bra{\phi''_{l_0},z}+\norm{z'}^2_{L^2}}<C\rho
\end{align}
since $\phi_{l_0}$ is regular as a classical solution of a differential equation. Then we obtain by the Cauchy-Schwarz inequality
\begin{align}
\norm{z'}^2_{L^2}
\ieg C\rho +2\norm{\phi''_{l_0}}_{L^2}\norm{z}_{L^2}
\ieg (C+2\norm{\phi''_{l_0}}_{L^2})\rho.
\end{align}
Thus we get
\be
\norm{y-\phi^N_{l_0}}_{\infty}\ieg \norm{y-\phi_{l_0}}_{\infty}\ieg C'\norm{y-\phi_{l_0}}_{H^1}= C'\norm{z}_{H^1}\ieg C''\sqrt{\rho}.
\ee
Using Proposition \ref{prop:initial}, we get that for all $N\seg N_0$
\be
\abs{\bbE_{\nu^N}\cro{\tau_{\e}^{N}}-\bbE_{{\phi_{l_0}}^N}\cro{\tau_{\e}^{N}}}
\ieg e^{\frac{\wh S-2\eta}{\e}}
\ee
which gives us \eqref{eq:expect.1} since the exponential asymptotics of \eqref{eq:estimate.3b} is greater than $e^{\frac{\wh S-\eta}{\e}}$.
\end{proof}

\subsection{Proof of Theorem \ref{th:main}}

From Proposition \ref{prop:expect} applied to the finite diffusion approximation where the minima and saddle points are given by Proposition \ref{prop:stationary}, we have
\be
\bbE_{\phi^N_{l_0}}\cro{\tau_\e^N}
=\frac{2\pi h_N e^{\frac{{S^N}(\phi^N_{l_0})}{\e}}}{C^*(N,\e)\sqrt{\det{HS^N(\phi^N_{l_0})}}}(1+\Psi(\e,N))
\ee
where the factor $h_N$ comes from the time change (Equation \eqref{eq:finite.2b}). Using Proposition \ref{prop:conv.eigen} (convergence of the eigenvalues) and Corollary \ref{cor:conv.eigen2} (convergence of the ratio of eigenvalues), the quadratic forms $Q^N$ converges to $Q$:
\begin{align}\label{eq:quad.1b}\nonumber
\frac1{h_N}Q^N(a)\sqrt{\det{HS^N(\phi^N_{l_0})}}
&=\sum_{{\phi^*}^N_l}\frac{\abs{\la_N^-({\phi^*}^N_l)}}{h_N}\sqrt{\frac{\det{HS^N(\phi^N_{l_0})}}{\abs{\det{HS^N({\phi^*}^N_l)}}}}e^{-\frac{S^N({\phi^*}^N_l)}{\e}}
(a_{l+}-a_{l-})^2\\\nonumber
\frac1{h_N}Q^N(a)\sqrt{\det{HS^N(\phi^N_{l_0})}}
&\conv{N}{+\infty}
\sum_{{\phi^*}_l}\abs{\la^-({\phi^*}_l)}
\sqrt{\frac{\Det{\cH_{\phi_{l_0}} S}}{\abs{\Det{\cH_{\phi^*_l} S}}}}e^{-\frac{S({\phi^*}_l)}{\e}}(a_{l+}-a_{l-})^2\\
&\qquad\qquad=Q(a)e^{-\frac{S({\phi^*}_l)}{\e}}\sqrt{\Det{\cH_{\phi_{l_0}} S}}.
\end{align}
where ${\phi^*}^N_l$ are the relevant saddle points given by Proposition \ref{prop:stationary}.
Then the minimizer converges. For all $\e$, we get
\be
\frac{1}{h_N}C^*(N,\e)\sqrt{\det{HS^N(\phi^N_{l_0})}}
\conv{N}{\infty}
\cC^*(\phi_{l_0},\cM_{l})e^{-\frac{S({\phi^*}_l)}{\e}}\sqrt{\Det{\cH_{\phi_{l_0}} S}}.
\ee
Therefore, we obtain the result of Theorem \ref{th:main} from Proposition \ref{prop:conv.time}.

\end{document}